\title[Mappings of finite distortion]{Mappings of finite distortion on metric surfaces}
\author{Damaris Meier}
\address[Damaris Meier]{Department of Mathematics\\ University of Fribourg\\ Chemin du Mus\'ee 23\\ 1700 Fribourg, Switzerland.}
\email{damaris.meier@unifr.ch}
\author{Kai Rajala}
\address[Kai Rajala]{Department of Mathematics and Statistics, University of Jyväskylä, P.O. Box 35 (MaD), 40014 University of Jyväskylä, Finland}
\email{kai.i.rajala@jyu.fi}
\thanks{The first-named author is partially supported by UniFr Doc.Mobility Grant DM-22-10.}
\keywords{}
\subjclass[2020]{30L10 (Primary) 30C65; 30F10 (Secondary)}
\date{\today}
\numberwithin{equation}{section}
\newtheorem{thm}{Theorem}[section]
\newtheorem{prop}[thm]{Proposition}
\newtheorem{lemma}[thm]{Lemma}
\newtheorem{corollary}[thm]{Corollary}
\theoremstyle{remark}
\newtheorem{rmk}[thm]{Remark}
\theoremstyle{definition}
\newcommand{\R}{\mathbb{R}}
\newcommand{\N}{\mathbb{N}}
\newcommand{\D}{\mathbb{D}}
\newcommand{\Nloc}{N^{1,2}_{\text{loc}}}
\newcommand{\Lloc}{L_{\text{loc}}}
\newcommand{\hm}{{\mathcal H}}
\newcommand{\lip}{\mathrm{Lip}}
\newcommand{\dist}{\operatorname{dist}}
\renewcommand{\mod}{\operatorname{Mod}}
\DeclareMathOperator{\id}{id}
\DeclareMathOperator{\md}{md}
\DeclareMathOperator{\ap}{ap}
\DeclareMathOperator{\apmd}{ap\,md}
\begin{document}

\begin{abstract}
We investigate basic properties of \emph{mappings of finite distortion} $f:X \to \R^2$, where $X$ is any \emph{metric surface}, i.e., metric space homeomorphic to a planar domain with locally finite $2$-dimensional Hausdorff measure. We introduce \emph{lower gradients}, which complement the upper gradients of Heinonen and Koskela, to study the distortion of non-homeomorphic maps on metric spaces. 

We extend the Iwaniec-\v{S}ver\'ak theorem to metric surfaces: a non-constant $f:X \to \R^2$ with locally square integrable upper gradient and locally integrable distortion is continuous, open and discrete. We also extend the Hencl-Koskela theorem by showing that if $f$ is moreover injective then $f^{-1}$ is a Sobolev map. 
\end{abstract}

\maketitle

\section{Introduction}
\subsection{Background}
Let $\Omega \subset \R^2$ be a domain. We say that map $f:\Omega \to \R^2$ in the Sobolev space $W^{1,2}_\text{loc}(\Omega,\R^2)$ has \emph{finite distortion} if there is a measurable function $K:\Omega \to [1,\infty)$ so that  
\begin{equation} \label{eq:distortion_ineq}
||Df(x)||^2 \leq K(x) J_f(x) \quad \text{for a.e. } x \in \Omega. 
\end{equation} 
Here $||Df(x)||$ and $J_f(x)$ are the operator norm and determinant of $Df(x)$, respectively. 

If $K(x)=1$ for almost every $x \in \Omega$, then \eqref{eq:distortion_ineq} is valid if and only if $f$ is complex analytic. The basic topological properties of non-constant analytic functions are \emph{continuity}, \emph{openness} and \emph{discreteness} (the preimage of every point is discrete in $\Omega$). 

By \emph{Sto\"ilow factorization} (see \cite{AIM09}*{Chapter 5.5}, \cite{LP20}) non-constant \emph{quasiregular maps}, i.e., maps $f$ satisfying \eqref{eq:distortion_ineq} with constant function $K(x)=K \geq 1$, admit a factorization $f=g\circ h$, where $h$ is a quasiconformal homeomorphism and $g$ is analytic. In particular, every such $f$ is also continuous, open and discrete. 

In \cite{IwaSve93} Iwaniec and \v{S}ver\'ak showed that boundedness of $K(x)$ may be replaced with local integrability. 

\begin{thm}[Iwaniec-\v{S}ver\'ak theorem] \label{thm:IS}
Suppose $f \in W^{1,2}_\text{loc}(\Omega,\R^2)$ is non-constant and satisfies \eqref{eq:distortion_ineq} for some locally integrable $K(x)$. Then $f$ is continuous, open and discrete. 
\end{thm}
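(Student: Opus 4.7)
The plan is to establish a \emph{Sto\"ilow-type factorization} $f = g \circ h$, with $h : \Omega \to \Omega'$ a homeomorphism and $g : \Omega' \to \R^2$ holomorphic. From \eqref{eq:distortion_ineq} one would extract the Beltrami coefficient $\mu(z) := \partial_{\bar z} f(z)/\partial_z f(z)$ (set to $0$ on $\{\partial_z f = 0\}$) and observe that $|\mu(z)| \leq (K(z)-1)/(K(z)+1) < 1$ a.e., so that $f$ satisfies $\partial_{\bar z} f = \mu\,\partial_z f$ a.e. The decisive step is to construct a homeomorphism $h \in W^{1,2}_{\text{loc}}(\Omega,\mathbb{C})$ solving the same Beltrami equation; then $g := f \circ h^{-1}$ will be weakly holomorphic in $W^{1,2}_{\text{loc}}$, hence holomorphic by Weyl's lemma, and the topological conclusions will follow from those for analytic maps composed with a homeomorphism.

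To build $h$, I would truncate by setting $\mu_n := \mu \cdot \chi_{\{|\mu| \leq 1-1/n\}}$ and invoke the measurable Riemann mapping theorem to obtain normalized quasiconformal maps $h_n$ with $\partial_{\bar z} h_n = \mu_n\,\partial_z h_n$. Extracting a homeomorphic subsequential limit $h$ of $\{h_n\}$ requires quantitative control on the modulus: for round annuli $A = A(z_0,r,R)$ with $\overline{A} \subset \Omega$, a map of finite distortion with $K \in L^1_{\text{loc}}$ satisfies an estimate of the form
\[
\mod(h_n(A)) \cdot \log^2(R/r) \leq C \int_A K(z)\,dA(z),
\]
together with a symmetric inequality for $h_n^{-1}$. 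These uniform bounds yield equicontinuity of both $\{h_n\}$ and $\{h_n^{-1}\}$, so an Arzel\`a-Ascoli argument produces the desired homeomorphism, and one can pass the Beltrami equation to the limit using weak $W^{1,2}_{\text{loc}}$-convergence of $h_n$ together with $\mu_n \to \mu$ a.e.

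The principal obstacle is precisely this passage to the limit: since $\mu$ is not bounded away from $1$, classical quasiconformal compactness does not apply, and one must exploit the local integrability of $K$ through the modulus estimate above to rule out collapsing or blowup of the approximating homeomorphisms (injectivity of $h$ is forced by the fact that collapsing $h(z_1)=h(z_2)$ with $z_1 \neq z_2$ would drive the modulus of some thin annulus around $z_1, z_2$ to $0$, contradicting the uniform lower bound provided by the symmetric estimate for $h_n^{-1}$). Once the factorization $f = g \circ h$ is in hand, continuity of $f$ follows from continuity of $g$ and $h$; openness follows from the open mapping theorem for non-constant analytic functions composed with the homeomorphism $h$; and discreteness follows from $f^{-1}(w) = h^{-1}(g^{-1}(w))$ together with discreteness of the fibers of a non-constant holomorphic function.
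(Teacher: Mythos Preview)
The paper does not actually prove Theorem~\ref{thm:IS}; it is quoted as the classical result of Iwaniec and \v{S}ver\'ak with a reference to \cite{IwaSve93}, serving as motivation for the paper's own Theorem~\ref{thm:main}. So there is no ``paper's own proof'' of this statement to compare your proposal against.

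That said, your outline is essentially the original Iwaniec--\v{S}ver\'ak argument: extract the Beltrami coefficient, solve the degenerate Beltrami equation by approximation with the uniformly elliptic problems $\partial_{\bar z}h_n=\mu_n\,\partial_z h_n$, pass to a homeomorphic limit using modulus estimates that depend only on $\int K$, and factor $f=g\circ h$ with $g$ holomorphic. One point to tighten: the displayed inequality you wrote for $\mod(h_n(A))$ has the wrong shape for the equicontinuity step. What one actually uses is a \emph{lower} bound on the conformal modulus of the image ring, of the form
\[
\mod\bigl(h_n(A(z_0,r,R))\bigr)\;\geq\;\frac{c\,\log^2(R/r)}{\int_{A(z_0,r,R)} K_{h_n}\,dA},
\]
together with $K_{h_n}\leq K$ (since $|\mu_n|\leq|\mu|$); this is what forces $h_n(z)\to h_n(z_0)$ uniformly as $z\to z_0$. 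The companion estimate for $h_n^{-1}$ preventing collapse uses the \emph{inverse} distortion, which here is controlled because the $h_n$ are genuinely quasiconformal. You should also say a word about why the $h_n$ stay uniformly in $W^{1,2}_{\text{loc}}$ (normalize so that the images are bounded and use $|Dh_n|^2\leq K J_{h_n}$ together with a bound on the image area), since you need weak $W^{1,2}$-convergence to pass the Beltrami equation to the limit.

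It is worth noting that the paper's proof of its metric-surface generalization (Theorem~\ref{thm:main}) proceeds along a completely different route, since on a general metric surface there is no Beltrami equation and no Sto\"ilow factorization. There the argument reduces openness and discreteness to lightness via the Titus--Young theorem, and establishes lightness through multiplicity estimates (Propositions~\ref{prop:discr-open-nbhd} and~\ref{prop:ineq-multiplicity}) obtained from a coarea inequality, an area inequality involving lower gradients, and value-distribution-type computations. Your Beltrami approach is the right one for the euclidean theorem, but it does not generalize to the setting of the paper.
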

The assumption on $K(x)$ is essentially best possible (see \cite{Bal81} and \cite{HenRaj13}). Since the work of Iwaniec and \v{S}ver\'ak \cite{IwaSve93}, a rich theory of mappings of finite distortion has been developed (see \cite{AIM09}, \cite{HenKos14}), with
applications to PDE, complex dynamics, inverse problems and non-linear elasticity theory, among other fields. 

The theory extends to $W^{1,1}_{\text{loc}}$-maps with exponentially integrable distortion and also to higher dimensions, where continuity, openness and discreteness of quasiregular maps was proved by Reshetnyak already in the 1960s (see \cite{Res67}). Reshetnyak's theorem has been extended to spatial mappings of finite distortion by several authors (see  \cite{VG76}, \cite{ManVil98}, \cite{KaKoMa01}, \cite{IKO01}, \cite{IwaMar01}, \cite{KKMOZ03}, \cite{OZ08}, \cite{Raj10},  \cite{HenRaj13}). 

Partially motivated by works of Heinonen-Rickman \cite{HeiRic02}, Heinonen-Sullivan \cite{HeiSul02} and Heinonen-Keith \cite{HeiKei11} on BLD- and bi-Lipschitz pa\-ra\-met\-ri\-za\-tions of metric spaces, Kirsilä \cite{Kir16} furthermore extended Reshetnyak's theorem to maps $f:X \to \mathbb{R}^n$, where $X$ is a \emph{generalized $n$-manifold} satisfying assumptions such as Ahlfors $n$-regularity and Poincar\'e inequality.  

In this paper we extend the Iwaniec-\v{S}ver\'ak theorem to maps $f:X \to \R^2$, where $X$ is a \emph{metric surface}, i.e., a metric space homeomorphic to a domain in $\R^2$ with locally finite $2$-dimensional Hausdorff measure. The novelty of our results is that we do not impose any additional conditions on $X$. 

Our research is partially inspired by recent advances on the uniformization problem on metric surfaces (see \cite{BonKle02}, \cite{Raj:17}, \cite{Iko:19}, \cite{MW21}, 
\cite{Mei22}, \cite{NR:21}, \cite{NR22}) and the properties of the associated homeomorphisms, such as quasiconformal maps $f:X \to \R^2$. It is desirable to explore the properties of non-homeomorphic maps on metric surfaces. The aim of our paper is to provide the first results in this direction. 


\subsection{Mappings of finite distortion on metric surfaces}\label{sec:Distortiondef}
A (euclidean) \emph{metric surface} $X$ is a metric space homeomorphic to a domain $U \subset \R^2$ and with locally finite $2$-dimensional Hausdorff measure. 
Below, $\mathcal{H}^2$ will always be the reference measure on $X$. 

Let $X$ and $Y$ be metric surfaces. We want to establish what it means for a map $f\colon X\to Y$ to have finite distortion. We first observe that in the euclidean case every mapping of finite distortion is sense-preserving. This follows from inequality \eqref{eq:distortion_ineq} by applying non-negativity of the Jacobian determinant and integration by parts, a method which is not available in our generality. 
We call $f:X \to Y$ \emph{sense-preserving} if for any domain $\Omega$ compactly contained in $X$ so that $f|_{\partial\Omega}$ is continuous it follows that $\deg(y,f,\Omega)\geq1$ for any $y\in f(\Omega)\setminus f(\partial\Omega)$. Here $\deg$ is the local topological degree of $f$ (see \cite{Ric93}*{I.4}). 

We apply the theory of Sobolev spaces based on \emph{upper gradients} (\cite{HKST:15}). A Borel function $\rho^u:X \to [0,\infty]$ is an \emph{upper gradient} of $f:X \to Y$, if 
\begin{equation}\label{eq:upp}
d_Y(f(x),f(y)) \leq \int_{\gamma} \rho^u \, ds 
\end{equation}
for all $x,y\in X$ and every rectifiable curve $\gamma$ in $X$ joining $x$ and $y$. We say that $f$ belongs to the Sobolev space 
$N^{1,2}_{\text{loc}}(X,Y)$ if $f$ has an upper gradient $\rho^u \in L^2_{\text{loc}}(X)$ and if $d_Y(y,f(\cdot)) \in L^2_{\text{loc}}(X)$ for some $y \in Y$ (see Section \ref{section:Sobolev}). 

It follows from the proof of \cite{EIR22}*{Theorem 1.4} that a sense-preserving map $f \in N^{1,2}_{\text{loc}}(X,\R^2)$ is continuous (see Remark \ref{rem:sense}). Such an $f$ also satisfies Lusin's Condition $(N)$: if $E \subset X$ and $\mathcal{H}^2(E)=0$, then 
$|f(E)|_2=0$ (see Remark \ref{rmk:NN}). The converse implication does not hold (\cite{Raj:17}*{Section 17}). 

In order to define the distortion of $f$, we introduce \emph{lower gradients}: a Borel function $\rho^l:X \to [0,\infty]$ is a \emph{lower gradient} of $f\in\Nloc(X,Y)$, if $\rho^l\leq\rho_f^u$ almost everywhere and
\begin{equation} \label{eq:low} 
\ell(f\circ \gamma) \geq \int_{\gamma} \rho^l \, ds 
\end{equation}
for every rectifiable curve $\gamma$ in $X$ with $f\circ\gamma$ being continuous. 
Our definition is motivated by the observation that the upper gradient inequality \eqref{eq:upp} is equivalent to the reverse inequality of \eqref{eq:low} for $\rho^u$ (see Section \ref{section:Sobolev}). Every $f \in N^{1,2}_{\text{loc}}(X,Y)$ has an essentially unique \emph{minimal weak upper gradient} $\rho_f^u$ (see Section \ref{section:Sobolev}). Similarly, we prove in Section \ref{section:lower-gradient} that every such $f$ has an essentially unique \emph{maximal weak lower gradient} $\rho_f^l$. 

We say that a sense-preserving $f\in\Nloc(X,Y)$ has \emph{finite distortion (along paths) and denote $f \in \operatorname{FDP}(X,Y)$}, if there is a measurable $K\colon X\to[1,\infty)$ such that 
\begin{equation} \label{eq:wait} 
\rho_f^u(x)\leq K(x)\cdot \rho_f^l(x) \quad \text{for almost every } x\in X.
\end{equation}
The \emph{distortion} $K_f$ of $f$ is   
$$K_f(x):=\begin{cases}
\frac{\rho_f^u(x)}{\rho_f^l(x)}, & \text{if } \rho_f^l(x)\neq0,\\
1, & \text{if } \rho_f^l(x)=0.
\end{cases} $$

Our main result is the following extension of the Iwaniec-\v{S}ver\'ak theorem. Here $X$ is any metric surface. 

\begin{thm}\label{thm:main}
Let $f \in \operatorname{FDP}(X,\R^2)$ be non-constant with $K_f\in\Lloc^1(X)$. Then $f$ is open and discrete. 
\end{thm}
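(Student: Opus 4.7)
My plan is to reduce Theorem~\ref{thm:main} to the classical Iwaniec-\v{S}ver\'ak Theorem~\ref{thm:IS} by precomposing $f$ with a weakly quasiconformal parametrization of $X$. By the metric uniformization results for arbitrary metric surfaces (see \cite{NR:21}, \cite{MW21}), there exists a homeomorphism $u\colon \Omega \to X$ from a planar domain $\Omega \subset \R^2$ which lies in $N^{1,2}_{\text{loc}}(\Omega,X)$ and satisfies a one-sided modulus inequality $\mod(u\Gamma) \leq K \mod(\Gamma)$ for a universal constant $K$. Setting $g \coloneqq f\circ u\colon \Omega \to \R^2$, if one can show that $g$ satisfies the hypotheses of Theorem~\ref{thm:IS}, then openness and discreteness of $g$ follow from that theorem and transfer to $f = g\circ u^{-1}$ since $u$ is a homeomorphism.

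\textbf{Verifying the hypotheses for $g$.} The standard chain rule for upper gradients yields $\rho_g^u \leq (\rho_f^u\circ u)\cdot\rho_u^u$ almost everywhere, so membership $g \in W^{1,2}_{\text{loc}}(\Omega,\R^2)$ follows from $\rho_u^u \in L^2_{\text{loc}}(\Omega)$ together with a change-of-variables argument. Sense-preservation of $g$ is inherited from $f$ through the orientation-preserving homeomorphism $u$ (after possibly reorienting). The serious task is to control the Euclidean distortion of $g$. For this I would establish a dual chain rule for the maximal weak lower gradient of the form $\rho_g^l \geq c\,(\rho_f^l\circ u)\cdot\rho_u^l$ almost everywhere, and then observe that weak quasiconformality of $u$ forces $\rho_u^u \leq K'\rho_u^l$ almost everywhere, i.e., $u$ itself has bounded distortion in the sense of \eqref{eq:wait}. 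Combining the two estimates,
\begin{equation*}
K_g(x) \;=\; \frac{\rho_g^u(x)}{\rho_g^l(x)} \;\leq\; K'\cdot (K_f\circ u)(x) \qquad \text{for a.e.}\ x\in\Omega,
\end{equation*}
and an area inequality for $u$ translates local integrability of $K_f$ on $X$ into local integrability of $K_g$ on $\Omega$.

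\textbf{Main obstacle.} The heart of the matter is the chain rule and area inequality for the maximal weak lower gradient of the parametrization $u$. One needs to know that $u$ does not collapse positive-modulus curve families to modulus-zero families (so that $\rho_u^l$ is genuinely positive where $\rho_u^u$ is), that $\rho_u^l$ is comparable to a Jacobian-type density of $u$, and that weak quasiconformality of $u$ upgrades into pointwise comparability of the two gradients. These are precisely the structural properties of lower gradients that the paper develops in its preparatory sections; they are the real content behind the reduction. Once the distortion estimate above is in place, an application of Theorem~\ref{thm:IS} to $g$ and transport of its conclusions through the homeomorphism $u$ finishes the proof. Continuity of $f$ (not claimed in Theorem~\ref{thm:main} but useful if invoked) comes for free from sense-preservation together with the Esmayli-Ikonen-Rajala result \cite{EIR22} recalled in the introduction.
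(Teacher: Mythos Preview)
Your reduction has a genuine gap, and the paper in fact contains a counterexample to it. The weakly quasiconformal parametrization $u\colon U\to X$ furnished by \cite{NR:21}, \cite{NR22}, \cite{MW21} is in general only \emph{cell-like}, not a homeomorphism; it is a homeomorphism precisely when $X$ is reciprocal, and Proposition~\ref{prop:X-not-reciprocal} constructs a non-reciprocal $X$ admitting a homeomorphism $f\colon X\to\R^2$ with $K_f\in L^1_{\mathrm{loc}}(X)$. In that example the composition $g=f\circ u$ is essentially Ball's map $f_0$, which is \emph{not} discrete and has $K_g\notin L^1_{\mathrm{loc}}$. So the very map you want to feed into Theorem~\ref{thm:IS} fails its hypotheses.

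The failure is exactly at your last step, the ``area inequality for $u$ translates local integrability of $K_f$ on $X$ into local integrability of $K_g$ on $\Omega$.'' The pointwise chain rule $K_g\le C\,K_f\circ u$ is fine (indeed $L_u\le\sqrt{2}\,l_u$ for the isotropic parametrization), but the change of variables reads $\int_U (K_f\circ u)\,J(\apmd u_z)\,dz\le\int_X K_f\,d\hm$, and you need the integral \emph{without} the Jacobian weight. Since $J(\apmd u_z)$ degenerates on the set where $u$ collapses continua, there is no bound on $\int_U K_f\circ u\,dz$; in the Ball example this integral is $\int_{(-1,1)^2}|x|^{-1}\,dx\,dy=\infty$. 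The paper therefore does not reduce to Theorem~\ref{thm:IS} at all: it proves lightness of $f$ directly on $X$ via a weighted modulus argument (Proposition~\ref{prop:discr-open-nbhd}) and a multiplicity estimate of value-distribution type (Proposition~\ref{prop:ineq-multiplicity}), both powered by the coarea inequality (Theorem~\ref{thm:coarea-Sobolev}) and the area inequality (Theorem~\ref{thm:area-ineq}). The parametrization $u$ is used only as an auxiliary tool inside those inequalities, never to transport the problem wholesale to the plane.
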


Generalizing the euclidean result by Hencl-Koskela (who assumed $W^{1,1}$-regularity, see \cite{HenKos06}), we show that if $f$ is furthermore a homeomorphism, then the inverse is also a Sobolev map. 

\begin{thm}\label{thm:inverse-Sobolev}
     Let $f \in \operatorname{FDP}(X,\R^2)$ be injective with $K_f\in\Lloc^1(X)$. Then $f^{-1}\in\Nloc(f(X),X)$. 
\end{thm}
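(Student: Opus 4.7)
The plan is to build a concrete candidate upper gradient for $f^{-1}$ out of the maximal weak lower gradient of $f$, show it lies in $L^2_{\text{loc}}$ via the area formula, and verify the upper gradient inequality through the lower gradient inequality for $f$.

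Since $f$ is injective it is non-constant, so Theorem \ref{thm:main} yields that $f$ is continuous, open and discrete. Together with injectivity this makes $f\colon X \to f(X)$ a homeomorphism onto a planar domain. For the upper gradient of $f^{-1}$ I would propose
\[
g(y) := \begin{cases} 1/\rho_f^l(f^{-1}(y)), & \rho_f^l(f^{-1}(y))>0, \\ 0, & \text{otherwise,} \end{cases}
\]
motivated by the Euclidean identity $\lVert Df^{-1}(y)\rVert = 1/\ell(Df(f^{-1}(y)))$, where $\ell$ denotes the minimal directional stretch.

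For the $L^2_{\text{loc}}$-bound, Lusin's condition $(N)$ (Remark \ref{rmk:NN}) combined with continuity, openness and discreteness of $f$ is exactly what is needed to deduce the area formula $\int_{f^{-1}(V)} (u\circ f)\, J_f\, d\hm = \int_V u\, dy$ for non-negative Borel $u$ and relatively compact $V \subset f(X)$. The expected pointwise bound $J_f \leq \rho_f^u \cdot \rho_f^l$ (the metric analog of $|\det Df| = \lVert Df\rVert\cdot\ell(Df)$), together with the distortion estimate \eqref{eq:wait}, then gives
\[
\int_V g^2 \, dy \;=\; \int_{f^{-1}(V)} \frac{J_f}{(\rho_f^l)^2}\, d\hm \;\leq\; \int_{f^{-1}(V)} K_f \, d\hm \;<\; \infty.
\]

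To verify that $g$ is a weak upper gradient of $f^{-1}$ I would show that for $\mod_2$-almost every rectifiable curve $\gamma$ in $f(X)$,
\[
d_X(f^{-1}(\gamma(a)), f^{-1}(\gamma(b))) \leq \int_\gamma g \, ds.
\]
Fuglede's lemma takes care of the family $\{\int_\gamma g = \infty\}$. The remaining exceptional curves are those $\gamma$ for which $\beta := f^{-1}\circ\gamma$ fails to be rectifiable, or for which the lower gradient inequality \eqref{eq:low} fails on subcurves of $\beta$. On the $X$-side, both classes form $\mod_2$-null families, and I would transfer these to $\mod_2$-null families in $f(X)$ by a Poletsky-type inequality: if $\rho$ is admissible for a curve family $\mathcal A$ in $X$, then the lower gradient inequality shows that $\rho'(y) := \rho(f^{-1}(y))/\rho_f^l(f^{-1}(y))$ is admissible for $f\mathcal A$, and the area formula yields
\[
\mod_2(f\mathcal A) \;\leq\; \int_{f(X)} (\rho')^2\, dy \;\leq\; \int_X K_f\, \rho^2\, d\hm.
\]
A local exhaustion of $X$ through level sets $\{K_f \leq N\}$ should convert this weighted bound into the desired transfer of $\mod_2$-null families, using local integrability of $K_f$.

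For $\gamma$ outside the exceptional family, parametrize $\beta$ by arc length on $[0, \ell(\beta)]$; then $\gamma = f\circ\beta$ has metric speed $|\gamma'|(t) \geq \rho_f^l(\beta(t))$ almost everywhere (by the lower gradient inequality applied to subcurves), so that
\[
\int_\gamma g\, ds \;=\; \int_0^{\ell(\beta)} \frac{|\gamma'|(t)}{\rho_f^l(\beta(t))}\, dt \;\geq\; \ell(\beta) \;\geq\; d_X(\beta(0), \beta(\ell(\beta))).
\]

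The hardest step is the modulus transfer. In particular, handling the fact that $K_f$ is only locally $L^1$ (not $L^\infty$) forces a careful exhaustion in level sets of $K_f$, and one must also check that curves spending positive length in $\{K_f = \infty\}$ form a $\mod_2$-null family in $f(X)$, which should follow from the distortion machinery developed for Theorem \ref{thm:main}.
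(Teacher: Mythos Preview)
Your candidate upper gradient $g=\psi=1/(\rho_f^l\circ f^{-1})$ and the $L^2_{\text{loc}}$ bound match the paper's Lemma~\ref{lem:psi-L2}. However, your route to the $L^2$ estimate via an inequality $J_f\leq \rho_f^u\cdot\rho_f^l$ is not justified: Lemma~\ref{lemma:min-max-stretch} gives this only for maps out of \emph{euclidean} domains, and Theorem~\ref{thm:area-ineq} goes the other way. The paper instead pushes everything through the parametrization $u$ and works with $h=f\circ u$ on $U\subset\R^2$.

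The serious gap is in the modulus transfer. Your Poletsky estimate requires, for each $\gamma$ in $f\mathcal A$, that $\beta=f^{-1}\circ\gamma$ be rectifiable and that Corollary~\ref{cor:gchange} apply to $\beta$; but rectifiability of $f^{-1}\circ\gamma$ for almost every $\gamma$ is precisely absolute continuity of $f^{-1}$ along almost every curve, which is part of what you are proving. Even granting this, the inequality you obtain is $\mod(f\mathcal A)\leq \int_X K_f\,\rho^2\,d\hm$, a $K_f$-\emph{weighted} modulus bound. With $K_f$ merely in $L^1_{\text{loc}}$, this does not send a $\mod$-null family to a $\mod$-null family: the weighted infimum can be positive while the unweighted one is zero. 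Your proposed exhaustion by $\{K_f\leq N\}$ does not help, since the curves in $\mathcal A$ need not respect these level sets, and restricting to subcurves destroys admissibility.

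The paper sidesteps both problems by abandoning the upper-gradient-on-curves verification in favour of the ACL characterization on the euclidean target: for almost every horizontal line $\{s=s_0\}$ one bounds $|\phi(t_2,s_0)-\phi(t_1,s_0)|$ by first applying the Sobolev coarea inequality (Theorem~\ref{thm:coarea-Sobolev}) to $\pi_2\circ f$ to control $\mathcal H^1$ of the level sets $\phi(\{s=s\})$, and then the area inequality (Theorem~\ref{thm:area-ineq}) to convert the resulting integral over thin strips in $X$ into $\int\psi$ over thin strips in $\Omega'$; Lebesgue differentiation in $s$ finishes. No rectifiability of preimage curves and no modulus transfer is needed.
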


Examples in \cite{Bal81} ($f_0$ in Proposition \ref{prop:X-not-reciprocal} below, see also  \cite{HenRaj13}) and \cite{HenKos06}*{Example 1.4}, respectively, show that condition $K_f \in \Lloc^1(X)$ is sharp both in Theorem \ref{thm:main} and in Theorem \ref{thm:inverse-Sobolev}, even if $X=\R^2$. 

We show in Section \ref{section:reciprocal} that there are metric surfaces $X$ which do not admit any quasiconformal maps $h:X \to \R^2$ but do admit maps $f:X \to \R^2$ satisfying the assumptions of Theorem \ref{thm:main}. By \cite{MR24}*{Theorem 1.3}, such surfaces do not exist if we require $K_f$ to be bounded instead of integrable. 

Previous approaches to distortion of maps between metric spaces are mostly based on the \emph{analytic definition}: 
We say that a sense-preserving $f\in\Nloc(X,Y)$ has \emph{finite analytic distortion} and denote $f\in\operatorname{FDA}(X,Y)$, if there is a measurable $C\colon X\to[1,\infty)$ such that 
\begin{equation}\label{ineq:analytic-distortion}
\rho^u_f(x)^2 \leq C(x)\cdot J_f(x) \quad \text{for almost every } x\in X,
\end{equation} 
where 
$$
\label{area_limsup}
J_f(x)=\limsup_{r \to 0} \frac{\hm^2_Y(f(\overline{B}(x,r)))}{\pi r^2}. 
$$

Inequality \eqref{ineq:analytic-distortion} is equivalent to \eqref{eq:wait} in the euclidean setting, and also provides a rich theory for homeomorphisms between metric spaces. However, unlike our approach based on lower gradients, the analytic approach is not convenient for treating non-homeomorphic maps between metric surfaces. We nevertheless prove the following in \cite{MR24}. 

\begin{thm}[\cite{MR24}*{Theorem 1.1}]\label{thm:mr24}
    If $f \in \operatorname{FDA}(X,\R^2)$, then $f \in \operatorname{FDP}(X,\R^2)$. Moreover, for every $C(x)$ in \eqref{ineq:analytic-distortion} we have   
        \begin{align*} 
            K_f(x) \leq 4\sqrt{2}\, C(x) \quad \text{for almost every } x\in X.
        \end{align*}
\end{thm} 
Theorem \ref{thm:main} can be applied to prove the converse of Theorem \ref{thm:mr24} assuming $K_f \in \Lloc^1(X,\R^2)$, see \cite{MR24}. Combining Theorems \ref{thm:main}, \ref{thm:inverse-Sobolev} and \ref{thm:mr24} shows that our main results hold under the analytic assumption. 
\begin{corollary}\label{cor:fda}
 Let $f \in \operatorname{FDA}(X,\R^2)$ be non-constant with $C(x) \in\Lloc^1(X)$. Then $f$ is open and discrete. If $f$ is injective, then $f^{-1}\in\Nloc(f(X),X)$.  
\end{corollary}

The definition of a metric surface can be relaxed by requiring $X$ to be homeomorphic to an oriented topological surface $M$ instead of a domain in $\R^2$. Our definitions and results are local and remain valid under the relaxed definition. We state them only for euclidean metric surfaces to simplify the presentation.

This paper is organized as follows. In Section \ref{section:prel} we recall the background on Analysis in metric spaces needed to prove our main results. In Section \ref{section:area_ineq} we prove an area inequality for maps on the rectifiable part of a metric surface which involves lower gradients and may be of independent interest. We prove Theorems \ref{thm:main} and \ref{thm:inverse-Sobolev} in Sections \ref{sec:discr} and \ref{sec:regul}, respectively. 

The proofs are based on three main tools: the coarea inequality for Sobolev functions on metric surfaces by Meier-Ntalampekos \cite{MN23} and Esmayli-Ikonen-Rajala \cite{EIR22}, weakly quasiconformal parametrizations of metric surfaces by Ntalampekos-Romney \cite{NR22}, \cite{NR:21} and Meier-Wenger \cite{MW21}, and the area inequality proved in Section \ref{section:area_ineq}. In addition, to prove Theorem \ref{thm:main} we apply estimates inspired by the value distribution theory of quasiregular mappings (see \cite{Ric93}). 

In Section \ref{section:reciprocal}, we discuss connections between our results and the uniformization problem on metric surfaces, as well as different definitions of mappings with controlled distortion. Finally, in Section \ref{section:lower-gradient} we prove the existence of maximal weak lower gradients.


\section{Preliminaries} \label{section:prel}

\subsection{Basic definitions and notations}

Let $(X,d)$ be a metric space. We denote the \emph{open} and \emph{closed ball} in $X$ of radius $r>0$ centered at a point $x\in X$ by $B(x,r)$ and $\overline{B}(x,r)$, respectively. When $X=\R^2$ we use notation $\D(x,r)$ instead of $B(x,r)$. 

A set $\Omega\subset X$ homeomorphic to the unit disc $\D(0,1)$ is a \emph{Jordan domain} in $X$ if its boundary $\partial\Omega\subset X$ is a \emph{Jordan curve} in $X$, i.e., a subset of $X$ homeomorphic to $\mathbb{S}^1$. The \emph{image} of a curve $\gamma$ in $X$ is indicated by $|\gamma|$ and the 
\emph{length} by $\ell(\gamma)$. 

A curve $\gamma$ is \emph{rectifiable} if $\ell(\gamma)<\infty$ and \emph{locally rectifiable} if each of its compact subcurves is rectifiable. Moreover, a curve $\gamma\colon[a,b]\to X$ is \emph{geodesic} if $\ell(\gamma)=d(\gamma(a),\gamma(b))$. A curve $\gamma\colon[0,\ell(\gamma)]\to X$ is \emph{parametrized by arclength} if $\ell(\gamma|_I)=|I|_1$ for every interval $I\subset[0,\ell(\gamma)]$. Here, $|\cdot|_n$ denotes the \emph{$n$-dimensional Lebesgue measure}.

For $s\geq 0$, we denote the \emph{$s$-dimensional Hausdorff measure} of $A\subset X$ by $\mathcal{H}^s(A)$. The normalizing constant is chosen so that $|V|_n=\mathcal{H}^n(V)$ for open subsets $V$ of $\R^n$.

We equip $X$ with $\mathcal{H}^2$. Let $L^p(X)$ ($L^p_{\text{loc}}(X)$) denote the space of $p$-integrable (locally $p$-integrable) Borel functions from $X$ to $\R\cup\{-\infty,\infty\}$. Here locally $p$-integrable means $p$-integrable on compact subsets. We say that a subdomain $G$ of $X$ is \emph{compactly contained} in $X$ if the closure $\overline{G}$ is compact.



\subsection{Modulus}\label{section:modulus}
Let $X$ be a metric space and $\Gamma$ be a family of curves in $X$. A Borel function $g\colon X \to [0,\infty]$ is \emph{admissible} for $\Gamma$ if $\int_{\gamma}g\, ds\geq 1$ for all locally rectifiable curves $\gamma\in \Gamma$. We define the ($2$-)\emph{modulus} of $\Gamma$ as 
$$\mod \Gamma = \inf_g \int_X g^2 \, d\mathcal H^2,$$
where the infimum is taken over all admissible functions $g$ for $\Gamma$. If there are no admissible functions for $\Gamma$ we set $\mod \Gamma = \infty$. A property is said to hold for \emph{almost every} curve in $\Gamma$ if it holds for every curve in $\Gamma\setminus\Gamma_0$ for some family $\Gamma_0\subset \Gamma$ with $\mod(\Gamma_0)=0$. In the definition of $\mod(\Gamma)$, the infimum can equivalently be taken over all \emph{weakly admissible} functions, i.e.,\ Borel functions $g\colon X\to [0,\infty]$ such that $\int_\gamma g\geq 1$ for almost every locally rectifiable curve $\gamma\in\Gamma$.

\subsection{Metric Sobolev spaces}\label{section:Sobolev}
Let $f\colon X\to Y$ be a map between metric spaces. A Borel function $\rho^u \colon X\to [0,\infty]$ is an \textit{upper gradient} of $f$ if 
\begin{align}\label{ineq:upper_gradient}
    d_Y(f(x),f(y)) \leq \int_{\gamma} \rho^u \, ds
\end{align}
for all $x,y\in X$ and every rectifiable curve $\gamma$ in $X$ joining $x$ and $y$. If the \textit{upper gradient inequality} \eqref{ineq:upper_gradient} holds for almost every rectifiable curve $\gamma$ in $X$ joining $x$ and $y$ we call $\rho^u$ \emph{weak upper gradient} of $f$. 

The Sobolev space $N^{1,2}(X,Y)$ is the space of Borel maps $f \colon X \to Y$ with upper gradient $\rho^u \in L^2(X)$ such that $x \mapsto d_Y(y,f(x))$ is in $L^2(X)$ for some and thus any $y \in Y$. The space $\Nloc(X, Y)$ is defined in the obvious manner. 

Each $f\in \Nloc(X,Y)$ has a \textit{minimal} weak upper gradient $\rho_f^u$, i.e.,\ for any other weak upper gradient $\rho^u$ we have $\rho_f^u\leq \rho^u$ almost everywhere. Moreover, $\rho^u_f$ is unique up to a set of measure zero. See monograph \cite{HKST:15} for more background on metric Sobolev spaces.

We apply a notion of ``minimal stretching'' which compliments the ``maximal stretching'' represented by upper gradients. To motivate the definition, notice that for continuous maps $f \in \Nloc(X,Y)$ the upper gradient inequality \eqref{ineq:upper_gradient} is equivalent to 
\[
 \ell(f\circ\gamma)\leq\int_{\gamma}\rho^u \, ds  
\qquad
\] 
for almost every rectifiable curve $\gamma$ in $X$.
 We call a Borel function $\rho^l\colon X\to[0,\infty]$ a \emph{lower gradient} of $f\in\Nloc(X,Y)$, if $\rho^l\leq\rho_f^u$ almost everywhere and 
\begin{align}\label{ineq:lower_gradient}
    \ell(f\circ\gamma)\geq\int_{\gamma}\rho^l \, ds 
\end{align}
for every rectifiable curve $\gamma$ in $X$ with $f\circ\gamma$ being continuous. 
If the \textit{lower gradient inequality} \eqref{ineq:lower_gradient} holds for almost every rectifiable $\gamma$, we call $\rho^l$ \emph{weak lower gradient} of $f$. Note that $0$ is always a lower gradient. 

Each $f\in \Nloc(X,Y)$ has a \emph{maximal weak lower gradient} $\rho_f^l$, i.e., for any other weak lower gradient $\rho^l$ we have $\rho_f^l\geq\rho^l$ almost everywhere. Moreover, $\rho^l_f$ is unique up to a set of measure zero. The proof is analogous to the existence of minimal weak upper gradients, see \cite{HKST:15}*{Theorem 6.3.20}. For completeness, we provide a proof in Section \ref{section:lower-gradient}.


\subsection{Coarea inequality on metric surfaces}
We  state the following coarea inequality for Lipschitz functions, which is a consequence of \cite{EH21}*{Theorem 1.1} (see \cite{EIR22}*{Section 5}). Here, $\lip(u)$ denotes the pointwise Lipschitz constant of a Lipschitz function $u\colon X\to \R$, defined by $$\lip(u)(x)=\limsup_{x \neq y\to x}\frac{|u(y)-u(x)|}{d(x,y)}. $$
\begin{thm}[Lipschitz coarea inequality]\label{thm:coarea-Lipschitz}
    Let $X$ be a metric space and $u\colon X\to \R$ a Lipschitz function. Then 
    \begin{align*}
        \int\displaylimits^*_\R \int_{u^{-1}(t)} g\, d\mathcal H^1dt \leq \frac{4}{\pi} \int_X g\cdot \lip(u)   \, d\mathcal H^2  
    \end{align*} 
for every Borel measurable $g:X \to [0,\infty]$. 
\end{thm}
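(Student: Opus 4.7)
The plan is to deduce the inequality from the set-indexed form in Esmayli--Hajłasz \cite{EH21}, which gives the case $g=\chi_E$: for every Borel set $E\subset X$,
\begin{equation*}
\int^*_{\R}\mathcal H^1(E\cap u^{-1}(t))\,dt\;\leq\;\frac{4}{\pi}\int_E \lip(u)\,d\mathcal H^2.
\end{equation*}
The target statement is then obtained by the standard two-step approximation of a non-negative Borel function by simple functions and then monotone limits, the only subtlety being that the inner slice integral $t\mapsto\int_{u^{-1}(t)} g\,d\mathcal H^1$ need not be Borel measurable, which is exactly why the upper integral $\int^*$ appears.

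First I would treat simple functions. For $g=\sum_{i=1}^{n} c_i\chi_{E_i}$ with $c_i\geq 0$ and $E_i$ Borel, the inner integral equals $\sum c_i\mathcal H^1(E_i\cap u^{-1}(t))$; using subadditivity of the upper integral $\int^*(\sum h_i)\leq\sum\int^* h_i$ and then the case of characteristic functions term-by-term gives
\begin{equation*}
\int^*_{\R}\int_{u^{-1}(t)} g\,d\mathcal H^1\,dt\;\leq\;\sum_{i=1}^n c_i\int^*_{\R}\mathcal H^1(E_i\cap u^{-1}(t))\,dt\;\leq\;\frac{4}{\pi}\int_X g\cdot\lip(u)\,d\mathcal H^2.
\end{equation*}

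Next, for general Borel $g:X\to[0,\infty]$, I would pick an increasing sequence of non-negative Borel simple functions $g_k\nearrow g$. For each fixed $t$, the classical monotone convergence theorem on the metric measure space $(u^{-1}(t),\mathcal H^1)$ yields $\int_{u^{-1}(t)} g_k\,d\mathcal H^1\nearrow\int_{u^{-1}(t)} g\,d\mathcal H^1$ as $k\to\infty$. Applying the simple-function case to each $g_k$ gives the inequality for $g_k$, and the right-hand side passes to the limit by the ordinary monotone convergence theorem (the integrand $g_k\cdot\lip(u)$ is Borel).

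The main obstacle is to pass to the limit on the left. The required fact is a monotone convergence property for the upper integral: if $0\leq h_k\nearrow h$ on $\R$, then $\int^*_{\R} h_k\,dt\nearrow\int^*_{\R} h\,dt$. This is classical and can be justified by choosing, for each $k$, a Borel envelope $\widetilde h_k\geq h_k$ with $\int\widetilde h_k=\int^* h_k$, replacing $\widetilde h_k$ by $\min_{j\leq k}\widetilde h_j$ (or a similar construction) to make the envelopes increasing in $k$ while preserving their integrals up to a null adjustment, and then invoking the ordinary monotone convergence theorem on $\lim_k \widetilde h_k$, which is a Borel majorant of $h$. Combining this with the inequality for each $g_k$ yields
\begin{equation*}
\int^*_{\R}\int_{u^{-1}(t)} g\,d\mathcal H^1\,dt\;=\;\lim_{k\to\infty}\int^*_{\R}\int_{u^{-1}(t)} g_k\,d\mathcal H^1\,dt\;\leq\;\frac{4}{\pi}\int_X g\cdot\lip(u)\,d\mathcal H^2,
\end{equation*}
which is the claimed coarea inequality.
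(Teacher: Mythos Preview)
Your proposal is correct and matches the paper's approach: the paper does not prove Theorem~\ref{thm:coarea-Lipschitz} directly but states that it is a consequence of \cite{EH21}*{Theorem 1.1} (the set version you quote), referring to \cite{EIR22}*{Section 5} for the passage to general Borel $g$, which is precisely the simple-function and monotone-limit argument you outline. One small remark: in your envelope construction for monotone convergence of the upper integral, you want $\inf_{j\geq k}\widetilde h_j$ rather than $\min_{j\leq k}\widetilde h_j$ to obtain an increasing sequence of measurable majorants of $h_k$, but this is a cosmetic slip and the argument is standard.
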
 
Here $\int^*$ denotes the upper integral, which is equal to Lebesgue integral for measurable functions.
An important tool throughout this work will be the following coarea inequality for continuous Sobolev functions on metric surfaces.

\begin{thm}[Sobolev coarea inequality, \cite{MN23}*{Theorem 1.6}]\label{thm:coarea-Sobolev}
    Let $X$ be a metric surface and $v\colon X\to \R$ be a continuous function in $\Nloc(X)$.  
    \begin{enumerate}[label=\normalfont(\arabic*)]
        \item If $\mathcal A_v$ denotes the union of all non-degenerate components of the level sets $v^{-1}(t)$, $t\in \R$, of $v$, then $\mathcal A_v$ is a Borel set. \label{ca:i}
        \item For every Borel function $g\colon X\to [0,\infty]$ we have
        $$\int\displaylimits^* \int_{v^{-1}(t)\cap \mathcal A_v}g\, d\mathcal H^1\, dt \leq \frac{4}{\pi} \int g\cdot\rho_v^u\, d\mathcal H^2.$$\label{ca:ii}
    \end{enumerate}
\end{thm}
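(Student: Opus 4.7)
The plan is to reduce the statement to the Lipschitz coarea inequality (Theorem~\ref{thm:coarea-Lipschitz}) via a Lipschitz approximation argument, using the metric-surface hypothesis decisively to control the topology of level sets.

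For part~\ref{ca:i}, I would study the diameter function $\varphi(x):=\operatorname{diam}(C_x)$, where $C_x$ denotes the connected component of $v^{-1}(v(x))$ containing $x$. Continuity of $v$ combined with local compactness (inherited from the homeomorphism with a planar domain) makes $\varphi$ upper semicontinuous: if $x_n\to x$, then any Hausdorff subsequential limit of the continua $C_{x_n}$ is a connected subset of $v^{-1}(v(x))$ passing through $x$, hence contained in $C_x$. Therefore $\varphi$ is Borel, and $\mathcal{A}_v=\{\varphi>0\}$ is Borel.

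For part~\ref{ca:ii}, I would construct Lipschitz truncations $v_k$ of $v$ on closed sets $E_k\subset X$ exhausting $X$ up to measure zero, with $\lip(v_k)\leq C\cdot\rho_v^u$ pointwise on $E_k$. Such truncations arise from a standard maximal-function/Lebesgue-point analysis of the minimal weak upper gradient, followed by a McShane extension. Apply Theorem~\ref{thm:coarea-Lipschitz} to $v_k$, test against $g\cdot\mathbf{1}_{E_k}$, and pass to the limit: monotone convergence handles the right-hand side once one arranges that $\lip(v_k)\to\rho_v^u$ a.e.\ on $\bigcup_k E_k$, and Fatou for the upper integral handles the left-hand side. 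Continuity of $v$, together with the surface structure, ensures that non-degenerate components of $v_k^{-1}(t)$ converge (in Hausdorff distance, for a.e.\ $t$) to components of $v^{-1}(t)$, so that the limiting integrand is $\int_{v^{-1}(t)\cap\mathcal{A}_v}g\,d\mathcal{H}^1$.

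The main obstacle is two-fold: preserving the sharp constant $4/\pi$ through the approximation (no multiplicative loss can be absorbed in the McShane or truncation step, so the comparison $\lip(v_k)\leq\rho_v^u$ must hold without an extra constant in the limit), and correctly matching level sets of $v_k$ with the non-degenerate-component part of level sets of $v$. The surface hypothesis is essential for the second point — planarity forces components of level sets of a continuous function to be either singletons or $1$-dimensional continua with well-behaved $\mathcal{H}^1$-measure, so that Hausdorff limits of such components inside the approximating fibers deliver the correct $\mathcal{H}^1$-contribution. Without the surface structure this control collapses, which is why the statement is restricted to metric surfaces rather than general metric spaces.
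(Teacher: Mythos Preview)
The paper does not prove this theorem; it is quoted verbatim from \cite{MN23}*{Theorem 1.6} and used as a black box. There is therefore no ``paper's own proof'' to compare against.

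That said, your proposed route has a genuine gap. The Lipschitz truncation step---``standard maximal-function/Lebesgue-point analysis of the minimal weak upper gradient, followed by a McShane extension''---is \emph{not} available on a general metric surface. Those Lusin-type approximations (e.g.\ \cite{HKST:15}*{Theorem 8.1.49}) require a doubling measure and a Poincar\'e inequality, neither of which is assumed here: a metric surface is merely homeomorphic to a planar domain with locally finite $\mathcal H^2$, and can be far from doubling or from supporting any Poincar\'e inequality. Without those hypotheses you have no maximal-function bound, no Lebesgue differentiation for upper gradients, and hence no mechanism producing sets $E_k$ on which $v$ is Lipschitz with $\lip(v_k)$ controlled pointwise by $\rho_v^u$. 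This is exactly why the metric-surface coarea inequality of \cite{MN23} (and its monotone predecessor in \cite{EIR22}) is a nontrivial result rather than a corollary of Theorem~\ref{thm:coarea-Lipschitz}; the proofs there proceed through the weakly quasiconformal parametrization of Theorem~\ref{theorem:wqc} to import Euclidean structure, not through intrinsic Lipschitz approximation on $X$.

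A secondary issue: even granting the truncations, your limiting argument for the left-hand side is not justified. Hausdorff convergence of level-set components of $v_k$ to those of $v$ does not by itself imply lower semicontinuity of $\int g\,d\mathcal H^1$ along those components, and restricting to $E_k$ can disconnect components so that the sets $v_k^{-1}(t)\cap E_k$ need not see the non-degenerate components of $v^{-1}(t)$ at all. The sharp constant $4/\pi$, which you correctly flag as fragile, would also not survive a McShane step in general.
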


Theorem \ref{thm:coarea-Sobolev} generalizes the coarea inequality for monotone Sobolev functions established in \cite{EIR22}. 
Here $v\colon X\to\R$ is called a \emph{weakly monotone function} if for every open $\Omega$ compactly contained in $X$
$$\sup_\Omega v\leq\sup_{\partial\Omega} v<\infty\quad\text{ and }\quad\inf_\Omega v\geq\inf_{\partial\Omega} v>-\infty.$$
A continuous weakly monotone function is \emph{monotone}.

\begin{rmk} \label{rem:sense}
    In the proof of \cite{EIR22}*{Theorem 1.4} the coarea inequality for monotone Sobolev functions is used to show that every weakly monotone function $v\in\Nloc(X,\R)$ is continuous and hence monotone. Continuity of a sense-preserving map $f\in\Nloc(X,\R^2)$ now follows by applying the exact same proof strategy while replacing weak monotonicity with sense-preservation and the coarea inequality for monotone Sobolev maps with Theorem \ref{thm:coarea-Sobolev}. 
\end{rmk}

\subsection{Metric differentiability}
Let $(Y,d)$ be a complete metric space and  $U\subset\R^n$, $n \geq 1$, a domain. We say that $h\colon U\to Y$ is \emph{approximately metrically differentiable} at $z\in U$ if there exists a seminorm $N_z$ on $\R^2$ for which
    $$\ap\lim_{y\to z}\frac{d(h(y),h(z))-N_z(y-z)}{|y-z|}=0.$$
Here, $\ap\lim$ denotes the approximate limit (see \cite{EG92}*{Section 1.7.2}). If such a seminorm exists, it is unique and is called \emph{approximate metric derivative} of $h$ at $z$, denoted $\apmd h_z$. 
The following result follows from \cite{LWintrinsic}*{Lemma 3.1}. 

\begin{lemma} \label{lemma:length_cov}
Let $X$ and $Y$ be metric surfaces and $f\in\Nloc(X,Y)$. Almost every curve $\gamma\colon [a,b]\to X$ parametrized by arclength satisfies 
$$
\int_{f \circ \gamma} g \, ds = 
\int_a^b g(f(\gamma(t))) \cdot \apmd(f\circ\gamma)_t \, dt 
$$ 
for all Borel measurable $g:Y \to [0,\infty]$. 
\end{lemma}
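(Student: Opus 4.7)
The plan is to prove this by combining the absolute continuity of Sobolev maps along almost every curve with the metric differentiation theory for absolutely continuous curves, and then finishing with a one-dimensional change-of-variables argument. The reference to \cite{LWintrinsic}*{Lemma 3.1} suggests exactly this strategy.

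First I would recall that any $f \in N^{1,2}_{\text{loc}}(X,Y)$ satisfies the \emph{absolute continuity on curves} property: for all curves $\gamma$ outside an exceptional family of zero modulus, the composition $\sigma := f \circ \gamma$ is absolutely continuous as a map $[a,b] \to Y$. Combined with the fact that $f$ sends almost every curve to a rectifiable curve (from the upper gradient being in $L^2_{\text{loc}}$), we may restrict to curves $\gamma$ parametrized by arclength such that $\sigma = f \circ \gamma$ is absolutely continuous and rectifiable. For such $\sigma$, the classical Ambrosio--Tilli / Kirchheim theory guarantees the existence of a metric derivative $|\sigma'|(t)$ for a.e.\ $t \in [a,b]$, and this metric derivative coincides with the approximate metric derivative $\apmd(\sigma)_t$ wherever both are defined. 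Moreover $\ell(\sigma|_{[s,t]}) = \int_s^t \apmd(\sigma)_\tau\, d\tau$.

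Next I would set up the change of variables. Let $L(t) := \int_a^t \apmd(\sigma)_\tau\, d\tau$, so $L\colon [a,b] \to [0,\ell(\sigma)]$ is non-decreasing and absolutely continuous with $L'(t) = \apmd(\sigma)_t$ a.e. Parametrizing $\sigma$ by arclength yields $\tilde\sigma\colon [0,\ell(\sigma)] \to Y$ with $\sigma(t) = \tilde\sigma(L(t))$. By the definition of the line integral over a rectifiable curve and its invariance under reparametrization,
\begin{equation*}
\int_{f \circ \gamma} g\, ds \;=\; \int_0^{\ell(\sigma)} g(\tilde\sigma(s))\, ds.
\end{equation*}
Applying the one-dimensional area formula / change of variables for the absolutely continuous monotone function $L$ (noting that the contribution of the set $\{L' = 0\}$ vanishes on both sides because $L$ is constant there and $\apmd(\sigma) = 0$ there), one obtains
\begin{equation*}
\int_0^{\ell(\sigma)} g(\tilde\sigma(s))\, ds \;=\; \int_a^b g(\tilde\sigma(L(t)))\, L'(t)\, dt \;=\; \int_a^b g(f(\gamma(t))) \cdot \apmd(f\circ\gamma)_t\, dt.
\end{equation*}
This handles continuous compactly supported $g$; the extension to general Borel $g\colon Y \to [0,\infty]$ is then standard by monotone convergence and a monotone class argument.

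The main obstacle is not any single step but the careful bookkeeping to ensure everything holds simultaneously for curves outside a single modulus-zero exceptional family: ACC for $f$, rectifiability of $f \circ \gamma$, existence of $\apmd(f\circ\gamma)_t$ almost everywhere, and validity of the one-dimensional change of variables. Each ingredient is standard on its own in the metric Sobolev theory of \cite{HKST:15}, but fusing them requires invoking the precise version in \cite{LWintrinsic}*{Lemma 3.1} (where it is carried out for curves parametrized by arclength, matching our statement).
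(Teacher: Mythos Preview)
Your proposal is correct and matches the paper's treatment: the paper does not give an independent proof but simply states that the lemma follows from \cite{LWintrinsic}*{Lemma 3.1}, and your sketch is precisely the standard derivation underlying that reference (absolute continuity of $f\circ\gamma$ for almost every $\gamma$, existence of the metric derivative for absolutely continuous curves, and the one-dimensional change of variables via the arclength reparametrization). There is nothing to add.
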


Lemma \ref{lemma:length_cov} leads to the following properties of upper and lower gradients (see \cite{HKST:15}*{Proposition 6.3.3} for a proof involving upper gradients). 

\begin{corollary} \label{cor:gchange}
Let $X$ and $Y$ be metric surfaces and $f\in\Nloc(X,Y)$. Almost every curve $\gamma\colon [a,b]\to X$ parametrized by arclength satisfies the following properties. 
\begin{enumerate} 
\item $f$ is absolutely continuous on $\gamma$, 
\item $\rho_f^l(\gamma(t)) \leq \apmd(f\circ\gamma)_t \leq \rho_f^u(\gamma(t))$ for almost every $a<t<b$, 
\item if $g:Y \to [0,\infty]$ is Borel measurable, then 
$$
\int_\gamma \rho_f^l \cdot (g\circ f) \, ds \leq \int_{f \circ \gamma} g \, ds \leq \int_\gamma \rho_f^u \cdot (g\circ f) \, ds. 
$$
\end{enumerate}
\end{corollary}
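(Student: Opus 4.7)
The plan is to combine Lemma \ref{lemma:length_cov} with the defining inequalities for $\rho_f^u$ and $\rho_f^l$, and then apply the Lebesgue differentiation theorem. First I would exhibit a family $\Gamma_0$ of rectifiable curves of zero modulus outside of which any $\gamma\colon[a,b]\to X$ parametrized by arclength satisfies all of the following: (i) Lemma \ref{lemma:length_cov} applies to $\gamma$; (ii) for every subcurve $\gamma|_{[s,t]}$ we have $\ell(f\circ\gamma|_{[s,t]}) \leq \int_s^t \rho_f^u(\gamma(\tau))\,d\tau$; (iii) for every subcurve, $\ell(f\circ\gamma|_{[s,t]}) \geq \int_s^t \rho_f^l(\gamma(\tau))\,d\tau$; and (iv) $\rho_f^u\circ\gamma,\rho_f^l\circ\gamma\in L^1[a,b]$. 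Standard arguments reduce (ii) and (iii) to rational endpoints and allow the passage from ``a.e.\ curve'' to ``every subcurve of a.e.\ curve'' (as in \cite{HKST:15}*{Proposition 6.3.2}); (iv) follows from Fuglede's lemma since $\rho_f^u,\rho_f^l \in L^2_{\text{loc}}(X)$.

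For a good curve $\gamma\notin\Gamma_0$, applying Lemma \ref{lemma:length_cov} with $g\equiv 1$ to $\gamma|_{[s,t]}$ gives $\ell(f\circ\gamma|_{[s,t]})=\int_s^t \apmd(f\circ\gamma)_\tau\,d\tau$. Sandwiched against (ii) and (iii), this yields
\[
\int_s^t \rho_f^l(\gamma(\tau))\,d\tau \;\le\; \int_s^t \apmd(f\circ\gamma)_\tau\,d\tau \;\le\; \int_s^t \rho_f^u(\gamma(\tau))\,d\tau
\]
for every $[s,t]\subset[a,b]$, so Lebesgue differentiation delivers (2). Property (1) is then immediate: $t\mapsto \ell(f\circ\gamma|_{[a,t]})$ is absolutely continuous as the indefinite integral of the $L^1$ function $\rho_f^u\circ\gamma$, and this forces $f\circ\gamma$ itself to be absolutely continuous. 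For (3), since $\gamma$ is arclength-parametrized, $\int_\gamma h\,ds=\int_a^b h(\gamma(\tau))\,d\tau$ for every nonnegative Borel $h$ on $X$, so Lemma \ref{lemma:length_cov} applied to the Borel function $g$ on $Y$ together with the pointwise bounds from (2) produces both desired inequalities in one line.

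The main obstacle is purely a bookkeeping one: verifying that the exceptional families of curves for the upper-gradient inequality, the lower-gradient inequality, the identity $\ell(f\circ\gamma|_{[s,t]})=\int_s^t\apmd(f\circ\gamma)_\tau\,d\tau$, and the integrability of the gradients along the curve, all fit inside a single family of zero modulus. The upper-gradient half is classical; the lower-gradient analogue requires the construction from Section \ref{section:lower-gradient} to yield a version of $\rho_f^l$ whose defining inequality survives restriction to subcurves of almost every curve, which is obtained by the same modulus-subadditivity argument used for upper gradients.
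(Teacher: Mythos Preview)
Your proposal is correct and follows essentially the same approach the paper intends: the paper does not give an explicit proof of this corollary but simply refers to \cite{HKST:15}*{Proposition 6.3.3} for the upper-gradient half and indicates that the lower-gradient half is analogous, which is precisely the argument you have spelled out (collect exceptional families into a single zero-modulus family, sandwich $\ell(f\circ\gamma|_{[s,t]})$ using Lemma~\ref{lemma:length_cov} with $g\equiv 1$ and the gradient inequalities on subcurves, differentiate, then feed the pointwise bounds back into Lemma~\ref{lemma:length_cov}).
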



\subsection{Area formula on euclidean domains}

Suppose $U \subset \R^2$ is a domain and $h\in\Nloc(U,Y)$. Then $U$ can be covered up to a set of measure zero by countably many disjoint measurable sets $G_j$, $j\in \N$, such that $h|_{G_j}$ is Lipschitz. In particular, outside a set of measure zero $G_0\subset U$, $h$ satisfies Lusin's condition (N) (see \cite{HKST:15}*{Theorem 8.1.49}). 

By \cite{LWarea}*{Proposition 4.3}, every $h\in \Nloc(U,Y)$ is approximately metrically differentiable at a.e.\ $z\in U$. The following area formula follows from \cite{Kar07}*{Theorem 3.2}. Here, the \emph{Jacobian} $J(N_z)$ of a seminorm $N_z$ on $\R^2$ is zero if $N_z$ is not a norm and $J(N_z)={\pi}/{|\{y\in\R^2: N_z(y)\leq1\}|_2}$ otherwise.

\begin{thm}[Area formula]\label{thm:area-formula}
    If $h\in\Nloc(U,Y)$, then there exists $G_0\subset U$ with $\mathcal H^2(G_0)=0$ such that for every measurable set $A\subset U\setminus G_0$ we have
    \begin{align}\label{eq:area-formula}
       \int_A J(\ap\md h_z) \,d\mathcal H^2=\int_Y N(y,h,A)\,d\mathcal H^2.
    \end{align}
\end{thm}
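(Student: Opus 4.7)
The plan is to reduce to the classical Kirchheim area formula via the Lipschitz decomposition recalled just before the statement. Write $U = G_0' \cup \bigsqcup_{j \in \N} G_j$ with $\mathcal H^2(G_0') = 0$ and each restriction $h|_{G_j}$ Lipschitz. Since $Y$ embeds isometrically into some $\ell^\infty(I)$, I may extend each $h|_{G_j}$ coordinate-wise via McShane's theorem to a Lipschitz map $\tilde h_j : \R^2 \to \ell^\infty(I)$ that agrees with $h$ on $G_j$. Kirchheim's area formula then applies to each $\tilde h_j$: the metric derivative $(\md \tilde h_j)_z$ exists for $\mathcal H^2$-a.e.\ $z$, and for every measurable $A_j \subset \R^2$,
\[
\int_{A_j} J\bigl((\md \tilde h_j)_z\bigr) \, d\mathcal H^2(z) = \int_{\ell^\infty(I)} N(y, \tilde h_j, A_j) \, d\mathcal H^2(y).
\]

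The critical step is to identify $(\md \tilde h_j)_z$ with $\apmd h_z$ at $\mathcal H^2$-almost every $z \in G_j$. At any Lebesgue density point $z$ of $G_j$, $G_j$ has density one at $z$, so the ordinary limit defining $(\md \tilde h_j)_z$ may be read as an approximate limit relative to $U$; since $\tilde h_j$ and $h$ coincide on $G_j$, this approximate limit computes the approximate metric derivative of $h$ at $z$. By uniqueness of $\apmd h_z$ as a seminorm one concludes $(\md \tilde h_j)_z = \apmd h_z$, and hence $J((\md \tilde h_j)_z) = J(\apmd h_z)$, for $\mathcal H^2$-a.e.\ $z \in G_j$. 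Measurability of $z \mapsto J(\apmd h_z)$ follows from this piecewise identification together with the measurability on the Lipschitz side.

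Let $G_0$ be $G_0'$ enlarged by the countable union of exceptional null sets from the previous paragraph; still $\mathcal H^2(G_0) = 0$. For measurable $A \subset U \setminus G_0$, set $A_j := A \cap G_j$ and sum the Lipschitz area identity:
\[
\int_A J(\apmd h_z) \, d\mathcal H^2 = \sum_j \int_{A_j} J\bigl((\md \tilde h_j)_z\bigr) \, d\mathcal H^2 = \sum_j \int_Y N(y, h, A_j) \, d\mathcal H^2 = \int_Y N(y, h, A) \, d\mathcal H^2,
\]
using that $\tilde h_j = h$ on $A_j \subset G_j$ (so the multiplicity functions agree and the image lies in $Y$), isometric invariance of $\mathcal H^2$ under the Kuratowski embedding, and countable additivity of $A \mapsto N(y, h, A)$ on disjoint unions. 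The main obstacle is the metric-derivative identification in the middle paragraph: while morally clear via density-one arguments, the rigorous matching of approximate limits at density-one points with ordinary limits of a Lipschitz extension, together with the resulting measurability, must be argued carefully. This is precisely the content of Karmanova's \cite{Kar07}*{Theorem 3.2}, which the authors invoke as a black box.
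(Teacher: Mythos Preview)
The paper does not give its own proof of this theorem; it simply records that the statement ``follows from \cite{Kar07}*{Theorem 3.2}'' and uses it as a black box thereafter. Your proposal goes further and sketches the argument behind Karmanova's result: Lipschitz decomposition of $h$, Kuratowski embedding plus McShane extension of each piece, Kirchheim's area formula on the Lipschitz side, and the density-point identification of $\apmd h_z$ with the ordinary metric derivative of the extension. This outline is correct and is the standard route to the area formula in this setting; you yourself note at the end that it is precisely the content of \cite{Kar07}*{Theorem 3.2}. So there is no discrepancy in approach---the paper cites the result, while you have unpacked its proof.
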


Here, $N(y,h,A)$ denotes the \emph{multiplicity} of $y\in Y$ with respect to $h$ in $A$:  
\begin{equation} \label{eq:multip}
N(y,h,A):=\#\{z\in A: h(z) = y\}. 
\end{equation}

\subsection{Weakly quasiconformal parametrizations}
A map $h\colon X\to Y$ between metric surfaces is \textit{cell-like} if the preimage of each point is a continuum that is contractible in each of its open neighborhoods. A continuous, surjective, proper and cell-like map $h\colon X\to Y$ is \emph{weakly $C$-quasiconformal} if
$$\mod \Gamma\leq C \mod h(\Gamma)$$
holds for every family of curves $\Gamma$ in $X$. It follows from \cite{Wil:12}*{Theorem 1.1} that every weakly quasiconformal map $h\colon X\to Y$ is contained in $\Nloc(X,Y)$.

It was shown in \cite{NR22} that any metric surface admits a weakly quasiconformal parametrization, see also \cite{NR:21}, \cite{MW21}, \cite{Mei22}.

\begin{thm}[\cite{NR22}*{Theorem 1.2}]\label{theorem:wqc}
    Let $X$ be any metric surface. There is a weakly $(4/\pi)$-quasiconformal  $u\colon U \to X$, where $U\subset \R^2$ is a domain. 
\end{thm}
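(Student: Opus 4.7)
The plan is to obtain $u$ as a limit of quasiconformal parametrizations of approximating \emph{reciprocal} surfaces (in the sense of Rajala \cite{Raj:17}). The weakly quasiconformal limit map will in general have cell-like rather than point-like preimages, reflecting the fact that $X$ itself need not admit a genuine quasiconformal parametrization.

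First I would construct a sequence of metric surfaces $(X,d_n)$ obtained by slightly enlarging the distance, $d_n\geq d$, that converge to $(X,d)$ and satisfy Rajala's reciprocality condition with uniform constants. A natural choice is to fix a homeomorphism $\varphi\colon X\to U_0$ onto a planar domain, pull back a flat auxiliary metric $d_E$ to $X$, and define $d_n = d + n^{-1}d_E$. The auxiliary term dominates the directions in which $d$ could be degenerate, so $(X,d_n)$ is reciprocal. By Rajala's uniformization theorem each $(X,d_n)$ admits a $(4/\pi)$-quasiconformal homeomorphism $u_n\colon U\to (X,d_n)$ with $U\subset\R^2$ a fixed planar domain, after normalizing by three boundary points.

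Second, I would extract a locally uniformly convergent subsequence $u_n\to u$. Equicontinuity of $\{u_n\}$ follows from the standard modulus estimates for quasiconformal maps applied with the uniform distortion constant $4/\pi$, which bound $d(u_n(x),u_n(y))$ in terms of $|x-y|$ independently of $n$. The limit $u\colon U\to X$ is continuous and surjective. Cell-likeness of the preimages $u^{-1}(x)$ follows from $u$ being a uniform limit of homeomorphisms with controlled distortion: each preimage is a monotone continuum, and the uniform modulus bound excludes non-contractible or pathological shapes. Properness is a standard consequence of the boundary normalization.

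The remaining step, verification of $\mod\Gamma\leq (4/\pi)\mod u(\Gamma)$ for every curve family $\Gamma$ in $U$, is the main obstacle. It requires a lower semicontinuity argument for modulus under $d_n\to d$, combined with the fact that weakly admissible functions for $u_n(\Gamma)$ in $(X,d_n)$ can be transferred, via Fuglede-type reasoning, to weakly admissible functions for $u(\Gamma)$ in the limit. The sharp constant $4/\pi$ arises as the ratio controlling $\mathcal{H}^2$ against the minimal ``area'' needed to dominate a planar modulus—the same constant that appears in the coarea inequalities (Theorems~\ref{thm:coarea-Lipschitz} and \ref{thm:coarea-Sobolev})—so I expect the extremal-length analysis to parallel the coarea computation used elsewhere in the paper.
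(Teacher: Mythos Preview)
This theorem is not proved in the present paper; it is quoted from \cite{NR22}*{Theorem 1.2}, so there is no in-paper proof to compare your sketch against. What you have written is an attempt at the Ntalampekos--Romney argument itself.

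The overall shape --- approximate $X$ by nicer surfaces, uniformize those, pass to a limit --- does match the strategy of \cite{NR22} and \cite{NR:21}. But your specific construction has a genuine gap. You set $d_n = d + n^{-1}d_E$ with $d_E$ the pullback of a flat metric via an \emph{arbitrary} homeomorphism $\varphi:X\to U_0$. Such a $\varphi$ carries no metric or measure-theoretic information, so there is no reason $(X,d_n)$ should be reciprocal, or even have locally finite $\mathcal{H}^2$ comparable to that of $(X,d)$; the sentence ``the auxiliary term dominates the directions in which $d$ could be degenerate'' is exactly the unproved assertion. In \cite{NR22} and \cite{NR:21} the approximants are instead \emph{polyhedral} surfaces built from triangulations with quantitative control on areas and side lengths relative to $\mathcal{H}^2_d$; producing such approximants is the main technical content of those papers and cannot be replaced by a one-line metric perturbation.

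Your final step is also underpowered. Passing $\mod\Gamma\leq(4/\pi)\mod u_n(\Gamma)$ to the limit is not routine semicontinuity: path lengths and $\mathcal{H}^2$ both change with $n$, so admissibility does not transfer between $(X,d_n)$ and $(X,d)$, and modulus has the wrong semicontinuity direction for the inequality you want. The actual argument proceeds through the analytic side --- upper-gradient and Jacobian bounds for the limit map, yielding $u\in\Nloc(U,X)$ with the correct distortion --- rather than through a direct modulus limit.
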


\begin{rmk} \label{rmk:NN}
Condition (N) for sense-preserving maps $f \in \Nloc(X,\R^2)$ can be proved using the area formula and Theorem \ref{theorem:wqc} as follows: suppose $E \subset X$ and $\hm^2(E)=0$, and let $u\colon U \to X$ be a (sense-preserving) weakly $(4/\pi)$-quasiconformal parametrization of $X$ provided by Theorem \ref{theorem:wqc}. Define $h\colon U \to\R^2$ by $h:=f\circ u$. Then $u\in\Nloc(U,X)$ and $h\in\Nloc(U,\R^2)$, see \cite{MR24}*{Theorem 2.5}. 

By Theorem \ref{thm:area-formula} there exists $G_0\subset U$ with $|G_0|_2=0$ and such that \eqref{eq:area-formula} holds for $u$ and $h$ and every measurable set $A\subset U\setminus G_0$. We set $X_0:=u(G_0)$.
Now $h$ is sense-preserving and thus monotone. Therefore, $h$ satisfies Condition (N) by \cite{MalMar95}. In particular, with the above notation, 
$$
|f(E)|_2 \leq \int_{u^{-1}(E)} J(\apmd h_z) \, dz. 
$$
On the other hand, applying Theorem \ref{thm:area-formula} to $u$ shows that 
$$
\int_{u^{-1}(E)} J(\apmd u_z) \, dz \leq \hm^2(E)=0, 
$$
and so $J(\apmd u_z)=0$ almost everywhere in $u^{-1}(E)$. Since $u$ is weakly quasiconformal, it moreover follows that $\apmd u_z=0$. Then, by Lemmas \ref{lemma:min-max-stretch} and \ref{lem:compo} below, $J(\apmd h_z)=0$ 
almost everywhere in $u^{-1}(E)$ as well. We conclude that $|f(E)|_2=0$.   
\end{rmk}


\subsection{Distortion of Sobolev maps}
Let $U \subset \R^2$ be a domain. 
We define the \emph{maximal and minimal stretches} of $h\in\Nloc(U,Y)$ at points of approximate differentiability by 
\begin{eqnarray*} 
L_h(z)=\max\{\ap\md h_z(v):|v|=1\}, \quad  
l_h(z)=\min\{\ap\md h_z(v):|v|=1\}. 
\end{eqnarray*}
Recall that maps $h \in \Nloc(U,Y)$ are approximately differentiable almost everywhere. 

\begin{lemma} \label{lemma:min-max-stretch}
Let $h \in \Nloc(U,Y)$. Then $L_h$ and $l_h$ are representatives of the minimal weak upper gradient and the maximal weak lower gradient of $h$, respectively. Moreover, 
\begin{equation}\label{eq:disto}
2^{-1} L_h(z)l_h(z) \leq J(\ap\md h_z) \leq 
2 L_h(z)l_h(z)  
\end{equation} 
at points of approximate differentiability.  
\end{lemma}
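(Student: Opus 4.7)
The argument has two independent parts: identifying $L_h$ and $l_h$ with the extremal weak gradients of $h$, and establishing the Jacobian sandwich \eqref{eq:disto}, a purely geometric fact about seminorms on $\R^2$.

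For the first part, I would verify that $L_h$ is a weak upper gradient and $l_h$ a weak lower gradient of $h$. Lemma \ref{lemma:length_cov} applied with $g\equiv 1$ yields $\ell(h\circ\gamma) = \int_a^b \apmd(h\circ\gamma)_t\, dt$ for a.e.\ arclength-parametrized $\gamma$. The chain-rule identity $\apmd(h\circ\gamma)_t = \apmd h_{\gamma(t)}(\gamma'(t))$, valid a.e.\ along a.e.\ such $\gamma$, combined with $|\gamma'(t)|=1$, sandwiches the integrand between $l_h(\gamma(t))$ and $L_h(\gamma(t))$, giving
\[
\int_\gamma l_h\, ds \leq \ell(h\circ\gamma) \leq \int_\gamma L_h\, ds
\]
for a.e.\ $\gamma$. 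To upgrade these to a.e.\ identifications with $\rho_h^u$ and $\rho_h^l$, I would dualize using Corollary \ref{cor:gchange}(2): testing the bounds $\rho_h^l(\gamma(t)) \leq \apmd(h\circ\gamma)_t \leq \rho_h^u(\gamma(t))$ against translates of line segments in directions from a countable dense set $\{v_n\}\subset S^1$ and applying Fubini yields $\rho_h^l(z) \leq \apmd h_z(v_n) \leq \rho_h^u(z)$ for a.e.\ $z \in U$. The continuity of $v\mapsto\apmd h_z(v)$ on $S^1$ then upgrades these to $L_h(z) \leq \rho_h^u(z)$ and $l_h(z) \geq \rho_h^l(z)$ a.e., while the minimality of $\rho_h^u$ and maximality of $\rho_h^l$ applied to the weak gradients $L_h, l_h$ give the reverse inequalities.

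For \eqref{eq:disto}, fix $z$ at which $h$ is approximately metrically differentiable and set $N = \apmd h_z$, $L = L_h(z)$, $l = l_h(z)$. If $l=0$ then $N$ fails to be a norm, so $J(N)=0$ by definition and \eqref{eq:disto} is trivial. Otherwise $B_N := \{v \in \R^2 : N(v)\leq 1\}$ is a centrally symmetric convex body with $J(N) = \pi/|B_N|_2$. For the \emph{lower} bound on $J$, I would pick $v_L \in S^1$ with $N(v_L) = L$ and note that $B_N$ lies in the strip $\{|\langle\cdot,v_L\rangle|\leq 1/L\}$ and in the disc $\{|\cdot|\leq 1/l\}$ (from $N \geq l\,|\cdot|$); the intersection is contained in a $(2/L)\times(2/l)$ rectangle of area $4/(Ll)$, so $|B_N|_2 \leq 4/(Ll)$ and $J(N) \geq \pi Ll/4 \geq Ll/2$. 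For the \emph{upper} bound, I would pick $v_l \in S^1$ with $N(v_l) = l$ and a unit vector $v_l^\perp \perp v_l$ (with $N(v_l^\perp) \leq L$); convexity of $B_N$ places the rhombus with vertices $\pm v_l/l$ and $\pm v_l^\perp/N(v_l^\perp)$ inside $B_N$, contributing area $2/(l\, N(v_l^\perp)) \geq 2/(Ll)$, hence $|B_N|_2 \geq 2/(Ll)$ and $J(N) \leq \pi Ll/2 \leq 2Ll$.

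The main technical obstacle I anticipate is the Fubini/density step: the exceptional modulus-zero curve family in Corollary \ref{cor:gchange}(2) must be shown to avoid a.e.\ translate in each of the countably many chosen line directions, which is standard but requires care since families of parallel segments with positive planar measure have positive modulus. The chain rule for approximate metric derivatives and the elementary convex-body estimates for $J(N)$ are routine once this bookkeeping is in place.
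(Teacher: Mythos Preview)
Your proposal is correct. The first part (identifying $L_h,l_h$ with the extremal weak gradients) is essentially the standard argument that the paper defers to \cite{MN23}*{Lemma 2.14}; your Fubini-along-line-segments sketch and the modulus bookkeeping you flag are exactly the ingredients needed, and the concern you raise (that families of parallel segments have positive modulus, hence avoid the exceptional family) is the right resolution.

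For the Jacobian sandwich \eqref{eq:disto} you take a genuinely different route. The paper invokes John's theorem: the John ellipse $E_z$ of $B_z$ satisfies $E_z\subset B_z\subset\sqrt{2}E_z$, and since for the ellipse-norm $N_z$ one has $J(N_z)=M_zm_z$ exactly, the two inclusions yield $L_hl_h\leq M_zm_z\leq 2J(\apmd h_z)$ and $J(\apmd h_z)\leq M_zm_z\leq 2L_hl_h$. Your argument is more elementary: you bound $|B_N|_2$ directly from above by a $(2/L)\times(2/l)$ rectangle (using that the closest boundary point $v_L/L$ to the origin forces the supporting hyperplane there to be orthogonal to $v_L$) and from below by an inscribed rhombus on perpendicular diagonals. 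Both routes give the same constants $2^{-1}$ and $2$. Your approach is self-contained and avoids John's theorem for this lemma; the paper's approach has the side benefit that the John ellipse reappears later (in the proofs of Theorem~\ref{thm:area-ineq} and Lemma~\ref{lem:psi-L2}), where the fact that the parametrization $u$ can be chosen with round John ellipses yields $L_u\leq\sqrt{2}\,l_u$.
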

\begin{proof}
The first claim concerning upper gradients is \cite{MN23}*{Lemma 2.14}. A slight modification of the proof gives the claim concerning lower gradients. 

Towards \eqref{eq:disto}, we may assume that $\ap\md h_z$ is a norm. Then the unit ball $B_z$ of $\ap\md h_z(v)$ contains a unique ellipse of maximal area $E_z$, called the \emph{John ellipse} of $B_z$, which satisfies 
\begin{equation}\label{eq:John}
E_z \subset B_z \subset \sqrt{2}E_z,
\end{equation}
see \cite{Bal97}*{Theorem 3.1}. Let $N_z$ be the norm whose unit ball is $E_z$, and  
\[
M_z=\max\{N_z(v):|v|=1\}, \quad m_z=\min\{N_z(v):|v|=1\}. 
\]
Then $J(N_z)=\pi/|E_z|_2=M_zm_z$, and \eqref{eq:John} gives 
\[
L_h(z)l_h(z) \leq M_zm_z =J(N_z)= 2\pi/|\sqrt{2}E_z|_2 \leq 2 \pi/ |B_z|_2=2J(\ap\md h_z). 
\] 
On the other hand, \eqref{eq:John} also gives  
\[
J(\ap\md h_z) \leq J(N_z)=M_zm_z\leq 2L_h(z)l_h(z). 
\] 
The proof is complete. 
\end{proof}

We will apply distortion estimates on composed mappings. 

\begin{lemma}\label{lem:compo}
Let $X$ and $Y$ be metric surfaces and $U \subset \R^2$ a domain, $u:U \to X$ weakly quasiconformal, and $f \in \Nloc(X,Y)$. Then $$
l_{f \circ u}(z)\geq \rho_f^l(u(z))\cdot l_u(z) 
\quad \text{and} \quad 
L_{f \circ u}(z)\leq \rho_f^u(u(z))\cdot L_u(z) 
$$
for almost every $z \in U$. 
\end{lemma}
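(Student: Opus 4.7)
The plan is to use Lemma \ref{lemma:min-max-stretch} to identify $L_u, l_u$ and $L_{f\circ u}, l_{f\circ u}$ with the minimal weak upper and maximal weak lower gradients of $u$ and $f\circ u$, respectively. Given this, both asserted inequalities follow at once if I show that $(\rho_f^u\circ u)\cdot L_u$ is a weak upper gradient of $f\circ u$ and that $(\rho_f^l\circ u)\cdot l_u$ is a weak lower gradient of $f\circ u$: the first inequality in the lemma then drops out of minimality of $L_{f\circ u}$, the second out of maximality of $l_{f\circ u}$.

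For the upper gradient direction I would work with rectifiable curves $\gamma$ in $U$, parametrized by arclength, lying in the intersection of two families of full modulus: the one on which Corollary \ref{cor:gchange} applies to $u$, and the one on which $u\circ\gamma$ avoids the exceptional family $\Gamma_0\subset X$ where the upper gradient inequality for $f$ fails. The latter is a full-modulus family because $\mod\Gamma_0=0$ and, by weak $(4/\pi)$-quasiconformality of $u$ together with monotonicity of modulus,
$$\mod\{\gamma \text{ in } U : u\circ\gamma\in\Gamma_0\}\leq\frac{4}{\pi}\mod u(\{\gamma : u\circ\gamma\in\Gamma_0\})\leq\frac{4}{\pi}\mod\Gamma_0=0.$$
For such a good $\gamma$ joining $x,y\in U$, combining the upper gradient inequality for $f$ along $u\circ\gamma$ with Corollary \ref{cor:gchange}(3) for $u$ applied to $g=\rho_f^u$ yields
$$d_Y(f(u(x)),f(u(y)))\leq\int_{u\circ\gamma}\rho_f^u\,ds\leq\int_\gamma (\rho_f^u\circ u)\cdot L_u\,ds,$$
which is what is needed.

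The lower gradient direction is parallel. Take $\gamma$ in the full-modulus family where Corollary \ref{cor:gchange}(3) holds for $u$ applied to $g=\rho_f^l$ and where $u\circ\gamma$ satisfies the lower gradient inequality for $f$; the latter has full modulus by the same pushforward argument based on weak $(4/\pi)$-quasiconformality. Then
$$\ell((f\circ u)\circ\gamma)=\ell(f\circ(u\circ\gamma))\geq\int_{u\circ\gamma}\rho_f^l\,ds\geq\int_\gamma(\rho_f^l\circ u)\cdot l_u\,ds,$$
so $(\rho_f^l\circ u)\cdot l_u$ is a weak lower gradient of $f\circ u$, and maximality of $l_{f\circ u}$ closes the argument.

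The only delicate point is the modulus pushforward: weak $(4/\pi)$-quasiconformality of $u$ is used precisely to transport the zero-modulus exceptional families from $X$ back to $U$, both for the upper and lower gradient inequalities of $f$ and for the endpoint rectifiability/absolute continuity properties needed to apply Corollary \ref{cor:gchange}. Once this quasi-invariance is in place, the two composition estimates are a direct pairing of Corollary \ref{cor:gchange} for $u$ on the source side with the (weak) upper and lower gradient inequalities for $f$ on the target side.
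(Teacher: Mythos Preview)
Your proposal is correct and follows essentially the same route as the paper: both arguments use weak quasiconformality of $u$ to pull back the zero-modulus exceptional families from $X$ to $U$, then combine the (weak) upper/lower gradient inequality for $f$ along $u\circ\gamma$ with Corollary~\ref{cor:gchange} for $u$ to exhibit $(\rho_f^u\circ u)\cdot L_u$ and $(\rho_f^l\circ u)\cdot l_u$ as weak upper/lower gradients of $f\circ u$, and finish via Lemma~\ref{lemma:min-max-stretch}. The paper states the lower-gradient case first and additionally requires the gradient inequalities on all subcurves, but this is a cosmetic difference.
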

\begin{proof}
Let $\Gamma_0$ be the family of paths $\gamma$ in $U$ so that $l_u$ does not satisfy the lower gradient inequality \eqref{ineq:lower_gradient} for $u$ on some subcurve of $\gamma$ or $\rho_f^l$ does not satisfy the lower gradient inequality for $f$ on some subcurve of $u\circ \gamma$. Then, since $u$ is weakly quasiconformal and $l_u$, $\rho_f^l$ are weak lower gradients (Lemma \ref{lemma:min-max-stretch}), we conclude that $\mod(\Gamma_0)=0$. Applying Corollary \ref{cor:gchange}, we have 
$$
\ell(f \circ u \circ \gamma) \geq \int_{u \circ \gamma} \rho_f^l \, ds \geq \int_\gamma (\rho_f^l\circ u)\cdot l_u \, ds
$$ 
for every $\gamma \notin \Gamma_0$ parametrized by arclength. We conclude that 
$(\rho_f^l\circ u)\cdot l_u$ is a weak lower gradient of $f \circ u$. But $l_{f \circ u}$ is a maximal weak lower gradient of $f \circ u$ by Lemma \ref{lemma:min-max-stretch}. The first inequality follows. The second inequality is proved in a similar way. 
\end{proof}


\section{Area inequality on Metric surfaces} \label{section:area_ineq}
Let $X$ and $Y$ be metric surfaces. In this section we establish Theorem \ref{thm:area-ineq}, an area inequality for Sobolev maps in $\Nloc(X,Y)$ on measurable subsets of the rectifiable part of $X$. We apply Theorem \ref{thm:area-ineq} in Sections \ref{sec:discr} and \ref{sec:regul} below to prove our main results, Theorems \ref{thm:main} and \ref{thm:inverse-Sobolev}. 

As in Remark \ref{rmk:NN}, let $u\colon U \to X$ be a weakly $(4/\pi)$-quasiconformal parametrization of $X$ provided by Theorem \ref{theorem:wqc}, and $h\colon U \to Y$,  $h:=f\circ u$. Then $u\in\Nloc(U,X)$ and $h\in\Nloc(U,Y)$. By Theorem \ref{thm:area-formula}, there exists $G_0\subset U$ with $|G_0|_2=0$ and such that \eqref{eq:area-formula} holds for both $u$ and $h$ and every measurable set $A\subset U\setminus G_0$. We set $X_0:=u(G_0)$.
\begin{thm}[Area inequality]\label{thm:area-ineq}
    If $g\colon Y\to [0,\infty]$ and $E\subset X\setminus X_0$ are Borel measurable, then
    \begin{align*}
        \int_Eg(f(x))\cdot\rho_f^u(x)\rho_f^l(x)\,d\hm^2\leq 4\sqrt{2}\int_{Y}g(y)\cdot N(y,f,E)\,dy.
    \end{align*}
    If in addition, the map $f$ satisfies Lusin's condition (N), then
    \begin{align*}
        \int_Eg(f(x))\cdot\rho_f^u(x)\rho_f^l(x)\,d\hm^2\geq \frac{1}{4\sqrt{2}}\int_{Y}g(y)\cdot N(y,f,E)\,dy. 
    \end{align*}
\end{thm}

In order to establish Theorem \ref{thm:area-ineq}, we make use of the following proposition which can be seen as a counterpart to Lemma \ref{lem:compo}. 

\begin{prop} \label{prop:mildest}
Let $f$, $u$ and $h=f\circ u$ be as above. Then  
\begin{equation}\label{eq:inversecompo}
\rho_f^u  (u(z)) \cdot l_u(z) \leq L_h(z) \quad 
\text{and} \quad l_h(z) \leq \rho_f^l(u(z)) \cdot L_u(z) 
\end{equation}
for almost every $z \in U\setminus G_0$. 
\end{prop}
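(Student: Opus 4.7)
The plan is to deduce each inequality from the minimality of $\rho_f^u$ and the maximality of $\rho_f^l$ by constructing, from the known gradients of $h=f\circ u$ on $U$, a suitable auxiliary weak upper (resp.\ lower) gradient of $f$ on $X$. The transfer between the two sides goes through the change-of-variables identities in Lemma \ref{lemma:length_cov} and Corollary \ref{cor:gchange}.

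For the first inequality I would define $\sigma\colon X\to[0,\infty]$ by $\sigma(x):=L_h(z)/l_u(z)$ at the (essentially unique) preimage $z\in u^{-1}(x)\cap(U\setminus G_0)$ with $l_u(z)>0$, and $\sigma(x):=0$ otherwise. The aim is to verify that $\sigma$ is a weak upper gradient of $f$. Given a rectifiable curve $\gamma$ in $X$, I lift it to a curve $\eta$ in $U$ with $u\circ\eta=\gamma$ using the cell-like structure of $u$; excluding a zero-modulus exceptional family, the lift $\eta$ is rectifiable, the upper gradient inequality for $h$ along $\eta$ is valid, and Corollary \ref{cor:gchange}(2) gives $l_u(\eta(s))\leq \apmd(u\circ\eta)_s$ for almost every $s$. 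Then
\begin{align*}
 d_Y(f(\gamma(a)),f(\gamma(b)))
 &= d_Y(h(\eta(a)),h(\eta(b)))
 \leq \int_\eta L_h\,ds
 = \int_\eta (\sigma\circ u)\cdot l_u\,ds\\
 &\leq \int_\eta (\sigma\circ u)\cdot \apmd(u\circ\eta)_s\,ds
 = \int_\gamma \sigma\,ds,
\end{align*}
the last equality by Lemma \ref{lemma:length_cov}. Minimality of $\rho_f^u$ yields $\rho_f^u\leq \sigma$ almost everywhere on $X$, which pulls back to $\rho_f^u(u(z))\cdot l_u(z)\leq L_h(z)$ for a.e.\ $z\in U\setminus G_0$ with $l_u(z)>0$; the case $l_u(z)=0$ is immediate.

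For the second inequality I would symmetrically set $\tau(x):=l_h(z)/L_u(z)$ at the distinguished preimage when $L_u(z)>0$, and verify that $\tau$ is a weak lower gradient of $f$. Along the same lift $\eta$ of $\gamma$, using now $\apmd(u\circ\eta)_s\leq L_u(\eta(s))$ a.e.\ (Corollary \ref{cor:gchange}(2)) and the lower gradient inequality for $h$,
\begin{align*}
 \ell(f\circ\gamma)
 &= \ell(h\circ\eta)
 \geq \int_\eta l_h\,ds
 = \int_\eta (\tau\circ u)\cdot L_u\,ds\\
 &\geq \int_\eta (\tau\circ u)\cdot \apmd(u\circ\eta)_s\,ds
 = \int_\gamma \tau\,ds.
\end{align*}
Maximality of $\rho_f^l$ then gives $\tau\leq \rho_f^l$ a.e., equivalently $l_h(z)\leq \rho_f^l(u(z))\cdot L_u(z)$ for a.e.\ $z$ with $L_u(z)>0$. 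If $L_u(z)=0$, then $\apmd u_z\equiv 0$ forces $\apmd h_z\equiv 0$ and $l_h(z)=0$, so the inequality again holds trivially.

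I expect the main obstacle to be the lifting step: for a full-modulus family of rectifiable curves $\gamma$ in $X$, one must produce a rectifiable lift $\eta$ in $U$ along which all the required ``a.e.\ curve'' properties (upper gradient inequality for $h$, Corollary \ref{cor:gchange}(2) applied to $u$) hold simultaneously. The existence of continuous lifts is provided by the cell-like topology of $u$, but promoting these to rectifiable \emph{good} lifts calls for a Fuglede-type argument exploiting the weakly $(4/\pi)$-quasiconformal modulus inequality satisfied by $u$. A secondary technical point is verifying that $\sigma$ and $\tau$ are unambiguously defined, which follows from the area formula for $u$ forcing multiplicity one almost everywhere on the set $\{J(\apmd u_z)>0\}$.
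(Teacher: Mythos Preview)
Your high-level strategy---defining $\sigma(x)=L_h(z)/l_u(z)$ at the essentially unique preimage and invoking minimality of $\rho_f^u$ (and dually for $\tau$ and $\rho_f^l$)---is exactly what the paper does. The divergence is in how the upper gradient inequality for $\sigma$ is verified along curves $\gamma$ in $X$, and here the paper does \emph{not} lift.

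Your lifting step is, as you anticipate, the real obstacle, and the available modulus inequality points the wrong way. Weak quasiconformality gives $\mod\Gamma\le C\mod u(\Gamma)$, which is what lets Lemma~\ref{lem:compo} push exceptional families from $X$ to $U$; you need the opposite direction, namely that the zero-modulus family of ``bad'' curves $\eta$ in $U$ (those failing Corollary~\ref{cor:gchange} for $u$ or the upper gradient inequality for $h$) produces a zero-modulus family of $\gamma$ in $X$ all of whose lifts are bad. There is no such inequality for weakly quasiconformal maps, and since $u$ may collapse nontrivial continua, even the existence of \emph{rectifiable} lifts of generic $\gamma$ is not clear.

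The paper sidesteps lifting altogether. It decomposes $U\setminus G_0'$ into Borel sets $K_i$ on which $\apmd u_z$ and $\apmd h_z$ are \emph{uniformly} approximated (via \cite{LWarea}*{Proposition 4.3}). Working directly with a curve $\gamma$ in $X$, one takes a density point $t_0$ of $T=\gamma^{-1}(u(K_i)\cap\widehat X)$ and a sequence $t_j\to t_0$ in $T$; since $\widehat X$ has full measure the preimages $z_j=u^{-1}(\gamma(t_j))\in K_i$ are unique and $z_j\to z_0$. The uniform differentiability estimates on $K_i$ then give
\[
\frac{d_Y\bigl(f(\gamma(t_j)),f(\gamma(t_0))\bigr)}{d_X\bigl(\gamma(t_j),\gamma(t_0)\bigr)}
\le \frac{L_h(z_0)}{l_u(z_0)}+o(1),
\]
whence $\apmd(f\circ\gamma)_{t_0}\le L_h(z_0)/l_u(z_0)$. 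By Lemma~\ref{lemma:length_cov} and Corollary~\ref{cor:gchange} this exhibits $\sigma$ as a weak upper gradient of $f$ on $u(K_i)$, with no curve in $U$ ever constructed. The second inequality is handled symmetrically.
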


\begin{proof} 
Fix Borel representatives of the maps $z\mapsto\apmd u_z$ and $z\mapsto\apmd h_z$. Towards the first inequality in \eqref{eq:inversecompo}, we denote 
$$
G_0'=G_0 \cup \{z \in U: \, l_u(z)=0\}, 
$$
and notice that it suffices to prove the inequality for almost every $z \in U \setminus G_0'$. By \cite{LWarea}*{Proposition 4.3}, there are pairwise disjoint Borel sets $K_i \subset U \setminus G_0'$, $i \in \mathbb{N}$, 
so that 
\begin{equation}\label{downdown}
|U \setminus (G_0' \cup (\cup_i K_i))|_2=0
\end{equation} 
and so that for every $i \in \mathbb{N}$ we have 
\begin{itemize}
\item[(i)] $\apmd u_z$ and $\apmd h_z$ exist for every $z \in K_i$ and 
\item[(ii)] for every $\varepsilon>0$ there is $r_i(\varepsilon)>0$ so that 
\begin{eqnarray*}
& & |d_X(u(z+v),u(z+w))-\apmd u_z(v-w)| \leq \varepsilon |v-w| \quad \text{and } \\ 
& & |d_Y(h(z+v),h(z+w)) - \apmd h_z(v-w)| \leq \varepsilon |v-w| 
\end{eqnarray*}
for every $z \in K_i$ and all $v,w \in \mathbb{R}^2$ with $|v|,|w| \leq r_i(\varepsilon)$ and such that $z+v,z+w \in K_i$. 
\end{itemize}

We will show that if $i \in \mathbb{N}$ then almost every curve $\gamma$ in $X$ parametrized by arclength has the following property: almost every $t \in \gamma^{-1}(u(K_i))$ satisfies 
\begin{equation}\label{limppu} 
\apmd(f\circ\gamma)_t \leq \frac{L_h(z)}{l_u(z)} 
\quad \text{for all } z \in u^{-1}(\gamma(t)) \cap K_i.
\end{equation}
  
We show how to conclude the first inequality in \eqref{eq:inversecompo} from \eqref{limppu}. By Lemma \ref{lemma:length_cov}, Corollary \ref{cor:gchange} and \eqref{limppu}, $\rho\colon X\to[0,\infty]$ is a weak upper gradient of $f$, where 
$\rho(x)=\rho^u_f(x)$ for $x \in X \setminus u(K_i)$ and 
$$\rho(x)=\inf_{z \in K_i, \, u(z)=x} \frac{L_h(z)}{l_u(z)} 
$$
when $x \in u(K_i)$. By the definition of minimal weak upper gradients, we then have that 
\begin{equation} \label{eq:jisma}
\rho^u_f(x) \leq \rho(x) \quad \text{for almost every }x \in u(K_i). 
\end{equation} 
Since $K_i \subset U \setminus G'_0$, we have $l_u>0$ and thus $J(\ap\md u_z)>0$ in $K_i$. Combining \eqref{eq:jisma} with the Area formula (Theorem \ref{thm:area-formula}) for $u$ now yields  
$$
\rho_f^u  (u(z)) \cdot l_u(z) \leq L_h(z) 
$$ 
for almost every $z \in K_i$. The first inequality in \eqref{eq:inversecompo} follows from \eqref{downdown}.  

We now prove \eqref{limppu}. Denote by $\widehat{X}\subset X$ the set of points $x$ for which $N(x,u,U)=1$. By \cite{NR:21}*{Remark 7.2}, $\mathcal{H}^2(X \setminus \widehat{X})=0$. In particular, almost every rectifiable curve $\gamma:[0,\ell(\gamma)] \to X$ parametrized by arclength satisfies $\gamma(t)\in \widehat{X}$ for $\mathcal{H}^1$-almost every $0<t<\ell(\gamma)$. 

We fix such a $\gamma$ and a density point $t_0 \in \gamma^{-1}(u(K_i)\cap \widehat{X})=:T$ of $T$. By Corollary \ref{cor:gchange}, we may moreover assume that 
$f\circ \gamma$ is approximately metrically differentiable at $t_0$. It suffices to show that \eqref{limppu} holds for $t_0$ and the unique $z_0=u^{-1}(\gamma(t_0)) \in K_i$. 

Fix a sequence $(t_j)$ of points in $T$ converging to $t$. Then $x_j:=\gamma(t_j) \to \gamma(t_0)=:x_0$. Moreover, since $x_0 \in \widehat{X}$, we have 
$z_j:=u^{-1}(x_j) \to z_0$. We are now in position to apply Property (ii) above. Denoting $y_j=f(x_j)$ for $j=0,1,\ldots$, (ii) and triangle inequality yield 
\begin{eqnarray*}
\frac{d_X(x_j,x_0)}{|z_j-z_0|} &\geq& \apmd u_{z_0}\Big(\frac{z_j-z_0}{|z_j-z_0|}\Big) -o(|z_j-z_0|) 
\geq l_u(z_0)-o(|z_j-z_0|), \\ 
\frac{d_Y(y_j,y_0)}{|z_j-z_0|} &\leq& \apmd h_{z_0}\Big(\frac{z_j-z_0}{|z_j-z_0|}\Big) +o(|z_j-z_0|) \leq L_h(z_0)+o(|z_j-z_0|). 
\end{eqnarray*}
Combining the inequalities, we have 
\begin{equation}\label{nakki}  
\frac{d_Y(y_j,y_0)}{d_X(x_j,x_0)}= 
\frac{d_Y(y_j,y_0)\cdot |z_j-z_0|}{|z_j-z_0|\cdot d_X(x_j,x_0)} 
\leq \frac{L_h(z_0)}{l_u(z_0)} + o(|z_j-z_0|). 
\end{equation}
Since $\gamma$ is parametrized by arclength, \eqref{nakki} gives \eqref{limppu}. The first inequality in \eqref{eq:inversecompo} follows. The second inequality follows in a similar way, namely showing that instead of \eqref{limppu} we have   
$$
\apmd(f\circ\gamma)_t \geq \frac{l_h(z)}{L_u(z)} 
$$
outside suitable exceptional sets. We leave the details to the reader. 
\end{proof}

\begin{proof}[Proof of Theorem \ref{thm:area-ineq}]
    We may approximate $g$ with simple functions and replace $E$ with appropriate subsets to see that it suffices to show the claim for $g \equiv 1$. We set $E'=E\cap \widehat{X}$, where $\widehat{X}$ is as in the proof of Proposition \ref{prop:mildest}, and obtain
    \begin{align}\label{eq:mult-precomp-wqc}
        N(y,h,u^{-1}(E'))=\sum_{x\in f^{-1}(y)}
        N(x,u,u^{-1}(E'))= N(y,f,E')
    \end{align}
    for every $y\in f(E')$. 
    
     The area formula (Theorem \ref{thm:area-formula}) implies
    \begin{align*}
        \int_E\rho_f^u(x)\rho_f^l(x)\,d\hm^2&=
        \int_{E'}\rho_f^u(x)\rho_f^l(x)
        N(x,u,u^{-1}(E'))\,d\hm^2\\
        &=\int_{u^{-1}(E')}\rho_f^u(u(z))\rho_f^l (u(z))J(\ap\md u_z)\,dz.
    \end{align*}
 By Lemma \ref{lemma:min-max-stretch}, $J(\ap\md u_z)\leq 2 L_u(z)\cdot l_u(z)$ for almost every $z\in u^{-1}(E')$. Moreover, it follows from the proof of Theorem \ref{theorem:wqc} given in \cite{NR22} that we can choose $u$ so that the John ellipse of $\ap\md u_z$ (see \eqref{eq:John}) is a disk. Then $L_u(z)\leq \sqrt{2} l_u(z)$, which leads to  
$$
J(\ap\md u_z) \leq 2 L_u(z)\cdot l_u(z)
\leq 2\sqrt{2}\cdot l_u(z)^2 \quad \text{for almost every } z\in u^{-1}(E'). 
$$ 
Combining with Lemma \ref{lem:compo} and Proposition \ref{prop:mildest}, we conclude that 
    \begin{align*}
 \int_E\rho_f^u(x)\rho_f^l(x)\,d\hm^2
 \leq 2\sqrt{2} \int_{u^{-1}(E')} L_h(z) l_h(z)\,dz.
    \end{align*}
   Applying Lemma \ref{lemma:min-max-stretch} and the area formula (Theorem \ref{thm:area-formula}) to $h$, we finally obtain
    \begin{align*}
        \int_E\rho_f^u(x)\rho_f^l(x)\,d\hm^2& \leq 4\sqrt{2}\int_{u^{-1}(E')}J(\ap\md h_z)\,dz\\
        &= 4\sqrt{2}\int_{f(E')}N(y,h,u^{-1}(E'))\,dy.
    \end{align*}
    The theorem follows by combining with \eqref{eq:mult-precomp-wqc}. 
    
    For the second statement we note that $f$ satisfying Lusin's condition (N) implies $\hm^2(f(E\setminus E'))=0$ as, by \cite{NR:21}*{Remark 7.2}, $\hm^2(E\setminus E')=0$. The rest of the proof is analogous to the arguments above.
\end{proof}


\section{Openness and discreteness} \label{sec:discr}
Throughout this section let $f$ be as in Theorem \ref{thm:main}, i.e., \ $f\in\Nloc(X,\R^2)$ is non-constant, sense-preserving and satisfies $K_f\in\Lloc^1(X)$. Recall that $f$ is continuous by Remark \ref{rem:sense}. 

A map $f\colon X \to\R^2$ is \textit{light} if $f^{-1}(y)$ is totally disconnected for every $y\in \R^2$. It is well-known that if $f$ is continuous, sense-preserving and light, then $f$ is open and discrete \cite{TY62}, \cite{Ric93}*{Lemma VI.5.6}. Thus, in order to prove Theorem \ref{thm:main} it suffices to show that $f$ is in fact light. The proof of this fact relies on the following two propositions involving estimates on the multiplicity of $f$ (recall notation $N(y,h,A)$ for multiplicity in \eqref{eq:multip}).

\begin{prop}\label{prop:discr-open-nbhd}
    Suppose that there are  $s,r_0>0$ and $C>0$ such that
    \begin{align}\label{ineq:multiplicity}
       \int_0^{2\pi} N(f(x_0)+re^{i\theta},f,B(x_0,s)) \, d\theta \leq C\log \frac{1}{r}
    \end{align}
    for all $r<r_0$. Then the $x_0$-component of $f^{-1}(f(x_0))$ either is $\{x_0\}$ or contains an open neighborhood of $x_0$.  
\end{prop}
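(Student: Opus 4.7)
The plan is to argue by contradiction. Assume $C_0 \neq \{x_0\}$ and that $C_0$ does not contain any open neighborhood of $x_0$. After translating so that $f(x_0)=0$, my goal is to force the angular multiplicity integral in \eqref{ineq:multiplicity} to grow strictly faster than $\log(1/r)$, contradicting the hypothesis.

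The starting observation is that, under the assumptions, $C_0$ is a nondegenerate continuum through $x_0$ in $X$, so by connectedness $C_0 \cap \partial B(x_0, t) \neq \emptyset$ for all $t$ in an interval $(0, \delta)$, $\delta > 0$. Using the maximality of connected sets, for each small scale $\rho > 0$ I would extract $n = \lfloor \delta/\rho \rfloor$ pairwise $\rho$-separated points $p_1, \dots, p_n \in C_0 \cap B(x_0, s)$.

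The heart of the proof is to construct, around each $p_i$, a Jordan domain $V_i$ with $\overline{V_i} \subset B(p_i, \rho/3) \subset B(x_0, s)$ and $\min_{\partial V_i} |f| \geq \eta$ for a common level $\eta = \eta(\rho) > 0$. Pairwise disjointness of the $V_i$ is automatic from the disjointness of the balls $B(p_i, \rho/3)$. Once the $V_i$ are produced, sense-preservation plus degree theory gives, for every $y \in \D(0, \eta(\rho))$, that $y \notin f(\partial V_i)$ and the degree is locally constant, so $\deg(y, f, V_i) = \deg(0, f, V_i) \geq 1$, hence $N(y, f, V_i) \geq 1$; summing yields $N(y, f, B(x_0, s)) \geq n$, which on integration over $|y|=r<\eta(\rho)$ produces
\[
\int_0^{2\pi} N(re^{i\theta}, f, B(x_0,s))\, d\theta \geq 2\pi\lfloor \delta/\rho \rfloor.
\]
Combined with the hypothesis, this forces $\eta(\rho) \leq \exp(-c\delta/\rho)$, and the contradiction then closes once we exhibit a lower bound on $\eta(\rho)$ that beats this exponential rate.

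The main obstacle lies in the construction of the $V_i$ with the right boundary behavior. The naive choice $V_i = $ the $p_i$-component of $\{|f|<\eta\}\cap B(p_i, \rho/3)$ does not work directly, because $C_0$ typically passes through $\partial B(p_i, \rho/3)$, so the level component extends to this sphere where $|f|$ can vanish. A refined construction is needed: one should separate, by a Jordan curve $\gamma_i \subset B(p_i, \rho/3) \setminus C_0$, a small subcontinuum $P_i \subset C_0 \cap B(p_i, \rho/3)$ containing $p_i$ from the remainder of $C_0$, and take $V_i$ as the $p_i$-component of $\{|f|<\eta\} \cap W_i$, where $W_i$ is the Jordan domain bounded by $\gamma_i$. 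Finding the curves $\gamma_i$ and simultaneously controlling $|f|$ from below on all of them is where the Sobolev coarea inequality (Theorem~\ref{thm:coarea-Sobolev}) enters essentially: via a Fubini pigeonhole argument over a slab of candidate curves and levels, one selects a common $\eta$ satisfying the required bound. Pulling back through the weakly quasiconformal parametrization $u\colon U\to X$ of Theorem~\ref{theorem:wqc} reduces the separation step to the familiar planar setting, and the Sobolev-type modulus of continuity of $f$ supplies the necessary lower bound on $\eta(\rho)$ that beats $\exp(-c\delta/\rho)$.
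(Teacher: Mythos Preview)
Your approach has two genuine gaps.

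\textbf{The separation step fails.} You want a Jordan curve $\gamma_i \subset B(p_i,\rho/3)\setminus C_0$ whose inner Jordan domain $W_i$ contains $p_i$. But $C_0$ is a continuum containing both $p_i$ and $x_0$, and $d(p_i,x_0)\geq \rho>\rho/3$; hence the $p_i$-component of $C_0\cap \overline{B}(p_i,\rho/3)$ is a continuum joining $p_i$ to $\partial B(p_i,\rho/3)$. Any Jordan curve in $B(p_i,\rho/3)$ that encloses $p_i$ must therefore meet $C_0$. There is simply no Jordan curve in $B(p_i,\rho/3)\setminus C_0$ separating a subcontinuum of $C_0$ through $p_i$ from the rest, and without it you cannot ensure $0\notin f(\partial V_i)$; the degree argument that was supposed to produce $N(y,f,B(x_0,s))\geq n$ collapses.

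\textbf{The closing quantitative step is unjustified.} Even granting the $V_i$, you need a lower bound on $\eta(\rho)=\min_{\bigcup_i\gamma_i}|f|$ that decays more slowly than $\exp(-c\delta/\rho)$. For general metric surfaces with only $K_f\in L^1_{\text{loc}}$, the oscillation estimates obtainable from Theorem~\ref{thm:coarea-Sobolev} are of logarithmic type; there is no ``Sobolev-type modulus of continuity'' in this setting that beats an exponential rate. This is not a technicality: integrability of $K_f$ is essentially sharp already for continuity, so the sentence ``the Sobolev-type modulus of continuity of $f$ supplies the necessary lower bound'' hides the entire difficulty rather than resolving it.

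The paper avoids both obstacles by a modulus argument instead of a degree count. One fixes a continuum $I\subset V_0\setminus f^{-1}(0)$ and, via Theorem~\ref{thm:coarea-Lipschitz} and $K_f\in L^1$, produces a family $\Gamma'$ of level-set curves joining $I$ to the nondegenerate continuum $J\subset f^{-1}(0)$ with $\mod_{K^{-1}}\Gamma'>0$. On the other hand, the Area inequality (Theorem~\ref{thm:area-ineq}) gives
\[
\mod_{K^{-1}}\Gamma' \leq 4\sqrt{2}\int_{\R^2} g_\varepsilon(y)^2\,N(y,f,B(x_0,s))\,dy
\]
for any admissible $g_\varepsilon$ for $f(\Gamma')\subset\Gamma(\{0\},\partial\D(0,e^{-2});\R^2)$. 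Choosing $g_\varepsilon(y)=\varepsilon\big(|y|\log\tfrac{1}{|y|}\log\log\tfrac{1}{|y|}\big)^{-1}\chi_{\D(0,e^{-2})}$ and invoking the hypothesis \eqref{ineq:multiplicity} in polar coordinates makes the right side tend to $0$ as $\varepsilon\to 0$, forcing $\mod_{K^{-1}}\Gamma'=0$, a contradiction. No pointwise control of $|f|$ near $C_0$ and no Jordan-curve separation inside $C_0$ are needed.
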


Recall that $X$ is homeomorphic to a planar domain. In particular, for every $x_0 \in X$ there is $s>0$ so that $\overline{B}(x_0,2s)$ is a compact subset of $X$. 

\begin{prop}\label{prop:ineq-multiplicity}
    Let $x_0 \in X$ and $s>0$ so that $\overline{B}(x_0,2s) \subset X$ is compact. Then Condition \eqref{ineq:multiplicity} holds with some $r_0,C>0$. 
\end{prop}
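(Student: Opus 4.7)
Set $y_0 := f(x_0)$ and
\[
n(\rho) := \int_0^{2\pi} N(y_0 + \rho e^{i\theta}, f, B(x_0, s))\, d\theta.
\]
The plan is to adapt the value distribution argument of Iwaniec--\v Sver\'ak~\cite{IwaSve93} to the metric setting, combining the area inequality (Theorem~\ref{thm:area-ineq}), the Sobolev coarea inequality (Theorem~\ref{thm:coarea-Sobolev}) applied to the continuous Sobolev function $\phi(x) := |f(x) - y_0|$ with upper gradient $\rho_f^u$, and the distortion inequality $(\rho_f^u)^2 \leq K_f \rho_f^u \rho_f^l$ with $K_f \in L^1_{\mathrm{loc}}(X)$.

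First, I fix $r_0 > 0$ small depending on $x_0$ and $s$. For almost every $\rho \in (0, r_0)$, Theorem~\ref{thm:coarea-Sobolev} tells us that the level set $\phi^{-1}(\rho) \cap B(x_0, s)$ is $\mathcal{H}^1$-finite and its non-degenerate components lie in $\mathcal{A}_\phi$. A planar topology argument exploiting continuity and sense-preservation of $f$ then identifies these components as finitely many Jordan curves $\gamma_i$ winding around $y_0$ with positive integer winding numbers $w_i$ satisfying $2\pi \sum_i w_i = n(\rho)$. Combining the image-length identity $\ell(f \circ \gamma_i) = 2\pi \rho \cdot w_i$ with the upper-gradient length inequality of Corollary~\ref{cor:gchange} then yields the pointwise bound
\[
\rho \cdot n(\rho) \leq \int_{\phi^{-1}(\rho) \cap B(x_0, s)} \rho_f^u \, d\mathcal{H}^1.
\]

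Next, dividing both sides by $\phi = \rho$ on the level set, integrating over $\rho \in (r, r_0)$, and applying Theorem~\ref{thm:coarea-Sobolev} to $\phi$ with the weight $g = (\rho_f^u / \phi) \chi_{E_r}$, where $E_r := \{x \in B(x_0, s) : r < \phi(x) < r_0\}$, leads to
\[
\int_r^{r_0} n(\rho) \, d\rho \leq \frac{4}{\pi} \int_{E_r} \frac{(\rho_f^u)^2}{|f - y_0|} \, d\mathcal{H}^2.
\]
I then use the distortion inequality to replace $(\rho_f^u)^2$ by $K_f \rho_f^u \rho_f^l$ and apply a suitable Cauchy--Schwarz inequality, combined with Theorem~\ref{thm:area-ineq} (which via the weight $g(y) = |y-y_0|^{-2} \chi_{A(y_0; r, r_0)}$ gives $\int_{E_r} \rho_f^u \rho_f^l / |f - y_0|^2 \leq 4\sqrt{2} \int_r^{r_0} n(t)/t \, dt$), to derive a self-referential integral inequality of the form
\[
\int_r^{r_0} n(\rho) \, d\rho \leq C \sqrt{\int_r^{r_0} \frac{n(t)}{t} \, dt}
\]
with $C$ depending on $\|K_f\|_{L^1(B(x_0, s))}$ and $r_0$. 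A bootstrap argument in the spirit of~\cite{IwaSve93} then converts this into the pointwise logarithmic bound $n(r) \leq C' \log(1/r)$ for all $r < r_0$.

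The main obstacle is the level-set analysis in the first step: identifying, for almost every $\rho$, the preimages of circles $S(y_0, \rho)$ as finitely many Jordan curves winding positively around $y_0$. In the metric setting, where differentiability is unavailable, this relies critically on the rectifiability output of Theorem~\ref{thm:coarea-Sobolev} combined with a careful planar topology argument exploiting continuity and sense-preservation of $f$. Executing the Cauchy--Schwarz/bootstrap step with $K_f$ only $L^1$-integrable is a secondary technical subtlety, adapted from the Iwaniec--\v Sver\'ak technique.
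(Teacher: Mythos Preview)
Your approach via circular level sets of $\phi = |f - y_0|$ is genuinely different from the paper's, which instead slices \emph{radially}: it applies the Sobolev coarea inequality to $\varphi = \arg(f - y_0)$ and, after first proving that $f$ is a local homeomorphism off the preimage of a null set of rays (Lemmas~\ref{lem:2.1}--\ref{lem:2.2}), lifts radial segments $\{te^{i\theta}: r \le t \le 1\}$ to curves in $X$ of length at least $s$. This produces, for $E_m = \{\theta: N(re^{i\theta}, f, B(x_0,s)) = m\}$, the estimate
\[
s \cdot m \cdot |E_m|_1 \;\le\; \frac{4}{\pi} \int_{F_m} \frac{\rho_f^u}{|f|}\, d\mathcal{H}^2,
\]
with a \emph{single} power of $\rho_f^u$ on the right. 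Writing $\rho_f^u = K_f^{1/2}(\rho_f^u \rho_f^l)^{1/2}$ and applying Cauchy--Schwarz then cleanly separates $\big(\int K_f\big)^{1/2}$ from $\big(\int \rho_f^u\rho_f^l/|f|^2\big)^{1/2}$, the latter being bounded by $(m|E_m|_1\log(1/r))^{1/2}$ via the area inequality. Summing over $m$ gives the bound directly, with no bootstrap.

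Your route leads instead to $\int (\rho_f^u)^2/|f-y_0|$, the extra power of $\rho_f^u$ coming from applying coarea to $\phi$ with weight $g = \rho_f^u/\phi$, and this is where the argument breaks. Rewriting $(\rho_f^u)^2 = K_f\,\rho_f^u\rho_f^l$ leaves $\int_{E_r} K_f\,\rho_f^u\rho_f^l/|f-y_0|$, and there is no Cauchy--Schwarz or H\"older split that isolates $\int K_f$ while leaving a second factor of the form $\int\rho_f^u\rho_f^l/|f-y_0|^2$ controllable by Theorem~\ref{thm:area-ineq}: any split producing $\big(\int K_f\big)^{1/2}$ forces $K_f$ into the other factor as well, and $K_f$ is only in $L^1$. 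So the self-referential inequality you state cannot be derived this way; and even granting it, the passage from $\int_r^{r_0} n\,d\rho \le C\sqrt{\int_r^{r_0} n(t)/t\,dt}$ to a pointwise bound $n(r) \le C\log(1/r)$ is not explained and does not follow by any standard ODE comparison. The Jordan-curve/winding-number structure of circular level sets is also unjustified (finite $\mathcal{H}^1$-measure does not yield Jordan curves), though the weaker inequality $\rho\, n(\rho) \le C\int_{\phi^{-1}(\rho)} \rho_f^u\,d\mathcal{H}^1$ could in principle be recovered using sense-preservation to rule out isolated level-set points together with the curve parametrization of continua as in Lemma~\ref{lem:level-set-curves}.
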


Theorem \ref{thm:main} follows by combining Propositions \ref{prop:discr-open-nbhd} and \ref{prop:ineq-multiplicity}: since $f$ is not constant, for every $y_0 \in f(X)$ every component $F$ of $f^{-1}(y_0)$ contains a point $x_0 \in X$ which is a boundary point of $F$. Combining Propositions \ref{prop:discr-open-nbhd} and \ref{prop:ineq-multiplicity}, we see that $F=\{x_0\}$. We conclude that $f$ is light and therefore open and discrete.

\subsection{Proof of Proposition \ref{prop:discr-open-nbhd}}
Let $f\colon X\to\R^2$ be a map of finite distortion and $\Gamma$ a curve family in $X$. We define the weighted modulus
$$\mod_{K^{-1}}\Gamma=\inf_{g}\int_{X}\frac{g(x)^2}{K_f(x)}\,d\hm^2,$$
where the infimum is taken over all weakly admissible functions $g$ for $\Gamma$.

Let $u\colon U\to X$ be a weakly $(4/\pi)$-quasiconformal parametrization of $X$ as in Theorem \ref{theorem:wqc}. Let $G_0\subset U$ and $X_0=u(G_0)\subset X$ be as in the paragraph preceding Theorem \ref{thm:area-ineq}. Recall that $|G_0|_2=0$. We set $X':=X\setminus X_0$.

\begin{lemma}\label{lem:upper-bound-Kmodulus}
    Let $\Gamma'$ be a family of curves in $\Omega \subset X$ with $\mathcal{H}^1(|\gamma|\cap X_0)=0$ for every $\gamma\in\Gamma'$. 
    Then
    $$\mod_{K^{-1}}\Gamma'\leq 4\sqrt{2} \int_{\R^2}g(y)^2
    N(y,f,\Omega)\,dy,$$
    whenever $g$ is admissible for $\Gamma=f(\Gamma')$.
\end{lemma}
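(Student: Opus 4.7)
The plan is to build a weakly admissible test function for $\Gamma'$ by pulling $g$ back through $f$ and multiplying by $\rho_f^u$. Concretely, I would set
$$\tilde{g}(x) := g(f(x))\,\rho_f^u(x)\,\chi_{\Omega\cap X'}(x),$$
where $X' = X\setminus X_0$ as in the preamble to Theorem \ref{thm:area-ineq}. This is the natural candidate because the $L^2$ norm of $\tilde{g}$ weighted by $K_f^{-1}$ will be exactly the quantity that Theorem \ref{thm:area-ineq} controls.

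Next, I would verify weak admissibility. Let $\Gamma_0\subset\Gamma'$ collect the exceptional curves on which $f$ fails to be absolutely continuous or on which the right-hand bound of Corollary \ref{cor:gchange}(3) fails; then $\mod(\Gamma_0)=0$. For $\gamma\in\Gamma'\setminus\Gamma_0$ parametrized by arclength, $f\circ\gamma$ is rectifiable (absolute continuity of $f$ on $\gamma$ gives bounded variation on a compact interval), and the hypothesis $\mathcal{H}^1(|\gamma|\cap X_0)=0$ forces $\gamma^{-1}(X_0)$ to have one-dimensional Lebesgue measure zero (since $\gamma$ is $1$-Lipschitz), so the factor $\chi_{X'}$ is invisible along $\gamma$. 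Consequently,
$$\int_\gamma \tilde{g}\,ds = \int_\gamma (g\circ f)\,\rho_f^u\,ds \geq \int_{f\circ\gamma} g\,ds \geq 1,$$
the last step by admissibility of $g$ for $f(\Gamma')$.

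The remaining step is an energy computation. From the distortion inequality $\rho_f^u\leq K_f\rho_f^l$ (which also forces $\rho_f^u=0$ wherever $\rho_f^l=0$, so there $K_f=1$ by definition), one obtains $(\rho_f^u)^2/K_f\leq \rho_f^u\,\rho_f^l$ pointwise almost everywhere. Hence
$$\mod_{K^{-1}}\Gamma' \leq \int_X\frac{\tilde{g}(x)^2}{K_f(x)}\,d\hm \leq \int_{\Omega\cap X'} g(f(x))^2\,\rho_f^u(x)\,\rho_f^l(x)\,d\hm.$$
Applying Theorem \ref{thm:area-ineq} to $E=\Omega\cap X'\subset X\setminus X_0$ with the Borel integrand $g^2$ on $\R^2$, together with the trivial bound $N(y,f,\Omega\cap X')\leq N(y,f,\Omega)$, closes the estimate.

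The one delicate point is preventing the exceptional set $X_0$ from corrupting the admissibility check; this is exactly what the assumption $\mathcal{H}^1(|\gamma|\cap X_0)=0$ buys, by making $X_0$ invisible to every line integral along $\Gamma'$. Beyond that the argument is pure bookkeeping, combining Corollary \ref{cor:gchange}(3), the definition of $K_f$, and Theorem \ref{thm:area-ineq}; no further analytic ingredient is needed.
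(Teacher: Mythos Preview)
Your proof is correct and follows essentially the same route as the paper: define the pullback $\tilde g = (g\circ f)\cdot\rho_f^u\cdot\chi_{\Omega\cap X'}$, use Corollary~\ref{cor:gchange} together with the hypothesis $\mathcal{H}^1(|\gamma|\cap X_0)=0$ to verify weak admissibility, then apply the pointwise inequality $(\rho_f^u)^2/K_f\le\rho_f^u\rho_f^l$ and Theorem~\ref{thm:area-ineq}. Your write-up is slightly more explicit than the paper's about why $\chi_{X'}$ is invisible along $\gamma$ and about the final multiplicity bound $N(y,f,\Omega\cap X')\le N(y,f,\Omega)$, but the argument is the same.
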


\begin{proof}
    Fix an admissible $g$ for $\Gamma$, and let $g'\colon X\to\R$, 
    \[
    g'(x):=g(f(x))\cdot\rho_f^u(x)\cdot\chi_{\Omega \cap X'}(x). 
    \]
    Here, $\chi_E$ denotes the indicator function on a set $E\subset X$, i.e.,\ $\chi_E(x)=1$ if $x\in E$ and $\chi_E(x)=0$ else. For almost every $\gamma\in\Gamma'$ we have that $f$ is absolutely continuous on $\gamma$,  $\mathcal{H}^1(|\gamma|\cap X_0)=0$, and 
    \begin{align*}
        \int_{\gamma}g'\,ds=\int_{\gamma}(g\circ f)\cdot\rho_f^u\,ds\geq\int_{f\circ\gamma}g\,ds,
    \end{align*}
    see Corollary \ref{cor:gchange}.  Since $g$ is admissible for $\Gamma=f(\Gamma')$, it follows that $g'$ is weakly admissible for $\Gamma'$. Moreover,

    \begin{align*}
        \mod_{K^{-1}}\Gamma'&\leq\int_{X}\frac{g'(x)^2}{K_f(x)}\,d\hm^2=\int_{\Omega \cap X'}g(f(x))^2\cdot \rho_f^u(x)\rho_f^l(x)\,d\hm^2\\
        &\leq 4\sqrt{2}\int_{\R^2}g(y)^2\cdot N(y,f,\Omega) \,dy,
    \end{align*}
    where the last inequality follows from the area inequality, Theorem \ref{thm:area-ineq}. 
\end{proof}

\begin{lemma}\label{lem:level-set-curves} 
    Let $\varphi\in\Nloc(X,\R)$, and consider $E\subset\R$ with $|E|_1>0$ and so that each level set $\varphi^{-1}(t)$, $t \in E$, contains a non-degenerate continuum $\eta_t$. Then 
    $\mathcal{H}^1(\eta_t\cap X_0)=0$ for almost every $t\in E$.
\end{lemma}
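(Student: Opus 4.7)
The plan is to apply the Sobolev coarea inequality (Theorem~\ref{thm:coarea-Sobolev}) to $\varphi$ with the Borel test function $g=\chi_{X_0}$. A preliminary observation is that, since the level sets $\varphi^{-1}(t)$ are assumed to contain continua for $t\in E$, we may work with a continuous representative of $\varphi$, for which Theorem~\ref{thm:coarea-Sobolev} applies. The coarea inequality then yields
$$
\int\displaylimits^*_\R \mathcal H^1(\varphi^{-1}(t)\cap \mathcal A_\varphi \cap X_0) \, dt \leq \frac{4}{\pi}\int_{X_0} \rho_\varphi^u \, d\hm.
$$
Each $\eta_t$ is a non-degenerate continuum in $\varphi^{-1}(t)$, hence is contained in a non-degenerate component of the level set, so $\eta_t\subset \mathcal A_\varphi$ and in particular $\eta_t\cap X_0\subset \varphi^{-1}(t)\cap\mathcal A_\varphi\cap X_0$. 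The desired conclusion will therefore follow once the right-hand side above is shown to vanish.

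The key step is to prove $\hm(X_0)=0$, equivalently, that the weakly $(4/\pi)$-quasiconformal parametrization $u\colon U\to X$ satisfies Lusin's Condition (N) on the exceptional set $G_0$. I would derive this by combining two identities. The area formula (Theorem~\ref{thm:area-formula}) for $u$ on $U\setminus G_0$, together with $\hm(X\setminus \widehat X)=0$ (which forces $N(y,u,U\setminus G_0)=\chi_{X\setminus X_0}(y)$ for $\hm$-a.e.\ $y\in X$), yields
$$
\int_U J(\ap\md u_z)\,dz = \hm(X\setminus X_0),
$$
using also that $|G_0|_2=0$. On the other hand, for the NR22 parametrization chosen with the John-ellipse property selected in the proof of Theorem~\ref{theorem:wqc}, one also has the matching identity $\hm(X)=\int_U J(\ap\md u_z)\,dz$, so that subtracting gives $\hm(X_0)=0$.

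With $\hm(X_0)=0$ in hand, the right-hand side of the coarea inequality vanishes, so $\mathcal H^1(\varphi^{-1}(t)\cap\mathcal A_\varphi\cap X_0)=0$ for almost every $t\in\R$, and in particular for almost every $t\in E$. The inclusion $\eta_t\cap X_0\subset \varphi^{-1}(t)\cap\mathcal A_\varphi\cap X_0$ then gives $\mathcal H^1(\eta_t\cap X_0)=0$ for almost every $t\in E$, proving the lemma.

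The main obstacle is the verification of $\hm(X_0)=0$. While this is automatic for genuine quasiconformal homeomorphisms, here it depends on the specific construction of the weakly $(4/\pi)$-quasiconformal parametrization (in particular its John-ellipse property) from NR22, and is not a direct consequence of the coarea inequalities or area formula stated in the preliminaries alone.
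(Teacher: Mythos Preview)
Your argument hinges on the claim $\hm(X_0)=0$, i.e., that the weakly quasiconformal parametrization $u\colon U\to X$ satisfies Lusin's Condition~(N). This is not established in the paper, and your proposed justification does not work. The ``John-ellipse property'' invoked from \cite{NR22} is the bound $L_u(z)\le\sqrt{2}\,l_u(z)$ on the approximate metric derivative; it controls distortion, not nullity of images of null sets. Your second identity $\hm(X)=\int_U J(\apmd u_z)\,dz$ is precisely what Condition~(N) for $u$ would give, so appealing to it to deduce $\hm(X_0)=0$ is circular. The Mal\'y--Martio result you might have in mind applies to monotone $W^{1,2}$ maps with \emph{euclidean} target; here the target is a general metric surface, and no analogue is available in the paper's framework.

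The paper circumvents this issue entirely. Instead of working on $X$, it pulls everything back to $U$: set $\widehat\varphi=\varphi\circ u$ and $\widehat\eta_t=u^{-1}(\eta_t)$. The Sobolev coarea inequality on $U$ shows that $\mathcal{H}^1(\widehat\eta_t)<\infty$ for a.e.\ $t\in E$, so each such $\widehat\eta_t$ can be parametrized by a rectifiable curve $\widehat\gamma_t$, and another application of the coarea inequality shows that the family $\widehat\Gamma=\{\widehat\gamma_t\}$ has positive modulus. Consequently $u$ is absolutely continuous along $\widehat\gamma_t$ for a.e.\ $t$. Since $|G_0|_2=0$, coarea also gives $\mathcal{H}^1(\widehat\eta_t\cap G_0)=0$ for a.e.\ $t$. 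Absolute continuity of $u$ on $\widehat\gamma_t$ then transports this to $\mathcal{H}^1(\eta_t\cap X_0)=0$. The point is that Condition~(N) along \emph{curves} follows from Sobolev regularity and positive modulus, whereas Condition~(N) on \emph{area} does not.
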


\begin{proof}
Note that $\widehat{\varphi}=\varphi\circ u$ is in $\Nloc(U,\R)$. For every $t\in E$, let $\widehat{\eta_t}=u^{-1}(\eta_t)$.
Then, since $u$ is continuous and proper, $\widehat{\eta_t}$ is a non-degenerate continuum for every $t \in E$. Moreover, the coarea inequality for Sobolev functions (Theorem \ref{thm:coarea-Sobolev}) shows that $\mathcal{H}^1(\widehat{\eta_t})<\infty$ for almost every $t \in E$. For every such $t$, there is a surjective two-to-one $1$-Lipschitz curve 
\[
\widehat{\gamma_t}:[0,2\mathcal{H}^1(\widehat{\eta_t})] \to \widehat{\eta_t}, 
\]
cf. \cite{RR19}*{Proposition 5.1}. 
Let $\widehat{\Gamma}$ be the family of the curves $\widehat{\gamma_t}$, and let $g\colon U \to[0,\infty]$ be admissible for $\widehat{\Gamma}$. We apply the coarea inequality for Sobolev functions (Theorem \ref{thm:coarea-Sobolev}) and Hölder's inequality to obtain
\begin{align*}
    |E|_1&\leq\int\displaylimits^*_E\int_{\widehat{\gamma_t}}g\,ds\,dt
    \leq 2 \int\displaylimits^*_E\int_{\widehat{\eta_t}}g\,d\mathcal{H}^1\,dt
    \leq \frac{8}{\pi}\int_{\widehat{\varphi}^{-1}(E)}g\cdot\rho_{\widehat{\varphi}}^u\,d\hm^2\\
    &\leq\frac{8}{\pi}\left(\int_{\widehat{\varphi}^{-1}(E)}g^2\,d\hm^2\right)^{1/2}\left(\int_{\widehat{\varphi}^{-1}(E)}(\rho_{\widehat{\varphi}}^u)^2\,d\hm^2\right)^{1/2}.
\end{align*}
Since $\rho_{\widehat{\varphi}}^u\in\Lloc^2(U)$ and $|E|_1>0$ it follows that $\mod(\widehat{\Gamma})>0$. 
As a Sobolev function, $u$ is therefore absolutely continuous along $\widehat{\gamma_t}$ for almost every $t\in E$, see e.g.\ \cite{HKST:15}*{Lemma 6.3.1}. Moreover, for almost every $t\in E$ we have that $\mathcal{H}^1(\widehat{\eta_t}\cap G_0)=0$, since $|G_0|_2=0$. Combining these two facts shows that $\mathcal{H}^1(\eta_t\cap X_0)=0$ for almost every $t\in E$.
\end{proof}

\begin{lemma}\label{lem:positive-Kmodulus}
    Let $V \subset X$ be open and connected, and $I,J\subset V$ disjoint non-trivial continua. There are $E\subset\R$, $|E|_1>0$, and a family $\Gamma'=\{\gamma_t:t\in E\}$ satisfying
    
    \begin{enumerate}
        \item every $\gamma_t\in\Gamma'$ is a non-degenerate curve connecting $I$ and $J$ in $V$,
        \item there exists $\varphi\in\Nloc(V,\R)$ such that for every $t\in E$ the curve $\gamma_t\in\Gamma'$ has image in the level set $\varphi^{-1}(t)$, and
        \item $\mod_{K^{-1}}\Gamma'>0$.
    \end{enumerate}
\end{lemma}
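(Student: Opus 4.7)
The plan is to use the weakly $(4/\pi)$-quasiconformal parametrization $u\colon U\to X$ from Theorem \ref{theorem:wqc} to reduce the construction to a planar problem and then push everything forward to $V$. First I would shrink $V$ to an open connected subdomain still containing $I\cup J$ with compact closure in $V$ (extending $\varphi$ by zero at the end), so $\overline V$ may be assumed compact in $X$. Write $\hat V=u^{-1}(V)$, $\hat I=u^{-1}(I)$, $\hat J=u^{-1}(J)$; these are disjoint continua in the open connected planar set $\hat V\subset U$, connected because $u$ is proper with connected fibers.

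Using path-connectedness of $\hat V$, I would pick $\hat p\in\hat I$ and $\hat q\in\hat J$ so that the straight segment $[\hat p,\hat q]$ lies in $\hat V$, choosing $\hat p,\hat q$ to be $\ell$-interior points of $\hat I,\hat J$ respectively, and rotate coordinates on $\R^2$ so that $\ell(z)=z_1$ satisfies $\ell(\hat p)=\ell(\hat q)=:t_0$. A planar topology argument, relying on the openness of $\hat V$ around $[\hat p,\hat q]$ and on $t_0$ being an interior point of both $\ell(\hat I)$ and $\ell(\hat J)$, then yields an interval $E\ni t_0$ with $|E|_1>0$ such that for every $t\in E$ some vertical segment $\hat\gamma_t\subset\ell^{-1}(t)\cap\hat V$ connects $\hat I$ to $\hat J$. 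Setting $\gamma_t:=u\circ\hat\gamma_t$ gives the required curves in item~(1). For item~(2), I would push $\ell$ forward via $u$: on the set $Y:=\{x\in V:|u^{-1}(x)|=1\}$, which has full $\hm$-measure in $V$ by \cite{NR:21}*{Remark 7.2}, set $\varphi(x):=\ell(u^{-1}(x))$ and extend arbitrarily on $V\setminus Y$. A Fubini argument (using that the preimage of $V\setminus Y$ has $\hm$-measure zero in $\hat V$, hence $\mathcal H^1$-measure zero on almost every vertical line) allows me to take each $\hat\gamma_t$ inside $u^{-1}(Y)$ for almost every $t\in E$, giving $\gamma_t\subset\varphi^{-1}(t)$. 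Sobolev regularity $\varphi\in\Nloc(V,\R)$ with $\rho^u_\varphi\in L^\infty$ follows from $\ell$ being $1$-Lipschitz combined with the bound $L_u\leq\sqrt 2\,l_u$ of the NR parametrization (see the proof of Theorem \ref{thm:area-ineq}), by an inverse-composition argument in the spirit of Proposition \ref{prop:mildest}.

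For item~(3), let $g$ be admissible for $\Gamma'$. Each $|\gamma_t|$ is a non-degenerate continuum in $\varphi^{-1}(t)\cap\mathcal A_\varphi$, and as $\gamma_t$ is injective, $\int_{|\gamma_t|}g\,d\mathcal H^1\geq\int_{\gamma_t}g\,ds\geq 1$. Integrating in $t$, the Sobolev coarea inequality (Theorem \ref{thm:coarea-Sobolev}) and Cauchy--Schwarz yield
\begin{align*}
|E|_1 \leq \int_E\int_{\varphi^{-1}(t)\cap\mathcal A_\varphi}g\,d\mathcal H^1\,dt \leq \frac{4}{\pi}\left(\int_V\frac{g^2}{K_f}\,d\hm\right)^{1/2}\left(\int_V K_f(\rho^u_\varphi)^2\,d\hm\right)^{1/2}.
\end{align*}
Because $K_f\in\Lloc^1(X)$, $\overline V$ is compact, and $\rho^u_\varphi\in L^\infty(V)$, the second factor is finite. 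Rearranging and taking the infimum over admissible $g$ gives $\mod_{K^{-1}}\Gamma'>0$. The main obstacle will be the construction and Sobolev regularity of $\varphi$: since $u$ is only cell-like (not a homeomorphism), pushing $\ell$ forward is delicate on non-singleton fibers, and the bound $\rho^u_\varphi\in L^\infty$ is essential for the final Cauchy--Schwarz step because only $L^1$-integrability of $K_f$ is available.
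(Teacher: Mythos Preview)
Your approach has a genuine gap in the claim that $\rho^u_\varphi\in L^\infty(V)$. The bound $L_u\leq\sqrt 2\,l_u$ only compares the maximal and minimal stretches of $u$; it says nothing about the absolute size of $l_u$, which can tend to zero. An inverse-composition estimate in the spirit of Proposition~\ref{prop:mildest} yields at best $\rho^u_\varphi(u(z))\leq L_\ell(z)/l_u(z)=1/l_u(z)$, and this is unbounded in general. Consequently the second factor $\int_V K_f(\rho^u_\varphi)^2\,d\hm$ in your Cauchy--Schwarz step need not be finite: unwinding via the area formula for $u$ bounds it by a multiple of $\int_{\hat V}K_f(u(z))\,dz$, whereas the hypothesis $K_f\in \Lloc^1(X)$ only controls $\int_{\hat V}K_f(u(z))\,J(\apmd u_z)\,dz$. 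These differ exactly where the parametrization degenerates, and simple conformal-weight examples show the former integral can diverge while the latter is finite.

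There are secondary issues as well. Your $\varphi$, defined as $\ell\circ u^{-1}$ on singleton fibers and arbitrarily elsewhere, is generally not continuous: on a non-singleton fiber $u^{-1}(x)$ the linear function $\ell$ is typically not constant, so no continuous extension exists, while Theorem~\ref{thm:coarea-Sobolev} requires continuity. And the existence of a straight segment $[\hat p,\hat q]\subset\hat V$ with $\hat p\in\hat I$, $\hat q\in\hat J$ is not guaranteed in an arbitrary open connected planar set.

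The paper avoids all of this by working intrinsically on $X$: pick a continuous curve $\eta$ joining a point of $I$ to a point of $J$ in $V$ and set $\varphi(x)=\dist(x,|\eta|)$. This is $1$-Lipschitz on $X$, so $\lip(\varphi)\leq 1$ and the Lipschitz coarea inequality (Theorem~\ref{thm:coarea-Lipschitz}) combined with H\"older gives directly
\[
\varepsilon'\leq\frac{4}{\pi}\left(\int_VK_f\,d\hm\right)^{1/2}\left(\int_V\frac{g^2}{K_f}\,d\hm\right)^{1/2},
\]
with the first factor finite by $K_f\in \Lloc^1$. The level-set curves $\gamma_t$ connecting $I$ and $J$ for $t\in(0,\varepsilon')$ outside a null set come from the argument in \cite{Raj:17}*{Proposition 3.5}. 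No parametrization is needed, and the crucial point is that the Lipschitz choice of $\varphi$ makes the weight $\rho^u_\varphi$ bounded by $1$ automatically.
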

\begin{proof}
    Replacing $V$ with a compactly connected subdomain if necessary, we may assume that \begin{equation} \label{uga}
    \int_V  K_f(x) \, d\mathcal{H}^2(x) =K < \infty. 
    \end{equation}
    Fix points $a\in I$ and $b\in J$ and a continuous curve $\eta$ joining $a$ and $b$ in $V$. Define $\varphi\colon X\to\R$ by $\varphi(x)=\dist(x,|\eta|)$. As described in the proof of \cite{Raj:17}*{Proposition 3.5}, we find $\varepsilon'>0$, a set $E_0\subset(0,\varepsilon')$ with $\mathcal{H}^1(E_0)=0$, and for every $t\in E=(0,\varepsilon')\setminus E_0$ a rectifiable injective curve $\gamma_t$ joining $I$ and $J$ in $V$, with image in the level set $\varphi^{-1}(t)$. We set $\Gamma'=\{\gamma_t:t\in E\}$. 
    
    Let $g\colon V\to[0,\infty]$ be admissible for $\Gamma'$. We apply the coarea inequality for Lipschitz maps (Theorem \ref{thm:coarea-Lipschitz}) and Hölder's inequality to obtain
    \begin{align*}
        \varepsilon'&\leq\int_0^{\varepsilon'}\int_{\gamma_t}g\,ds\,dt\leq\frac{4}{\pi}\int_{V}g(x)K_f(x)^{-1/2}K_f(x)^{1/2}\,d\hm^2(x)\\
        &\leq\frac{4}{\pi}\left(\int_{V}K_f(x)\,d\hm^2(x)\right)^{1/2}\left(\int_{V}\frac{g(x)^2}{K_f(x)}\,d\hm^2(x)\right)^{1/2}. 
    \end{align*}
    Combining with \eqref{uga} gives
    $$\mod_{K^{-1}}\Gamma'\geq\left(\frac{\pi\varepsilon'}{4K}\right)^2>0,$$
     where we used that the estimate above holds for all admissible functions.
\end{proof}

If $Z$ is a metric surface, $G \subset Z$ a domain, and $E,F \subset \overline{G}$ disjoint sets, we denote by $\Gamma(E,F;G)$ the family of curves joining $E$ and $F$ in $\overline{G}$. 

\begin{lemma}\label{lem:g-epsilon}
    For any $\varepsilon>0$ the function $g_{\varepsilon}\colon\R^2\to[0,\infty)$ defined by
    $$g_{\varepsilon}(y)=\varepsilon\left(|y|\log\frac{1}{|y|}\log\log\frac{1}{|y|}\right)^{-1} \chi_{\mathbb{D}(0,e^{-2})}$$ 
    is admissible for $\Gamma(\{0\},\partial\D(0,e^{-2});\R^2)$ and 
    $$\int_{\R^2}g_{\varepsilon}(y)^2\log\frac{1}{|y|}\,dy\to0$$
    as $\varepsilon\to0$.
\end{lemma}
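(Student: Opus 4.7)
Plan: The goal is to verify two elementary properties of the explicitly given radial function $g_\varepsilon$. First I would note that $g_\varepsilon$ is a non-negative Borel function on $\R^2$: the denominator is strictly positive on $\D(0,e^{-2})\setminus\{0\}$, since $\log(1/|y|)>2$ and hence $\log\log(1/|y|)>\log 2>0$ there, while $g_\varepsilon\equiv 0$ outside $\D(0,e^{-2})$. So there is no issue with well-definedness.

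For admissibility, I would take any locally rectifiable curve $\gamma$ in $\Gamma(\{0\},\partial\D(0,e^{-2});\R^2)$, parametrize it by arclength, and set $r(t)=|\gamma(t)|$. Then $r$ is $1$-Lipschitz, and since $\gamma$ meets both $\{0\}$ and $\partial\D(0,e^{-2})$, continuity of $r$ forces its image to contain the full interval $[0,e^{-2}]$. Using non-negativity of $g_\varepsilon$, radial symmetry, and the one-dimensional coarea formula (applied on each rectifiable truncation $\gamma|_{\{r\geq\delta\}}$ and then letting $\delta\to 0^+$), I would obtain
\[
\int_\gamma g_\varepsilon\,ds \;\geq\; \int_0^{e^{-2}}\frac{\varepsilon\,ds}{s\log(1/s)\log\log(1/s)}.
\]
The substitution $u=\log\log(1/s)$ converts the right-hand side to $\varepsilon\int_{\log 2}^{\infty}du/u=+\infty$, which is certainly at least $1$. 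Hence $g_\varepsilon$ is admissible for $\Gamma(\{0\},\partial\D(0,e^{-2});\R^2)$.

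For the integral estimate, I would pass to polar coordinates and use radial symmetry to get
\[
\int_{\R^2}g_\varepsilon(y)^2\log\frac{1}{|y|}\,dy \;=\; 2\pi\varepsilon^2\int_0^{e^{-2}}\frac{dr}{r\log(1/r)(\log\log(1/r))^2}.
\]
The substitution $u=\log(1/r)$ turns the remaining integral into $\int_2^{\infty}du/(u(\log u)^2)$, and a further substitution $v=\log u$ reduces it to $\int_{\log 2}^\infty dv/v^2 = 1/\log 2$. Consequently the full expression equals $2\pi\varepsilon^2/\log 2$, which tends to $0$ as $\varepsilon\to 0$.

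The argument is a routine calculation tuned precisely so that the iterated logarithm absorbs the $\log(1/|y|)$ weight in the integral; the only mild subtlety is the handling of $\gamma$ near the point $0$, resolved by the standard truncation mentioned above. There is no significant obstacle here.
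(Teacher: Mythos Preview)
Your proof is correct and follows essentially the same approach as the paper: both reduce admissibility to the divergence of $\int_0^{e^{-2}}(s\log(1/s)\log\log(1/s))^{-1}\,ds$ and handle the weighted $L^2$-integral via polar coordinates and the antiderivative $(\log\log(1/s))^{-1}$. The only minor variation is that you obtain the line-integral lower bound through the one-dimensional coarea formula for $r(t)=|\gamma(t)|$, while the paper uses the estimate $|\gamma(t)|\leq t$ directly; your route is in fact slightly cleaner, since it avoids any monotonicity check on the radial profile $s\mapsto s\log(1/s)\log\log(1/s)$.
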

\begin{proof}
   Fix $\gamma\in\Gamma(\{0\},\partial\D(0,e^{-2});\R^2)$. We may assume that $\gamma\colon[0,\ell(\gamma)]\to\R^2$ is parametrized by arclength and $\gamma(0)=0$. Then $\ell(\gamma)\geq e^{-2}$ and $|\gamma(t)|\leq t$ for every $0\leq t\leq\ell(\gamma)$. We compute
    \begin{align*}
        \int_{\gamma}g_1\,ds&=\int_0^{\ell(\gamma)}g_1(\gamma(t))\,dt\\
        &=\int_0^{\ell(\gamma)}\left(|\gamma(t)|\log\frac{1}{|\gamma(t)|}\log\log\frac{1}{|\gamma(t)|}\right)^{-1}\,dt\\
        &\geq\int_0^{e^{-2}}\left(t\log\frac{1}{t}\log\log\frac{1}{t}\right)^{-1}\,dt=\infty,
    \end{align*}
    where the last equality follows since $$\frac{d}{ds}\log\log\log \frac{1}{s}=-\left(s\log\frac{1}{s}\log\log\frac{1}{s}\right)^{-1}.$$
    Thus, $g_{\varepsilon}=\varepsilon\cdot g_1$ is admissible for $\Gamma(\{0\},\partial\D(0,e^{-2});\R^2)$ for any $\varepsilon>0$. 
    
    In order to prove the second claim we use polar coordinates and compute
    \begin{align*}
        \int_{\R^2} g_{\varepsilon}(y)^2\log\frac{1}{|y|}\,dy&=\varepsilon^2\int_{\R^2}\left(|y|^2\log\frac{1}{|y|}\left(\log\log\frac{1}{|y|}\right)^2\right)^{-1}\chi_{\D(0,e^{-2})}\,dy\\
        &=\varepsilon^2\int_{0}^{2\pi}\int_0^{e^{-2}}\left(r\log\frac{1}{r}\left(\log\log\frac{1}{r}\right)^2\right)^{-1}\,dr\,d\varphi.
    \end{align*}
    The last term converges to $0$ as $\varepsilon\to0$ since $$\frac{d}{ds}\left(\log\log \frac{1}{s}\right)^{-1}=\left(s\log\frac{1}{s}\left(\log\log\frac{1}{s}\right)^2\right)^{-1}.$$
    The second claim follows. 
\end{proof}

We are now able to prove Proposition \ref{prop:discr-open-nbhd}. Let $V_0$ be the $x_0$-component of $B(x_0,s)$. Denote the $x_0$-component of $f^{-1}(f(x_0)) \cap V_0$ by $J$. We may assume that $V_0 \setminus f^{-1}(f(x_0)) \neq \emptyset$, since otherwise there is nothing to prove. Towards contradiction, assume that $J$ is a non-trivial continuum. Fix another non-trivial continuum $I \subset V_0 \setminus f^{-1}(f(x_0))$. 

By scaling and translating the target we may assume that $f(x_0)=0$, $f(I) \cap \mathbb{D}(0,e^{-2})=\emptyset$, and that the constant $r_0$ in Condition \eqref{ineq:multiplicity} satisfies $r_0\geq e^{-2}$.  Let $\Gamma'$ be the curve family from Lemma \ref{lem:positive-Kmodulus}. Note that $\Gamma=f(\Gamma')$ is a subfamily of $\Gamma(\{0\},\partial\D(0,e^{-2});\R^2)$. Hence, we know from Lemma \ref{lem:g-epsilon} that for any $\varepsilon>0$ the function $ g_{\varepsilon}$ is admissible for $\Gamma$. Lemma \ref{lem:level-set-curves} implies that Lemma \ref{lem:upper-bound-Kmodulus} can be applied to our setting and thus
$$\mod_{K^{-1}}\Gamma'\leq 4 \sqrt{2}\int_{\R^2} g_{\varepsilon}(y)^2N(y,f,B(x_0,s))\,dy.$$

Since $g_\varepsilon$ is symmetric with respect to the origin, combining Assumption \eqref{ineq:multiplicity} with polar coordinates yields 
\begin{align*} 
\int_{\R^2} g_{\varepsilon}(y)^2N(y,f,B(x_0,s))\,dy= 
\int_0^{e^{-2}} r g_{\varepsilon}(r)^2 \int_0^{2\pi} N(re^{i\theta},f,B(x_0,s)) \, d\theta \, dr \\
\leq C \int_0^{e^{-2}} r g_{\varepsilon}(r)^2 \log \frac{1}{r} \, dr =C \int_{\R^2} g_{\varepsilon}(y)^2\log\frac{1}{|y|}\,dy. 
\end{align*}

By the second part of Lemma \ref{lem:g-epsilon}, the right hand integral converges to $0$ as $\varepsilon$ goes to $0$. Thus, $\mod_{K^{-1}}\Gamma'=0$, contradicting Lemma \ref{lem:positive-Kmodulus}. The proof is complete. 

\subsection{Proof of Proposition \ref{prop:ineq-multiplicity}}
Let $x_0$ and $s$ be as in the statement. We may assume that $f(x_0)=0$. We first show that $f^{-1}(y)$ is totally disconnected for most points $y \in f(X)$ around $0$.  

\begin{lemma}\label{lem:2.1} 
Let $\beta'$ be the set of those $0\leq \theta < 2\pi$ for which there is $R_\theta>0$ so that $f^{-1}(R_\theta e^{i\theta})$ contains a non-degenerate continuum. Then $|\beta'|_1=0$. 
\end{lemma}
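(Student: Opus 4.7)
The plan is to argue by contradiction. Suppose $|\beta'|_1>0$, and for each $\theta\in\beta'$ fix a non-degenerate continuum $\eta_\theta\subset f^{-1}(R_\theta e^{i\theta})$ via a measurable selection applied to the map $\theta\mapsto \eta_\theta$ valued in the Polish space of non-degenerate compact subsets of $\bar B(x_0,2s)$ (Hausdorff metric). By applying pigeonhole several times, pass to a positive-measure subset $\beta''\subset\beta'$ on which $R_\theta\in[a,b]$ for some $0<a<b<\infty$ (using compactness of $f(\bar B(x_0,2s))$), $\diam\eta_\theta\geq d_0>0$ for a fixed $d_0$, and each $\eta_\theta$ contains a non-degenerate sub-continuum lying inside a single simply connected open set $V\subset X$ whose closure is compact and disjoint from $f^{-1}(0)$. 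The existence of such a $V$ is guaranteed by a finite subcover argument since the $\eta_\theta$'s are bounded away from $f^{-1}(0)$ (because $R_\theta\geq a>0$).

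Next I would apply the area inequality (Theorem \ref{thm:area-ineq}) to $f$ with the weight $g=\chi_A$, where
\[
A=\{R_\theta e^{i\theta}:\theta\in\beta''\}\subset\mathbb R^2.
\]
By the measurable selection, $A$ is the image of the measurable set $\beta''$ under the measurable map $\theta\mapsto R_\theta e^{i\theta}$, so $A$ is analytic, and since it is contained in the graph of a measurable function over $\beta''$ it has Hausdorff dimension at most one. In particular $|A|_2=0$, so the right hand side of the area inequality vanishes. This gives
\[
\int_{f^{-1}(A)\cap\bar V}\rho_f^u\cdot\rho_f^l\,d\mathcal H^2\leq 4\sqrt 2\int_A N(y,f,\bar V)\,dy=0.
\]
Since $f$ is of finite distortion, $\rho_f^u\leq K_f\cdot\rho_f^l$ with $K_f<\infty$ a.e., so the vanishing of the product forces $\rho_f^u=\rho_f^l=0$ $\mathcal H^2$-a.e.\ on $f^{-1}(A)\cap\bar V$, and in particular on $\bigcup_{\theta\in\beta''}\eta_\theta$.

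To reach a contradiction, I would combine this vanishing with the positivity of a $K^{-1}$-weighted modulus. Fix $\theta_0\in\beta''$ and a disjoint continuum $J\subset V$, and let $\Gamma'$ be the family constructed in Lemma \ref{lem:positive-Kmodulus} joining $\eta_{\theta_0}$ and $J$ in $V$, so that $\mod_{K^{-1}}\Gamma'>0$. The family $f(\Gamma')$ consists of curves emanating from the point $R_{\theta_0}e^{i\theta_0}\in A$. One then constructs an admissible function for $f(\Gamma')$ analogous to $g_\varepsilon$ of Lemma \ref{lem:g-epsilon} but centered at the starting point, whose associated upper bound
\[
\mod_{K^{-1}}\Gamma'\leq 4\sqrt 2 \int_{\mathbb R^2}g_\varepsilon(y)^2\,N(y,f,V)\,dy
\]
from Lemma \ref{lem:upper-bound-Kmodulus} tends to zero as $\varepsilon\to 0$, using the vanishing of $\rho_f^u$ on $f^{-1}(A)$ to restrict the effective support of $N$ to the complement of $f^{-1}(A)$, together with the integrability of $g_\varepsilon^2\log(1/|y|)$. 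This contradicts the positivity of $\mod_{K^{-1}}\Gamma'$.

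The main obstacle is the final step: the vanishing $\rho_f^u=0$ on $\bigcup\eta_\theta$ is by itself vacuous if this set has $\mathcal H^2$-measure zero, and converting this $\mathcal H^2$-a.e.\ statement into an effective reduction of the right hand side of the modulus upper bound is the delicate point. This requires combining the area inequality argument carefully with the Sobolev coarea inequality applied to the branch $v=\arg f$ on $V$ and Lemma \ref{lem:level-set-curves}, so that the contribution of $f^{-1}(A)$ to $\int g_\varepsilon^2 N\,dy$ can genuinely be dropped, enabling the $\varepsilon\to 0$ limit to close.
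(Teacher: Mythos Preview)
Your reduction to a positive-measure subset $\beta''$ with $R_\theta\in[a,b]$ and $\diam\eta_\theta\geq d_0$ is fine, and you correctly identify that $\rho_f^u=\rho_f^l=0$ on the continua (though the area inequality is overkill here: since $f\circ\gamma_\theta$ is constant, the lower-gradient inequality $\int_{\gamma_\theta}\rho_f^l\,ds\leq\ell(f\circ\gamma_\theta)=0$ gives this directly once you know the curve family has positive modulus).

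The genuine gap is the contradiction step. Fixing a \emph{single} $\theta_0$ and running the $g_\varepsilon$/modulus argument of Proposition~\ref{prop:discr-open-nbhd} cannot work: the upper bound $\mod_{K^{-1}}\Gamma'\leq 4\sqrt{2}\int g_\varepsilon^2\,N(\cdot,f,V)\,dy$ only tends to zero if you control $N$ by something like $C\log(1/|y-p_0|)$ near $p_0=R_{\theta_0}e^{i\theta_0}$, which is exactly the content of Proposition~\ref{prop:ineq-multiplicity}---and that proposition \emph{uses} Lemma~\ref{lem:2.1}. So the argument is circular. Your proposed patch, dropping the contribution of $f^{-1}(A)$, is irrelevant: $A$ has Lebesgue measure zero in the target, so it contributes nothing to $\int g_\varepsilon^2 N\,dy$ anyway; the obstruction is the multiplicity on a full neighborhood of $p_0$, about which the vanishing of $\rho_f^u$ on the thin set $\bigcup\eta_\theta$ says nothing.

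The paper's route is much more direct and avoids modulus entirely. The crucial observation is that each $\eta_\theta$ lies in a \emph{level set} of $\varphi=\arg f$, so the Sobolev coarea inequality (Theorem~\ref{thm:coarea-Sobolev}) applied to $\varphi$ with $g$ the indicator of $F=\{|f|\geq a,\ \rho_f^u=0\}$ gives
\[
0<d_0\,|\beta''|_1\leq\int^*_{\beta''}\mathcal H^1(\eta_\theta)\,d\theta\leq\frac{4}{\pi}\int_F\frac{\rho_f^u}{|f|}\,d\mathcal H^2=0,
\]
an immediate contradiction. You mention the coarea inequality for $\arg f$ in your last sentence, but you are aiming it at the wrong target: it is not a tool to repair the multiplicity integral, it \emph{is} the contradiction.
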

\begin{proof}
    We define 
    \[
    \varphi\colon X \setminus f^{-1}(0)\to \mathbb{S}^1, \quad \varphi(x)=\frac{f(x)}{|f(x)|}, 
    \]
    and note that $\rho_f^u/|f|$ is a weak upper gradient of $\varphi$. Towards a contradiction we assume that $|\beta'|_1>0$. Then there are $\delta,\varepsilon>0$ and a set $\beta_{\delta}'\subset\beta'$, $|\beta_{\delta}'|_1>0$, such that for every $\theta\in\beta_{\delta}'$ there exists $R_{\theta}\in[\varepsilon,1]$ for which 
    $f^{-1}(R_{\theta}e^{i\theta})$ contains a continuum $E_\theta$ with 
    $\mathcal{H}^1(E_\theta)\geq\delta$. As in the proof of Lemma \ref{lem:level-set-curves}, we see that almost every 
    $\theta \in\beta_{\delta}'$ the continuum $E_\theta$ is the image of a rectifiable curve $\gamma_\theta$, and the modulus of the family of such curves is positive. By the definition of lower gradients and since $f\circ \gamma_\theta$ is constant by construction, we then have that $\rho^l_f=0$ almost everywhere in  $$E=\bigcup_{\theta\in\beta_\delta'} E_\theta. $$ 
    Furthermore, since $f$ has finite distortion, also $\rho^u_f=0$ almost everywhere in $E$. Let 
    \[
    F=\{x \in X: |f(x)| \geq \varepsilon, \,\rho^u_f(x)=0\} \supset E. 
    \]
    We apply the Sobolev coarea inequality (Theorem \ref{thm:coarea-Sobolev}) to compute
    \begin{align*}
        0<\delta |\beta_{\delta}'|_1\leq\int\displaylimits^*_{\beta_\delta'}\mathcal{H}^1(E_\theta)\,d\theta\leq \frac{4}{\pi}\int_F\frac{\rho_f^u}{|f|}\,d\hm^2=0,  
    \end{align*}
    a contradiction. The proof is complete. 
\end{proof}

\begin{lemma}\label{lem:2.2}
    Let $\beta'$ be the set in Lemma \ref{lem:2.1}. There exists $\beta\supset\beta'$ with 
    $|\beta|_1=0$, and an open $\Omega'\subset X$, such that 
    \begin{enumerate}
        \item $f|_{\Omega'}$ is a local homeomorphism, and
        \item if $V=\{te^{i\theta}:t>0,\theta\in\beta\}$, then $\Omega'\supset X\setminus f^{-1}(V)$.
    \end{enumerate}
\end{lemma}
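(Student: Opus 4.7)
My strategy is to apply the Titus-Young theorem (\cite{TY62}, \cite{Ric93}*{Lemma VI.5.6}): a continuous, light, sense-preserving map between orientable $2$-manifolds is automatically open and discrete. The key input is Lemma \ref{lem:2.1}, which guarantees that $f^{-1}(Re^{i\theta})$ is totally disconnected for every $\theta \notin \beta'$ and every $R>0$.

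I first enlarge $\beta'$ to a null set $\beta_1 \subset [0, 2\pi)$ such that $V_1 := \{te^{i\theta}: t>0,\ \theta\in\beta_1\}\cup\{0\}$ is closed in $\R^2$, so that $W := X \setminus f^{-1}(V_1)$ is open. For $x \in W$, we have $f(x) \neq 0$ and $\arg f(x) \notin \beta_1 \supset \beta'$, so by Lemma \ref{lem:2.1} the fiber $f^{-1}(f(x))$ is totally disconnected. Thus $f|_W$ is light, continuous, and sense-preserving, and Titus-Young gives that $f|_W$ is open and discrete.

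For such a map between orientable $2$-manifolds, the branch set $B_f := \{x \in W : f \text{ is not a local homeomorphism at } x\}$ is discrete in $W$ (topological Sto\"ilow factorization, see \cite{Ric93}), so $f(B_f)$ is at most countable. Let $\beta_0 := \{\arg y : y \in f(B_f) \setminus \{0\}\}$, $\beta := \beta_1 \cup \beta_0$,
\[
V := \{te^{i\theta}: t>0,\ \theta\in\beta\}, \qquad \Omega' := W \setminus B_f.
\]
Then $\beta$ is null and contains $\beta'$, $\Omega'$ is open since $B_f$ is discrete in $W$, and on $\Omega'$ the local degree of $f$ equals $1$, so $f|_{\Omega'}$ is a local homeomorphism, giving (1). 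For (2), if $x \in X \setminus f^{-1}(V)$ has $f(x) \neq 0$ then $\arg f(x) \notin \beta$, whence $x \in W$ and $x \notin B_f$, so $x \in \Omega'$.

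The principal obstacle is the first enlargement step, since $\beta'$ may be dense in $[0, 2\pi)$ and so cannot be enclosed in a closed null set simply by taking the closure; a careful construction is needed, perhaps using that only countably many ``directions'' are ever relevant for the compact ball $\overline{B}(x_0, 2s)$, or by locally reducing to closed null pieces. A minor secondary point is that $f^{-1}(0) \subset X \setminus f^{-1}(V)$ formally but need not lie in the $\Omega'$ I have constructed; this is immaterial for the application to Proposition \ref{prop:ineq-multiplicity}, which only uses values $r > 0$.
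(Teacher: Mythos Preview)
Your approach has a genuine and fatal gap at the very point you flag as the ``principal obstacle''. If $\beta'$ is dense in $[0,2\pi)$ --- and nothing proved so far excludes this --- then \emph{every} closed set containing $\beta'$ is all of $[0,2\pi)$, so no enlargement $\beta_1\supset\beta'$ with $|\beta_1|_1=0$ and $V_1$ closed exists. Your suggested fixes (``only countably many directions are relevant'', ``locally reducing to closed null pieces'') do not help: for a fixed compact set there can still be uncountably many angles $\theta$ carrying a nondegenerate level continuum, and any open neighbourhood of $f(x)$ may meet $V'$ in a dense set of rays. Consequently there is in general no open set $W\subset X$ on which $f$ is light, and the Titus--Young theorem cannot be applied as you intend.

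The paper avoids this problem by not seeking an open set on which $f$ is globally light. Instead it works pointwise and uses the analytic input of Proposition~\ref{prop:discr-open-nbhd}: for each $x$ with $f(x)\notin V'$, the fibre $f^{-1}(f(x))$ is totally disconnected, so one can trap $x$ in a Jordan domain $U_x$ with $f(x)\notin f(\partial U_x)$; degree theory then bounds $N(z,f,U_x)$ by $\deg(f(x),f,U_x)$ for almost every nearby $z$, which gives condition~\eqref{ineq:multiplicity} at every point of $U_x$. Proposition~\ref{prop:discr-open-nbhd} (not Titus--Young applied to an open light restriction) now forces $f|_{U_x}$ to be light, hence open and discrete. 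Only \emph{after} this does one remove the (countable) branch set and pass to $\Omega'$. The essential difference is that the paper substitutes the quantitative multiplicity estimate of Proposition~\ref{prop:discr-open-nbhd} for the topological openness of the complement of the bad rays; this is precisely what is needed when $\beta'$ is dense. Your observation about $f^{-1}(0)$ is correct but, as you note, immaterial for the application in Lemma~\ref{lem:2.3}.
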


\begin{proof}
    Set $V'=\{te^{i\theta}:\theta\in\beta',t>0\}$. Let $y\in f(X)\setminus V'$ and $x\in f^{-1}(y)$. Then, since $\{x\}$ is a component of $f^{-1}(y)$, there is a Jordan domain $\widetilde{U}_x$ in $X$ such that $x\in \widetilde{U}_x$ and $y\notin f(\partial\widetilde{U}_x)$. Let $W_x$ be the $y$-component of $\R^2\setminus f(\partial\widetilde{U}_x)$ and $U_x$ the $x$-component of $f^{-1}(W_x)$. It follows that $f(\partial U_x)\subset\partial W_x$. Indeed, otherwise there is a point $a\in\partial U_x$ with $f(a)\in W_x$ and therefore there exists a neighbourhood $Y$ of $f(a)$ in $W_x$, but the $a$-component of $f^{-1}(Y)$ is not contained in $U_x$, which is a contradiction. 
    
    The assumption that $f$ is sense-preserving now implies $f(\partial U_x)=\partial W_x$. Using basic degree theory, we conclude that $f^{-1}(z)$ has at most $\deg(y,f,U_x)$ 
    components in $U_x$ for every $z \in W_x$. Furthermore, arguing as in the proof of Lemma \ref{lem:2.1} we see that for almost every such $z$ all of these components are points. In other words, 
    $$
    N(z,f,U_x)\leq \deg(y,f,U_x) < \infty 
    $$
    for almost every $z \in W_x$. In particular, every $x \in U_x$ satisfies the conditions in Proposition \ref{prop:discr-open-nbhd}, and therefore $f|_{U_x}$ is open and discrete. 
    
    We have established the following.
    \begin{enumerate}[label=(\roman{enumi})]
        \item If $y\in f(X)\setminus V'$ and $x\in f^{-1}(y)$, then $x$ has a neighbourhood $U_x$ such that $f|_{U_x}$ is open and discrete. \label{claim:1}
    \end{enumerate}
    We define $$\widehat{\Omega}=\{x\in X:x\text{-component of }f^{-1}(f(x))\text{ is } \{x\}\}.$$ Note that if $x \in \widehat{\Omega}$, then there exists a neighbourhood $Y$ of $f(x)$ such that the closure of the $x$-component of $f^{-1}(Y)$ is compact. As above, we find a neighbourhood $U_x$ of $x$ such that $f|_{U_x}$ is open and discrete. In particular, $\widehat{\Omega}$ is open. Moreover, it follows from \ref{claim:1} that $\widehat{\Omega}\supset X \setminus f^{-1}(V')$. We have shown that
    \begin{enumerate}[label=(\roman{enumi})]\setcounter{enumi}{1}
        \item $\widehat{\Omega}$ is open, $f|_{\widehat{\Omega}}$ is open and discrete, and $\widehat{\Omega}\supset X \setminus f^{-1}(V')$. \label{claim:2}
    \end{enumerate}
    Denote by $\mathcal{B}_f$ the branch set of $f|_{\widehat{\Omega}}$, i.e., the set of points where 
    $f|_{\widehat{\Omega}}$ fails to be locally invertible, and define $$\beta''=\{0\leq\theta<2\pi:Re^{i\theta}\in f(B_f)\text{ for some }R>0\}.$$ Recall that $\mathcal{B}_f$ is closed and countable, thus $\beta''$ is countable. It follows from Lemma \ref{lem:2.1} and \ref{claim:2} that the sets $\Omega'=\widehat{\Omega}\setminus\mathcal{B}_f$ and $\beta=\beta'\cup\beta''$ possess the desired properties.
\end{proof}

\begin{lemma}\label{lem:2.3}
    Let $m\in\N$, $0<r<e^{-2}$, and assume that $\overline{B}(x_0,2s)$ is compact and satisfies  $f(\overline{B}(x_0,2s)) \subset \mathbb{D}(0,1)$. If 
    $$
    E_m=\{0\leq\theta<2\pi:N(re^{i\theta},f,B(x_0,s))=m\},
    $$ then
    $$
    m|E_m|_1\leq \frac{64 \sqrt{2}}{\pi s^2}\int_{F_m}K_f\,d\hm^2\cdot\log\frac{1}{r},
    $$
    where $F_m=\{x\in X:\arg(f(x))\in E_m\}$.
\end{lemma}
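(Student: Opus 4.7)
The plan is to lift radial segments at the $m$ preimages of $re^{i\theta}$ and combine the Sobolev coarea inequality with Cauchy--Schwarz and the area inequality of Theorem \ref{thm:area-ineq}. We may assume $f(x_0)=0$. Fix $\theta\in E_m\setminus\beta$, with $\beta$ the measure-zero set from Lemma \ref{lem:2.2}, and let $x_1^\theta,\dots,x_m^\theta\in B(x_0,s)$ be the $m$ distinct preimages of $re^{i\theta}$, all lying in $\Omega'$. Since the ray $\{te^{i\theta}:t>0\}$ avoids $V$, its preimage is contained in $\Omega'$ where $f$ is a local homeomorphism, so path lifting yields unique maximal lifts $\tilde\sigma_j^\theta\colon[r,T_j^\theta)\to\Omega'$ of $t\mapsto te^{i\theta}$ starting at $x_j^\theta$. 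Compactness of $\overline B(x_0,2s)$ and $f(\overline B(x_0,2s))\subset\mathbb D(0,1)$ force $|f|\leq t^*<1$ on $\overline B(x_0,2s)$; since $|f(\tilde\sigma_j^\theta(t))|=t$, each lift must exit $\overline B(x_0,2s)$ at a first time $t_j^\theta\in(r,t^*]$. The restricted lift $\tilde\sigma_j^\theta|_{[r,t_j^\theta]}$ is then an embedded rectifiable curve in $\overline B(x_0,2s)\cap F_m$ connecting $B(x_0,s)$ to $\partial B(x_0,2s)$, hence of $\mathcal H^1$-measure at least $s$; the $m$ lifts for a given $\theta$ are pairwise disjoint by uniqueness of path lifting.

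Let $F_m^*\subset F_m\cap\overline B(x_0,2s)$ denote the union of all these sublifts. I would apply the Sobolev coarea inequality (Theorem \ref{thm:coarea-Sobolev}) to (a real-valued branch of) $v=\arg f$, whose weak upper gradient is $\rho_f^u/|f|$; for every $\theta\in E_m\setminus\beta$ the level set $v^{-1}(\theta)\cap F_m^*$ contains the $m$ disjoint non-degenerate lifts and therefore has $\mathcal H^1$-measure at least $ms$. This gives
\[
ms|E_m|_1 \leq \frac{4}{\pi}\int_{F_m^*}\frac{\rho_f^u}{|f|}\,d\hm.
\]
Because $f$ has finite distortion, $(\rho_f^u)^2=K_f\,\rho_f^u\rho_f^l$ almost everywhere, and Cauchy--Schwarz yields
\[
\int_{F_m^*}\frac{\rho_f^u}{|f|}\,d\hm \leq \Bigl(\int_{F_m}K_f\,d\hm\Bigr)^{1/2}\Bigl(\int_{F_m^*}\frac{\rho_f^u\rho_f^l}{|f|^2}\,d\hm\Bigr)^{1/2}.
\]
For the second factor Theorem \ref{thm:area-ineq} applies with $g(y)=|y|^{-2}$ and $E=F_m^*$: disjointness gives $N(y,f,F_m^*)\leq m$, supported in the sector $\{te^{i\theta}:\theta\in E_m\setminus\beta,\ r\leq t\leq 1\}$, and polar-coordinate integration gives
\[
\int_{\mathbb R^2}\frac{N(y,f,F_m^*)}{|y|^2}\,dy \leq m|E_m|_1\log\frac{1}{r}.
\]
Squaring the combined chain of inequalities and dividing by $ms^2|E_m|_1$ yields the desired bound.

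The principal difficulty I expect is the coarea step, where Theorem \ref{thm:coarea-Sobolev} is formally stated for $\mathbb R$-valued functions while $\arg f$ is $\mathbb S^1$-valued. I plan to address this by restricting $F_m^*$ to a region on which a real-valued continuous branch of $\arg f$ exists---this can be arranged by cutting along a radius $\theta_0\in[0,2\pi)\setminus(E_m\setminus\beta)$, using that $F_m^*$ lies in a compact annulus of $\mathbb R^2$ away from the origin---or by covering $E_m$ by two arcs of angular width $<2\pi$ and summing the resulting coarea estimates. A secondary point is ensuring that each maximal lift $\tilde\sigma_j^\theta$ really cannot fail to reach $\partial B(x_0,2s)$: Lemma \ref{lem:2.2} rules out failures at interior points (since the ray $\{te^{i\theta}\}$ is outside $V$), and the strict inclusion $f(\overline B(x_0,2s))\subset\mathbb D(0,1)$ forces exit via the spatial boundary.
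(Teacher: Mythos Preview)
Your proof is essentially the paper's: lift radial rays via the local homeomorphism of Lemma~\ref{lem:2.2}, bound the $\mathcal H^1$-measure of level sets of $\arg f$ from below by $ms$, apply the Sobolev coarea inequality together with Cauchy--Schwarz, and finish with the area inequality (Theorem~\ref{thm:area-ineq}) in polar coordinates. The one technical point you skip and the paper makes explicit is that Theorem~\ref{thm:area-ineq} requires $E\subset X\setminus X_0$; the paper invokes Lemma~\ref{lem:level-set-curves} on the family $\{\gamma_\theta^j\}$ to obtain $\mathcal H^1(|\gamma_\theta^j|\cap X_0)=0$ for almost every $\theta$, so that the area inequality applies on the union of lifts---conversely, the $\mathbb S^1$-versus-$\mathbb R$ issue you flag for the coarea step is not addressed in the paper.
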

\begin{proof}
    We assume $|E_m|_1>0$, otherwise there is nothing to show. Let $\beta$ and $\Omega'$ be as in Lemma \ref{lem:2.2}. We set $E_m'=E_m\setminus\beta$ and note that $|E_m'|_1=|E_m|_1$ since $|\beta|_1=0$. We also denote 
    $$
    F'_m=\{x\in X:\arg(f(x))\in E'_m\} \subset F_m. 
    $$
    Fix $\theta\in E_m'$, then
    $$f^{-1}(\{te^{i\theta}:t\geq r\})\subset\Omega'.$$ 
    We can therefore apply path lifting of local homeomorphisms to curves $I_\theta=\{te^{i\theta}:r\leq t \leq 1\}$ as follows: if $\{x_1,...,x_m\}=f^{-1}(re^{i\theta})\cap B(x,s)$ then for every $j\in\{1,...,m\}$ there exists a maximal lift $\gamma_\theta^j$ of $I_\theta$ starting at $x_j$, see \cite{Ric93}*{Theorem II.3.2}. Note that if $\varphi\colon X\to[0,2\pi)$ is defined by $\varphi(x)=\arg(f(x))$, then the image of each $\gamma_\theta^j$ is contained in the level set $\varphi^{-1}(\theta)$. 
    
    Since $\overline{B}(x,2s)$ is compact and $f(\overline{B}(x,2s)) \subset \mathbb{D}(0,1)$, every curve $\gamma_\theta^j$ connects $B(x,s)$ and $X\setminus B(x,2s)$, and so  $\mathcal{H}^1(|\gamma_\theta^j|)\geq s$. Moreover, $f|_{|\gamma_\theta^j|}$ is injective. It follows that  
    \begin{equation} \label{kasha}
    s\cdot m \leq \sum_{j=1}^m \mathcal{H}^1(|\gamma_\theta^j|) \leq \mathcal{H}^1(\{x\in X:\arg(f(x))=\theta\})
    \end{equation}  
    for every $\theta \in E'_m$. 
    
    We combine \eqref{kasha} with the Sobolev coarea inequality (Theorem \ref{thm:coarea-Sobolev}) and Hölder's inequality to compute
    \begin{align*}
        s\cdot m \cdot |E_m|_1&=s \cdot m \cdot |E_m'|_1\\
        &\leq\int_{E_m'}\mathcal{H}^1(\{x\in X:\arg(f(x))=\theta\})\,d\theta\\
        &\leq\frac{4}{\pi}\int_{F_m}\frac{\rho_f^u}{|f|}\,d\hm^2\leq\frac{4}{\pi}\int_{F_m}K_f^{1/2}\cdot \frac{(\rho_f^u \cdot \rho_f^l)^{1/2}}{|f|}\,d\hm^2\\
        &\leq\frac{4}{\pi}\left(\int_{F_m}K_f\,d\hm^2\right)^{1/2}\bigg(\underbrace{\int_{F'_m}\frac{\rho_f^u \cdot \rho_f^l}{|f|^2}\,d\hm^2}_{=:I}\bigg)^{1/2}.
    \end{align*}
    For each $j\in\{1,...,m\}$ we define the curve family
    $$\Gamma'_j=\{\gamma_\theta^j:t\in E_m'\}. $$
    Lemma \ref{lem:level-set-curves} applied to $\Gamma_j'$ shows that $\mathcal{H}^1(|\gamma_\theta^j|\cap X_0)=0$ for almost every $\theta\in E_m'$ and every $j\in\{1,...,m\}$, where $X_0$ is as in Theorem \ref{thm:area-ineq}. Hence, if 
    $$
    F''_m=\{x \in X: \, x\in|\gamma_\theta^j|\text{ for some }\theta\in E_m'\text{ and }1\leq j\leq m\} \supset F'_m, 
    $$
    then $\hm^2(F''_m \cap X_0)=0$ and $N(y,f,F''_m)\leq m$ for every $y\in\R^2$. By the area inequality (Theorem \ref{thm:area-ineq}) and polar coordinates, 
    \begin{align*}
        I\leq 4 \sqrt{2}\int_{E_m}\int_r^1 \frac{N(se^{i\theta},f,F''_m)}{s}\,ds \, d\theta \leq 4\sqrt{2}\cdot |E_m|_1 \cdot m \cdot \log\frac{1}{r}. 
    \end{align*}
    The lemma follows by combining the estimates. 
\end{proof}

    Proposition \ref{prop:ineq-multiplicity} follows from Lemma \ref{lem:2.3}: notice that by scaling we may assume that $f(\overline{B}(x_0,2s)) \subset \mathbb{D}(0,1)$, so that the conditions of Lemma \ref{lem:2.3} are satisfied. Recall that the sets $F_m$ are pairwise disjoint. Therefore, summing the estimate in Lemma \ref{lem:2.3} over $m$ gives 
    \begin{align*}
        \int_0^{2\pi}N(re^{i\theta},f,B(x_0,s))\,d\theta&=\sum_{m=1}^{\infty}m|E_m|_1\\
        &\leq C\log\frac{1}{r}\sum_{m=1}^{\infty}\int_{F_m}K_f(x)\,d\hm^2\\
        &\leq C\log\frac{1}{r}\int_{X}K_f(x)\,d\hm^2. 
    \end{align*}
    We may replace $X$ with a compactly contained subdomain if necessary to guarantee that $K_f$ is integrable. 
    Proposition \ref{prop:ineq-multiplicity} follows. 


\section{Regularity of the inverse} \label{sec:regul}
In this section we study the regularity of the inverse of a mapping of finite distortion and prove Theorem \ref{thm:inverse-Sobolev}. Let $f\in\Nloc(X,\Omega')$ be a homeomorphism with $K_f\in\Lloc^1(X)$, where $\Omega' \subset \R^2$. We set $\phi=f^{-1}\colon\Omega'\to X$ and define $\psi\colon\Omega'\to[0,\infty]$ by $$\psi(y)=\frac{1}{\rho_f^l(\phi(y))}.$$ 

\begin{lemma}\label{lem:psi-L2}
We have 
$$
\int_E \psi(y)^2 \, dy \leq 2 \int_{\phi(E)} K_f(x) \, d\hm^2(x) 
$$
for every Borel set $E \subset \Omega'$. In particular, $\psi \in \Lloc^2(\Omega')$. 
\end{lemma}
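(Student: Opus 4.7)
The plan is to transfer the integral $\int_E \psi^2 \, dy$ from $\Omega'$ to the parameter domain $U$ of a weakly quasiconformal parametrization $u \colon U \to X$ via the area formula for $h := f \circ u$, bound the resulting integrand pointwise by a constant multiple of $K_f(u(z)) J(\apmd u_z)$, and then return to $X$ through the area formula for $u$ to recover $\int_{\phi(E)} K_f \, d\hm$.

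For the setup, I would take $u$ as in the proof of Theorem~\ref{thm:area-ineq}, so that the John ellipse of $\apmd u_z$ is a disk at a.e.\ $z \in U$, and set $h = f \circ u$. Let $G_0 \subset U$ and $X_0 = u(G_0)$ be the exceptional zero sets from the area formula (Theorem~\ref{thm:area-formula}), and let $\widehat{X} = \{x \in X : N(x, u, U) = 1\}$, which has full $\hm$-measure in $X$. Since $f$ is sense-preserving (being of finite distortion), Remark~\ref{rmk:NN} provides Lusin's condition $(N)$ for $f$, hence $|f(X_0)|_2 = |f(X \setminus \widehat{X})|_2 = 0$. This lets me replace $E$ by $E \cap f(\widehat{X} \setminus X_0)$ without changing either side, and guarantees that for every remaining $y \in E$ the fiber $h^{-1}(y) = u^{-1}(\phi(y))$ is a single point in $U \setminus G_0$.

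Applying the area formula for $h$ with weight $\psi^2 \circ h$ and using $\phi \circ h = u$ yields $\int_E \psi(y)^2 \, dy = \int_{h^{-1}(E)} J(\apmd h_z) / \rho_f^l(u(z))^2 \, dz$. The heart of the proof is the pointwise estimate $J(\apmd h_z)/\rho_f^l(u(z))^2 \leq C\, K_f(u(z)) J(\apmd u_z)$ a.e.\ on $U \setminus G_0$, which I would assemble from (a)~$J(\apmd h_z) \leq 2 L_h(z) l_h(z)$ (Lemma~\ref{lemma:min-max-stretch}), (b)~$L_h(z) \leq \rho_f^u(u(z)) L_u(z)$ (Lemma~\ref{lem:compo}), and (c)~$l_h(z) \leq \rho_f^l(u(z)) L_u(z)$ (Proposition~\ref{prop:mildest}), giving $J(\apmd h_z) \leq 2\rho_f^u(u(z)) \rho_f^l(u(z)) L_u(z)^2$. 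The John-disk condition on $\apmd u_z$ together with the inclusions underlying Lemma~\ref{lemma:min-max-stretch} then yields $L_u(z)^2 \leq 2 J(\apmd u_z)$, and substituting completes the pointwise inequality. The area formula for $u$ with weight $K_f \circ u$, again with multiplicity one on $\phi(E) \subset \widehat{X}$, rewrites the right-hand side as $\int_{\phi(E)} K_f \, d\hm$, concluding the main estimate.

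The main obstacle is the pointwise bound: one must use Proposition~\ref{prop:mildest} to control $l_h$ \emph{from above} by $\rho_f^l \cdot L_u$ --- Lemma~\ref{lem:compo} gives only the opposite inequality $\rho_f^l \cdot l_u \leq l_h$, which cannot close the estimate against $J(\apmd u_z)$. A secondary subtlety is the set $\{\rho_f^l = 0\}$, handled by noting that finite distortion forces $\rho_f^u = 0$ there, so $L_h(z) = 0$ via Lemma~\ref{lem:compo} and both sides vanish. The $\Lloc^2$ conclusion follows from the main inequality together with $K_f \in \Lloc^1(X)$, since $\phi = f^{-1}$ is continuous and hence $\phi(E)$ is compact whenever $E$ is compactly contained in $\Omega'$.
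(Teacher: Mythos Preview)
Your approach is essentially the same as the paper's: pull back via the area formula for $h=f\circ u$, bound pointwise using Lemma~\ref{lem:compo} and Proposition~\ref{prop:mildest}, and push forward via the area formula for $u$ with the John-disk property of $\apmd u_z$. The handling of the exceptional sets and of $\{\rho_f^l=0\}$ is fine.

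There is one small slip that costs you the stated constant. In step~(a) you invoke Lemma~\ref{lemma:min-max-stretch} to get $J(\apmd h_z)\leq 2\,L_h(z)l_h(z)$, which together with $L_u(z)^2\leq 2J(\apmd u_z)$ yields $C=4$ rather than $C=2$. The paper instead uses that $h\colon U\to\R^2$ maps between \emph{euclidean} domains, so at points of approximate differentiability $\apmd h_z(v)=|Dh(z)v|$ and hence
\[
J(\apmd h_z)=|\det Dh(z)|=L_h(z)\,l_h(z)
\]
exactly (the Jacobian of a linear map $\R^2\to\R^2$ equals the product of its singular values). Replacing your inequality~(a) by this identity recovers the constant~$2$. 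A secondary difference is that the paper justifies the area formula for $h$ directly via Condition~(N) (Mal\'y--Martio, since $h$ is monotone and in $N^{1,2}_{\mathrm{loc}}$), while you reduce $E$ to $f(\widehat X\setminus X_0)$; both routes are valid.
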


\begin{proof}
Again, let $u:U \to X$, $U \subset \R^2$, be a weakly $(4/\pi)$-quasiconformal parametrization and $h=f\circ u$. Then $h$ is locally in $N^{1,2}(U,\R^2)$ and monotone. Therefore, $h$ satisfies Condition $(N)$ and consequently the euclidean area formula, see \cite{MalMar95}. Combining the area formula with distortion estimates established in previous sections, we have 

\begin{eqnarray*} 
\int_{E} \psi(y)^2 \, dy &=& 
\int_{h^{-1}(E)} \frac{J(\apmd h_z)}{\rho_f^l(u(z))^2}\, dz 
= \int_{h^{-1}(E)} \frac{L_h(z)\cdot l_h(z)}{\rho_f^l(u(z))^2}\, dz \\
&\leq& \int_{h^{-1}(E)} \frac{\rho_f^u(u(z))\cdot \rho_f^l(u(z))}{\rho_f^l(u(z))^2} L_u(z)^2\, dz \\
&\leq& 2 \int_{h^{-1}(E)} K_f(u(z)) \cdot J(\apmd u_z) \, dz. \end{eqnarray*} 
Here the second equality holds since both the domain and target of $h$ are euclidean domains and the first inequality holds by Lemma \ref{lem:compo} and Proposition \ref{prop:mildest}. The second inequality holds by \eqref{eq:John} and recalling that we can choose $u$ so that the John ellipses of $\apmd u_z$ are disks for almost every $z$. The claim now follows from the area formula for $u$ (Theorem \ref{thm:area-formula}). 
\end{proof}

\begin{lemma}\label{lem:psi-upper-grad}
Suppose $\alpha:X \to \mathbb{R}$ is $1$-Lipschitz. Then $v=\alpha \circ \phi$ 
is absolutely continuous on almost every line parallel to coordinate axes, and $|\partial_j v| \leq \frac{16\sqrt{2}}{\pi} \cdot \psi$ almost everywhere for $j=1,2$. 
\end{lemma}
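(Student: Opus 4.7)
Since $f$ is a continuous bijection that is open by Theorem \ref{thm:main}, $\phi = f^{-1}$ is continuous, and so $v = \alpha \circ \phi$ is continuous on $\Omega'$. The strategy is to establish, for every product $[a, b] \times J$ compactly contained in $\Omega'$, the integrated estimate
\begin{equation*}
\int_J |v(b, t) - v(a, t)| \, dt \leq \frac{16\sqrt{2}}{\pi} \int_{[a,b] \times J} \psi \, dy,
\end{equation*}
and then to derive the pointwise ACL bound by Lebesgue differentiation in $t$. The constant $\frac{16\sqrt{2}}{\pi}$ emerges as the product of the constants $\frac{4}{\pi}$ from the Sobolev coarea inequality (Theorem \ref{thm:coarea-Sobolev}) and $4\sqrt{2}$ from the area inequality (Theorem \ref{thm:area-ineq}); the case $j=2$ will be handled symmetrically.

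For the integrated estimate, consider the Jordan arcs $\sigma_t := \phi([a,b] \times \{t\}) \subset X$. Since $\alpha$ is $1$-Lipschitz and $\sigma_t$ joins $\phi(a,t)$ to $\phi(b,t)$, one has $|v(b,t) - v(a,t)| \leq d_X(\phi(b,t), \phi(a,t)) \leq \mathcal{H}^1(\sigma_t)$. Set $f_2 := \pi_2 \circ f \in \Nloc(X, \mathbb{R})$; since $\pi_2$ is $1$-Lipschitz, the minimal weak upper gradient of $f_2$ is dominated by $\rho_f^u$. The injectivity of $f$ identifies $f_2^{-1}(t) \cap \phi([a,b] \times J) = \sigma_t$ for $t \in J$, which is a non-degenerate arc contained in $\mathcal{A}_{f_2}$. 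Theorem \ref{thm:coarea-Sobolev} applied to $f_2$ with $g = \chi_{\phi([a,b]\times J)}$ then gives
$$\int_J \mathcal{H}^1(\sigma_t) \, dt \leq \frac{4}{\pi} \int_{\phi([a,b] \times J)} \rho_f^u \, d\hm.$$
Rewriting $\rho_f^u = (\rho_f^u \rho_f^l)(\psi \circ f)$, valid $\hm$-a.e.\ because $\rho_f^l = 0$ forces $\rho_f^u = 0$ by the finite distortion inequality, and applying the area inequality (Theorem \ref{thm:area-ineq}) with $g = \psi$ and $E = \phi([a,b] \times J) \setminus X_0$ (for which $N(y, f, E) \leq 1$ by injectivity), converts the right-hand side into $\frac{16\sqrt{2}}{\pi} \int_{[a,b] \times J} \psi \, dy$, completing the integrated estimate.

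To extract the pointwise bound, Lebesgue differentiation in $t$ with $J \to \{t\}$ gives, for each fixed pair $a < b$, that $|v(b, t) - v(a, t)| \leq \frac{16\sqrt{2}}{\pi} \int_a^b \psi(s, t) \, ds$ for almost every $t$ (the LHS converges pointwise by continuity of $v$; the RHS by Lebesgue differentiation of the $L^1$ function $t \mapsto \int_a^b \psi(s,t)\,ds$, using Lemma \ref{lem:psi-L2} and Fubini). Intersecting the exceptional null sets over the countable family of rational pairs $(a, b)$ in the horizontal extent of a fixed rectangle, and extending to real pairs by continuity of $v$ in the first variable combined with $\psi(\cdot, t) \in L^1$ for a.e.\ $t$, yields the inequality for all real $a < b$ simultaneously for a.e.\ $t$. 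This is precisely the absolute continuity of $v$ on almost every horizontal line with $|\partial_1 v| \leq \frac{16\sqrt{2}}{\pi} \psi$ a.e. The main technical task is the clean identification of $\sigma_t$ both as a Jordan arc (for the distance/length bound) and as a non-degenerate level-set component of $f_2$ in $\mathcal{A}_{f_2}$ (for the coarea inequality); injectivity of $f$ makes this immediate. A secondary check is that rewriting $\rho_f^u$ via $\psi \circ f$ is valid on the complement of a null set, handled as noted via the finite distortion inequality.
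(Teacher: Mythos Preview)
Your approach mirrors the paper's (Sobolev coarea inequality + area inequality + Lebesgue differentiation), and the overall strategy is sound. However, there is one genuine gap: when passing from the coarea bound $\frac{4}{\pi}\int_{\phi([a,b]\times J)} \rho_f^u\, d\hm$ to the area inequality applied on $E = \phi([a,b]\times J) \setminus X_0$, you silently discard the contribution from $\phi([a,b]\times J) \cap X_0$. This is not automatic: the set $X_0 = u(G_0)$ can have positive $\hm$-measure (the weakly quasiconformal parametrization $u$ need not satisfy Condition~(N) on the exceptional set $G_0$), and there is no a~priori reason for $\rho_f^u$ to vanish on $X_0$. So the right-hand side of your coarea estimate cannot simply be replaced by the integral over $E$.

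The fix---which the paper carries out explicitly---is to invoke Lemma~\ref{lem:level-set-curves}: since each $\sigma_t$ is a non-degenerate continuum contained in the level set $f_2^{-1}(t)$, that lemma yields $\mathcal H^1(\sigma_t \cap X_0) = 0$ for a.e.\ $t \in J$. You should therefore apply Theorem~\ref{thm:coarea-Sobolev} with $g = \chi_{\phi([a,b]\times J)\setminus X_0}$ rather than $g = \chi_{\phi([a,b]\times J)}$; the left-hand side is then $\int_J \mathcal H^1(\sigma_t \setminus X_0)\,dt$, which equals $\int_J \mathcal H^1(\sigma_t)\,dt$ by Lemma~\ref{lem:level-set-curves}, while the right-hand side is already an integral over $X\setminus X_0$, so Theorem~\ref{thm:area-ineq} applies directly. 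With this adjustment (and the corresponding mention of Lemma~\ref{lem:level-set-curves} among your ``secondary checks''), your argument goes through and is essentially the paper's proof reorganized.
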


\begin{proof}
    It suffices to consider horizontal lines. Fix a square $Q \subset \Omega'$ with sides parallel to coordinate axes. By scaling and translating, we may assume that $Q=[0,1]^2$. 
    
     By Lebesgue's theorem, there exists a set $\Phi\subset (0,1)$ of full measure so that if $s_0\in\Phi$ then
    \begin{equation} \label{eq:sshu} 
    \frac{1}{2\varepsilon}\int_{F_\varepsilon}\psi(y)\,dy=\frac{1}{2\varepsilon}\int_{s_0-\varepsilon}^{s_0+\varepsilon}\int_{t_1}^{t_2}\psi(t,s)\,dt\,ds\to\int_{t_1}^{t_2}\psi(t,s_0)\,dt 
    \end{equation}
    as $\varepsilon\to0$ for every $0\leq t_1<t_2\leq1$, where $F_\varepsilon=[t_1,t_2]\times[s_0-\varepsilon,s_0+\varepsilon]$.

    Fix $s_0 \in \Phi$. The claim now follows from Lemma \ref{lem:psi-L2} if we can show that 
    \begin{equation} \label{eq:phoe}
    |\phi(t_2,s_0)-\phi(t_1,s_0)| \leq \frac{16 \sqrt{2}}{\pi} \int_{t_1}^{t_2} \psi(t,s_0) \, dt  
    \end{equation}
    for every $0 \leq t_1<t_2 \leq 1$. 
    
    Given $0<\varepsilon<\min\{s_0,1-s_0\}$ we set $E_\varepsilon=\phi(F_\varepsilon)$.
    Let $\varphi=\pi_2\circ f|_{E_{\varepsilon}}$, where $\pi_2$ denotes projection to the $s$-axis on the $(t,s)$-plane. By continuity of $\varphi$, Lemma \ref{lem:level-set-curves}, and the Sobolev coarea inequality (Theorem \ref{thm:coarea-Sobolev}) applied to $\varphi$, we have 
    \begin{align*}
        |\phi(t_2,s_0)-\phi(t_1,s_0)|&\leq \delta(\varepsilon)+\frac{1}{2\varepsilon}\int_{s_0-\varepsilon}^{s_0+\varepsilon}\mathcal{H}^1(\varphi^{-1}(s)\setminus X_0)\,ds\\
        &\leq\delta(\varepsilon)+\frac{2}{\pi\varepsilon}\int_{E_\varepsilon \setminus X_0}\frac{\rho_f^u\cdot\rho_f^l}{\rho_f^l}\chi_{\rho_f^l\neq0}\,d\hm^2,  
    \end{align*}
    where $X_0$ is the set in the Area inequality (Theorem \ref{thm:area-ineq}) and $\delta(\varepsilon)\to0$ as $\varepsilon\to0$. 
    Combining with Theorem \ref{thm:area-ineq},  we obtain
    \begin{equation} \label{eq:tua}
    |\phi(t_2,s_0)-\phi(t_1,s_0)|\leq 
    \delta(\varepsilon)+\frac{8\sqrt{2}}{\pi \varepsilon} 
    \int_{F_\varepsilon} \psi(y) \, dy. 
    \end{equation}   
    Now \eqref{eq:phoe} follows by combining \eqref{eq:tua} and \eqref{eq:sshu}. 
\end{proof}

We are ready to prove Theorem \ref{thm:inverse-Sobolev}. Since $\phi$ is continuous, $d_X(\phi(\cdot),x_0) \in \Lloc^2(\Omega')$ for every $x_0 \in X$. By Lemma \ref{lem:psi-L2} and the ACL-characterization of Sobolev functions (see \cite{HKST:15}*{Theorem 6.1.17}), we see that every $v$ in Lemma \ref{lem:psi-upper-grad} belongs to $W^{1,2}_{\text{loc}}(\Omega')$ and satisfies $|\nabla v|\leq \frac{32\psi}{\pi}$ 
almost everywhere. Furthermore, the characterization of Sobolev maps in terms of post-compositions with $1$-Lipschitz functions, i.e., in terms of the functions $v$ above (see \cite{HKST:15}*{Theorem 7.1.20 and Proposition 7.1.36}), shows that $\phi \in N^{1,2}_{\text{loc}}(\Omega',X)$. The proof is complete. \\

\begin{rmk} When $X \subset \mathbb{R}^2$, the $N_{\operatorname{loc}}^{1,2}(X,\R^2)$-regularity assumption in Theorem \ref{thm:inverse-Sobolev} may be replaced with  $f \in N_{\operatorname{loc}}^{1,1}(X,\R^2)$. Moreover, the conclusion on the regularity of $f^{-1}$ is more precise, see \cite{HenKos06}. While our results only concern $N_{\operatorname{loc}}^{1,2}$-maps, it would be interesting to extend the definition of finite distortion to $N_{\operatorname{loc}}^{1,1}$-maps between metric surfaces and develop basic properties including improvements of Theorem \ref{thm:inverse-Sobolev}. One cannot expect the conclusions of Remarks \ref{rem:sense} and \ref{rmk:NN} to hold in the $N^{1,1}$-setting without additional assumptions; maps $f \in N^{1,1}_{\operatorname{loc}}(X,\R^2)$ of finite distortion need not be continuous or satisfy Condition (N) even when $X\subset \R^2$ (see e.g. \cite{HenKos14}). 
\end{rmk}


\section{Reciprocal surfaces}\label{section:reciprocal}

Recall the \emph{geometric definition of quasiconformality}: a homeomorphism $f\colon X\to Y$ is \emph{quasiconformal} if there exists $C\geq 1$ such that 
\begin{equation} \label{eq:geomQC}
C^{-1}\mod f(\Gamma)\leq \mod \Gamma \leq C \mod f(\Gamma) \end{equation} 
for each curve family $\Gamma$ in $X$. 

We say that metric surface $X$ is \textit{reciprocal} if there exists $\kappa>0$ such that for every topological quadrilateral $Q\subset X$ and for the families $\Gamma(Q)$ and $\Gamma^*(Q)$ of curves joining opposite sides of $Q$ we have 
$$
\mod  \Gamma(Q)\cdot \mod\Gamma^*(Q)\leq\kappa. 
$$ 
If $X$ is reciprocal, $x\in X$ and $R>0$ so that  $X\setminus B(x,R)\not=\emptyset$, then by \cite{NR22}*{Theorem 1.8} we have 
\begin{align}\label{eq:reciprocal-1}
 \lim_{r\to 0} \mod\Gamma(B(x,r), X\setminus B(x,R); X) = 0. 
\end{align}
Recall that $\Gamma(E,F;G)$ is the family of curves joining $E$ and $F$ in $\overline{G}$. 

Reciprocal surfaces are the metric surfaces that admit quasiconformal parametrizations by euclidean domains, see \cite{Raj:17}, \cite{Iko:19}, \cite{NR22}. See \cite{Raj:17}, \cite{RR19}, \cite{EBPC21}, \cite{MW21}, \cite{NR:21} and \cite{NR22} for further properties of reciprocal surfaces. 

It is desirable to find non-trivial conditions which imply reciprocality. For instance, one could hope that the existence of maps satisfying the conditions of Theorem \ref{thm:main} forces $X$ to be reciprocal. However, this is not the case. 

\begin{prop}\label{prop:X-not-reciprocal}
    Given an increasing $\phi:[1,\infty) \to [1,\infty)$ so that $\phi(t)\to\infty$ as $t\to\infty$, there is a non-reciprocal metric surface $X$ and a homeomorphism $f\colon X\to\R^2$ so that $f\in \Nloc(X,\R^2)$ and $\phi(K_f)$ is locally integrable.  
\end{prop}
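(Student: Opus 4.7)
The plan is to construct, given $\phi$, a metric surface $X$ homeomorphic to $\overline{\D}$ together with an explicit homeomorphism $f\colon X\to\overline{\D}$. The surface will be a weighted disk whose metric degenerates at the origin in a manner finely tuned to the growth of $\phi$, and $f$ will be a radial-type map. The idea is a direct adaptation of Ball's classical construction: since in the metric setting we may choose both the metric on $X$ and the map $f$, we have enough freedom to force non-reciprocality at the origin while keeping the distortion $\phi$-integrable.

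Concretely, I would take $X=\overline{\D}$ as a topological space and define $d_X$ as the length metric associated with the line element $ds^2=dr^2+\psi(r)^2\,d\theta^2$ in polar coordinates, where $\psi\colon [0,1]\to[0,1]$ is continuous and strictly increasing with $\psi(0)=0$ and $\psi(1)=1$. Then $X$ is topologically a closed disk, $\mathcal H^2_X=\psi(r)\,dr\,d\theta$, and $X$ is a metric surface. Define $f\colon X\to\overline{\D}$ by $f(r,\theta)=(h(r),\theta)$, where $h\colon[0,1]\to[0,1]$ is continuous and strictly increasing with $h(0)=0$ and $h(1)=1$; this $f$ is a homeomorphism. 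A direct computation (using Lemma \ref{lemma:min-max-stretch} applied to a bi-Lipschitz parametrization from a euclidean domain) shows $\rho_f^u=\max(h'(r),h(r)/\psi(r))$ and $\rho_f^l=\min(h'(r),h(r)/\psi(r))$, whence $K_f(r)=h'(r)\psi(r)/h(r)$ on the set where $h'(r)\ge h(r)/\psi(r)$, which the choices below ensure holds near $r=0$.

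Non-reciprocality reduces to a modulus estimate: by Cauchy--Schwarz on radial curves, one computes
\[
\mod\Gamma(B_X(0,r),X\setminus B_X(0,R);X)=\frac{2\pi}{\int_r^R \psi(s)^{-1}\,ds},
\]
which stays uniformly positive as $r\to 0$ provided $\int_0^R\psi(s)^{-1}\,ds<\infty$. Together with \eqref{eq:reciprocal-1} this shows that $X$ is non-reciprocal, and hence $X$ admits no quasiconformal map to $\R^2$. Integrating the distortion against $\mathcal H^2_X$ gives $\int_X\phi(K_f)\,d\hm=2\pi\int_0^1 \phi(K_f(r))\psi(r)\,dr$, and the condition $h(0)=0$ is equivalent to $\int_0^1 K_f(r)/\psi(r)\,dr=\infty$. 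So the task is to find $\psi,h$ with
\begin{enumerate}[label=\normalfont(\arabic*)]
\item $\displaystyle\int_0^1 \psi(r)^{-1}\,dr<\infty$,
\item $\displaystyle\int_0^1 \phi(K_f(r))\,\psi(r)\,dr<\infty$,
\item $\displaystyle\int_0^1 K_f(r)/\psi(r)\,dr=\infty$.
\end{enumerate}

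The main obstacle is that the three conditions pull in opposite directions: (1) forces $\psi$ to vanish slowly at $0$, (3) forces $K_f/\psi$ to be large, while (2) limits how large $K_f$ may be where $\psi$ is not small. The technical heart of the proof is showing that, for any given $\phi$, the three conditions can be arranged simultaneously. I would use the ansatz $\psi(r)=r(\log(1/r))^{\nu}$ and $K_f(r)=\kappa(\log(1/r))$ for suitable $\nu>1$ and a function $\kappa\colon[1,\infty)\to[1,\infty)$ with $\kappa(t)\to\infty$. The substitution $v=\log(1/r)$ converts conditions (1)--(3) to $\int_1^\infty v^{-\nu}\,dv<\infty$, $\int_1^\infty\phi(\kappa(v))\,v^{\nu}e^{-2v}\,dv<\infty$, and $\int_1^\infty \kappa(v)\,v^{-\nu}\,dv=\infty$. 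Because of the factor $e^{-2v}$, condition (2) allows $\kappa$ to grow essentially linearly in $v$, and one can then choose $\nu\in(1,2)$ small enough and $\kappa$ slowly-growing enough (depending on $\phi$) so that all three conditions are simultaneously met; the remaining technical verification of $f\in\Nloc(X,\R^2)$ follows since $f$ is locally Lipschitz away from the origin and the energy $\int\rho_f^{u,2}\,d\hm$ is controlled by the same computation as (2) with $\phi\equiv 1$.
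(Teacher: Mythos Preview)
Your warped-disk construction is a legitimate route, and you have correctly isolated the three competing integrability conditions (1)--(3). However, the specific ansatz $\psi(r)=r(\log(1/r))^{\nu}$ does \emph{not} work for arbitrary $\phi$, so the argument as written has a genuine gap. With this $\psi$, condition (3) reads $\int_1^\infty \kappa(v)\,v^{-\nu}\,dv=\infty$, which for any fixed $\nu>1$ forces $\kappa$ to grow at least like $v^{\nu-1}$ on average. On the other hand condition (2) forces $\phi(\kappa(v))\lesssim e^{2v}$, i.e.\ $\kappa(v)\le \phi^{-1}(Ce^{2v})$. For $\phi(t)=e^{e^t}$ this gives $\kappa(v)\le\log(2v)+O(1)$, whence $\int_1^\infty\kappa(v)\,v^{-\nu}\,dv\le C\int_1^\infty(\log v)\,v^{-\nu}\,dv<\infty$ for \emph{every} $\nu>1$, contradicting (3). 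Thus no choice of $\nu>1$ succeeds for double-exponential $\phi$, and the sentence ``condition (2) allows $\kappa$ to grow essentially linearly in $v$'' is precisely what fails in that regime. (The three conditions \emph{can} be simultaneously met for every $\phi$, but only by letting $\psi$ depend on $\phi$ in a more essential way---iterated logarithms, or a piecewise Abel--Dini type construction---not through a single parameter $\nu$.) Separately, the energy $\int(\rho_f^u)^2\,d\mathcal H^2=2\pi\int_0^1 h'^2\psi\,dr=2\pi\int_0^1 K_f^2h^2/\psi\,dr$ is not ``(2) with $\phi\equiv1$''; it requires its own (not difficult) estimate.

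For comparison, the paper sidesteps this balancing act entirely by using a \emph{conformal} weight rather than a warped product. It keeps Ball's map $f_0$, for which $K_{f_0}(x,y)=1/|x|$ near the segment $I=\{0\}\times[-1,1]$, and rescales the metric on $\R^2$ by $\omega(z)=1/\phi(\dist(z,I)^{-1})$ before collapsing $I$ to a point. Conformality leaves the distortion unchanged, $K_f=K_{f_0}\circ\pi_\omega^{-1}$, while the area element picks up the factor $\omega^2$; hence $\int\phi(K_f)\,d\mathcal H^2=\int\phi(K_{f_0})\,\omega^2\le\int 1/\phi(1/|x|)\,dx\,dy<\infty$ automatically, for \emph{any} $\phi\to\infty$. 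This decoupling of the distortion from the metric is exactly what makes the paper's argument uniform in $\phi$; your approach is repairable, but only by abandoning the rigid ansatz for $\psi$.
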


The map $f_0$ defined in the proof below is known as Ball's map (\cite{Bal81}) and illustrates that the integrability condition in Theorem \ref{thm:main} is sharp.

\begin{proof}
Let $f_0\colon \R^2\to\R^2$ be defined by $f_0(x,y)=(x,\eta(x,y))$, where 
\begin{eqnarray*}
\eta(x,y)=
\left\{ 
\begin{array}{ll}
|x|y, &  0\leq |x| \leq 1, \, 0 \leq |y| \leq 1, \\ 
(2(|y|-1)+|x|(2-|y|))\frac{y}{|y|}, & 0 \leq |x| \leq 1, \, 1 \leq |y| \leq 2, \\ 
y, & \text{otherwise}. 
\end{array}
\right. 
\end{eqnarray*}

Note that $f_0$ is not open and discrete since it maps the segment $I=\{0\}\times [-1,1]$ to the origin. Also, $f_0$ is the identity outside $(-1,1)\times (-2,2)$. 

Calculating the Jacobian matrix shows that $f_0$ is sense-preserving and Lipschitz, 
$K_{f_0}$ is bounded outside $(-1,1)\times (-1,1)$, 
and 
\begin{equation} \label{darke}
K_{f_0}(x,y)=\frac{1}{|x|} \quad \text{for all } (x,y) \in (-1,1)\times (-1,1). 
\end{equation} 
It follows that $K_{f_0}$ is not in $L^1_{\text{loc}}(\R^2)$ but $K_{f_0}\in L^p_{\text{loc}}(\R^2)$ for every $0<p<1$. 

    We change the metric on $\R^2$ to obtain the desired metric surface $X$ and $f\colon X\to\R^2$. Define $\omega:\R^2 \to [0,1]$ by $\omega(z)=1$ when $\dist(z,I)\geq 1$ and by 
\begin{equation} 
\label{eq:ikki}
    \omega(z)=\frac{1}{\phi(\dist(z,I)^{-1})} 
\end{equation} 
    otherwise, where $I=\{0\}\times [-1,1]$. Moreover, let 
    $$
    d_\omega(x,y):=\inf_{\gamma} \int_{\gamma}\omega\, ds, 
    $$
    where the infimum is taken over all rectifiable curves $\gamma$ connecting $x,y\in \R^2$. 
    
    Now $X=(\R^2/I,d_\omega)$ is homeomorphic to $\R^2$ and has locally finite $\mathcal{H}^2$-measure. Let $\pi\colon \R^2\to \R^2/I$ be the projection map, $\id_\omega\colon \R^2/I\to X$ the identity, and $\pi_\omega\colon \R^2\to X$, $\pi_\omega=\id_\omega\circ\pi$. 
    
    Since modulus is conformally invariant and $\omega$ is a conformal change of metric outside $I$, the family of curves joining any non-trivial continuum and the point 
    $p:=\pi_\omega(I)$ in $X$ has positive modulus. By \eqref{eq:reciprocal-1}, it follows that 
    $X$ is non-reciprocal.

 We define $f:X \to \R^2$ by $f:=f_0\circ\pi_\omega^{-1}$. Then $f$ is absolutely continuous on almost every rectifiable curve in $X$, and $\rho_f^u(z) \leq (\omega(z))^{-1} \cdot L$ 
    for almost every $z \in X$, where $L$ is the Lipschitz constant of $f_0$. Therefore, 
    $$
    \int_E (\rho_f^u)^2 \, d \hm^2 \leq L^2 |\pi_\omega^{-1}(E)|_2
    $$
    for every Borel set $E \subset X$. We conclude that 
    $f \in \Nloc(X,\R^2)$. 
    
    It remains to estimate the integral of $\phi(K_f)$. To this end, notice that since $\omega$ is a conformal change of metric, we have 
    $$K_f(z)=K_{f_0}(\pi_\omega^{-1}(z))$$ for almost every $z \in X$. Therefore, it suffices to check that $\phi(K_f)$ is integrable over $E=\pi_\omega((-1,1)\times(-1,1))$. By \eqref{darke} and \eqref{eq:ikki}, we have
    $$
    \int_E \phi(K_f(z)) \,d\hm^2=
    \int_{(-1,1)^2} \phi(K_{f_0}) \cdot \omega^2 \, dx \, dy \leq  
    \int_{(-1,1)^2} \frac{1}{\phi(|x|^{-1})} \, dx \,dy <\infty.  $$
   The proof is complete. 
\end{proof}

We prove in \cite{MR24}*{Theorem 1.3} that if there is a non-constant $f \in \operatorname{FDP}(X,\R^2)$ (not necessarily a homeomorphism) with \emph{bounded} distortion, then $X$ is reciprocal. We also show (see \cite{MR24}*{Corollary 1.2}) that the geometric definition \eqref{eq:geomQC} is quantitatively equivalent with the path definition (requiring $K_f$ to be bounded) of quasiconformality for homeomorphisms $f:X \to \R^2$. By Williams' theorem \cite{Wil:12}, the equivalence between the analytic (requiring $C(x)$ to be bounded in \eqref{ineq:analytic-distortion}) and geometric definitions of quasiconformality for homeomorphisms holds in even greater generality.


\section{Existence of maximal weak lower gradients}\label{section:lower-gradient}

Let $X$ and $Y$ be metric surfaces. We now complete the discussion in Section \ref{section:Sobolev} by proving that each $f\in \Nloc(X,Y)$ has a \emph{maximal weak lower gradient}. Precisely, we claim that there is a weak lower gradient $\rho_f^l$ of $f$ so that if $\rho^l$ is another weak lower gradient of $f$ 
then 
$$
\rho_f^l(x)\geq\rho^l(x) \quad \text{for almost every } x \in X. 
$$ 
Moreover, $\rho^l_f$ is unique up to a set of measure zero. The proof of these facts is analogous to the existence of minimal weak upper gradients, see \cite{HKST:15}*{Theorem 6.3.20}.

First, recall that $f$ is absolutely continuous along almost every curve \cite{HKST:15}*{Lemma 6.3.1}. It follows from \cite{HKST:15}*{Lemma 5.2.16} that if $\rho$ is a weak lower gradient of $f$ and $\sigma\colon X\to[0,\infty]$ is a Borel function such that $\sigma=\rho$ almost everywhere in $X$, then $\sigma$ is a weak lower gradient of $f$. In particular, if $E\subset X$ is Borel and satisfies $\hm^2(E)=0$ then $\rho\chi_{X\setminus E}$ is a weak lower gradient of $u$, compare to \cite{HKST:15}*{Lemma 6.2.8}. We conclude that if there exists a maximal weak lower gradient $\rho_f^l$ of $f$, it has to be unique up to sets of measure zero. 

To prove existence of $\rho^l_f$, we may assume without loss of generality that $\mathcal{H}^2(X)<\infty$. Arguing exactly as in the proof of \cite{HKST:15}*{Lemma 6.3.8}, we can show that if $\sigma,\tau\in L^2(X)$ are weak lower gradients of a map $f\colon X\to Y$ that is absolutely continuous along almost every curve in $X$ and if $E$ is a measurable subset of $X$ then the function 
    $$\rho=\sigma\cdot\chi_E+\tau\cdot\chi_{X\setminus E}$$
is a weak lower gradient of $f$. Now, by choosing $E=\{x\in X:\sigma>\tau\}$, it follows that $\rho\colon X\to[0,\infty]$ defined by
        $$\rho(x)=\max\{\sigma(x),\tau(x)\}$$
is a 2-integrable weak lower gradient of $f$. After applying Fuglede's lemma, see e.g.\ \cite{HKST:15}*{Section 5.1}, we established the following lemma. 

\begin{lemma}\label{lem:max-is-lower-gradient}
    If $f\colon X\to Y$ is absolutely continuous along almost every curve, then the collection $\mathcal{L}$ of $2$-integrable weak lower gradients of $f$ is closed under pointwise maximum operations.
\end{lemma}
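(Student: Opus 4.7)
My plan is to prove directly that $\max\{\sigma,\tau\}$ is a weak lower gradient whenever $\sigma,\tau$ are; the lemma then follows. Unlike the upper gradient analogue in \cite{HKST:15}*{Lemma 6.3.8}, which requires a genuine gluing over a measurable set $E\subset X$, the lower gradient version collapses to a pointwise comparison of metric derivatives along almost every curve, a comparison made available precisely by the absolute continuity hypothesis.

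The first step is to pass from the modulus-zero exceptional families $\Gamma_\sigma,\Gamma_\tau$, on which the weak lower gradient inequalities for $\sigma,\tau$ may fail, to their subcurve closures $\Gamma_\sigma^*,\Gamma_\tau^*$, consisting of rectifiable curves that contain some subcurve in $\Gamma_\sigma$ or $\Gamma_\tau$ respectively. A one-line admissibility check shows $\mod \Gamma_\sigma^* = \mod \Gamma_\tau^* = 0$: if $g$ is admissible for $\Gamma_\sigma$ and $\gamma\in\Gamma_\sigma^*$ has subcurve $\gamma_0\in\Gamma_\sigma$, then $\int_\gamma g\,ds\geq\int_{\gamma_0}g\,ds\geq 1$. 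Consequently every curve outside $\Gamma_\sigma^*\cup\Gamma_\tau^*$ satisfies the lower gradient inequalities for $\sigma$ and $\tau$ on \emph{every} subcurve.

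Next, I would fix a rectifiable $\gamma\colon[0,\ell(\gamma)]\to X$ parametrized by arclength that avoids $\Gamma_\sigma^*\cup\Gamma_\tau^*$, along which $f$ is absolutely continuous, and for which $t\mapsto\sigma(\gamma(t)),\tau(\gamma(t))$ are Lebesgue integrable by Fuglede's lemma; the complement of the family of such $\gamma$ has modulus zero. Writing $m(t):=\apmd(f\circ\gamma)_t$, Lemma \ref{lemma:length_cov} combined with absolute continuity yields $\ell(f\circ\gamma|_{[a,b]})=\int_a^b m(t)\,dt$ for every subinterval. The subcurve lower gradient inequality for $\sigma$ then gives $\int_a^b m(t)\,dt\geq\int_a^b\sigma(\gamma(t))\,dt$ for all $0\leq a\leq b\leq\ell(\gamma)$, so Lebesgue differentiation produces the pointwise bound $m(t)\geq\sigma(\gamma(t))$ for almost every $t$, and analogously $m(t)\geq\tau(\gamma(t))$ a.e. Taking the pointwise maximum and integrating,
$$\ell(f\circ\gamma)=\int_0^{\ell(\gamma)}m(t)\,dt\geq\int_\gamma\max\{\sigma,\tau\}\,ds,$$
so $\max\{\sigma,\tau\}$ is a weak lower gradient of $f$. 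Its $2$-integrability is immediate from $\max\{\sigma,\tau\}\leq\sigma+\tau$.

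The only nontrivial point is the subcurve-closure observation $\mod \Gamma_\sigma^*=0$, which is the lower gradient analogue of the gluing step in the upper gradient proof; here it is dispatched in a single line by the admissibility argument, after which absolute continuity does all the remaining work by upgrading the integral subcurve inequality to a pointwise bound on $m(t)$, and no partition of $X$ into a set $E$ and its complement is needed.
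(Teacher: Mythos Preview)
Your proof is correct and is essentially the same argument the paper invokes. The paper proves the gluing formula $\sigma\chi_E+\tau\chi_{X\setminus E}$ by following the proof of \cite{HKST:15}*{Lemma 6.3.8} verbatim and then specializes to $E=\{\sigma>\tau\}$; but that HKST proof already proceeds via the subcurve closure of the exceptional family together with the pointwise bound on the metric speed of $f\circ\gamma$ and Lebesgue differentiation, exactly as you do, so your ``direct'' route to $\max\{\sigma,\tau\}$ is the same mechanism with the intermediate generality of an arbitrary $E$ omitted rather than a genuinely different argument. One small remark: you cite Lemma~\ref{lemma:length_cov} for the identity $\ell(f\circ\gamma|_{[a,b]})=\int_a^b m(t)\,dt$, but that lemma assumes $f\in\Nloc(X,Y)$ whereas here only absolute continuity along almost every curve is hypothesized; the identity you need is simply the classical fact that an absolutely continuous curve has finite length equal to the integral of its metric derivative, which is available directly.
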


Let $(\rho_i)\subset\mathcal{L}$ be a sequence such that
    $$\lim_{i\to\infty}||\rho_i||_{L^2}=\sup\{||\rho||_{L^2}:\rho\in\mathcal{L}\}.$$
    By Lemma \ref{lem:max-is-lower-gradient}, the sequence $(\rho_i')$ given by $\rho_i'(x)=\max_{1\leq j\leq i}\rho_j(x)$ is in $\mathcal{L}$. Note that $(\rho_i')$ is pointwise increasing. The limit function
    $$\rho_f^l:=\lim_{i\to\infty}\rho_i'$$
    is Borel by \cite{HKST:15}*{Proposition 3.3.22}. The monotone convergence theorem implies that $\rho_i'\to\rho_f^l$ in $L^2(X)$ and by Fuglede's lemma $\rho_f^l\in\mathcal{L}$, see e.g.\ \cite{HKST:15}*{Section 5.1}. By construction, $\rho_f^l$ is a maximal weak lower gradient of $f$. The proof is complete.

\begin{bibdiv}
\begin{biblist}

\bib{AIM09}{book}{
      author={Astala, Kari},
      author={Iwaniec, Tadeusz},
      author={Martin, Gaven},
       title={Elliptic partial differential equations and quasiconformal
  mappings in the plane},
      series={Princeton Mathematical Series},
   publisher={Princeton University Press, Princeton, NJ},
        date={2009},
      volume={48},
        ISBN={978-0-691-13777-3},
      review={\MR{2472875}},
}

\bib{Bal81}{article}{
      author={Ball, J.~M.},
       title={Global invertibility of {S}obolev functions and the
  interpenetration of matter},
        date={1981},
        ISSN={0308-2105,1473-7124},
     journal={Proc. Roy. Soc. Edinburgh Sect. A},
      volume={88},
      number={3-4},
       pages={315\ndash 328},
         url={https://doi.org/10.1017/S030821050002014X},
      review={\MR{616782}},
}

\bib{Bal97}{incollection}{
      author={Ball, Keith},
       title={An elementary introduction to modern convex geometry},
        date={1997},
   booktitle={Flavors of geometry},
      series={Math. Sci. Res. Inst. Publ.},
      volume={31},
   publisher={Cambridge Univ. Press, Cambridge},
       pages={1\ndash 58},
}

\bib{BonKle02}{article}{
      author={Bonk, Mario},
      author={Kleiner, Bruce},
       title={Quasisymmetric parametrizations of two-dimensional metric
  spheres},
        date={2002},
        ISSN={0020-9910},
     journal={Invent. Math.},
      volume={150},
      number={1},
       pages={127\ndash 183},
         url={https://doi.org/10.1007/s00222-002-0233-z},
      review={\MR{1930885}},
}

\bib{EBPC21}{article}{
      author={Eriksson-Bique, Sylvester},
      author={Poggi-Corradini, Pietro},
       title={On the sharp lower bound for duality of modulus},
        date={2022},
        ISSN={0002-9939},
     journal={Proc. Amer. Math. Soc.},
      volume={150},
      number={7},
       pages={2955\ndash 2968},
         url={https://doi.org/10.1090/proc/15951},
      review={\MR{4428881}},
}

\bib{EG92}{book}{
      author={Evans, Lawrence~C.},
      author={Gariepy, Ronald~F.},
       title={Measure theory and fine properties of functions},
      series={Studies in Advanced Mathematics},
   publisher={CRC Press, Boca Raton, FL},
        date={1992},
}

\bib{EH21}{article}{
      author={Esmayli, Behnam},
      author={Haj\l~asz, Piotr},
       title={The coarea inequality},
        date={2021},
        ISSN={2737-0690,2737-114X},
     journal={Ann. Fenn. Math.},
      volume={46},
      number={2},
       pages={965\ndash 991},
         url={https://doi.org/10.5186/aasfm.2021.4654},
      review={\MR{4307012}},
}

\bib{EIR22}{article}{
      author={Esmayli, Behnam},
      author={Ikonen, Toni},
      author={Rajala, Kai},
       title={Coarea inequality for monotone functions on metric surfaces},
        date={2023},
        ISSN={0002-9947,1088-6850},
     journal={Trans. Amer. Math. Soc.},
      volume={376},
      number={10},
       pages={7377\ndash 7406},
         url={https://doi.org/10.1090/tran/8998},
      review={\MR{4636694}},
}

\bib{HenKos06}{article}{
      author={Hencl, Stanislav},
      author={Koskela, Pekka},
       title={Regularity of the inverse of a planar {S}obolev homeomorphism},
        date={2006},
        ISSN={0003-9527},
     journal={Arch. Ration. Mech. Anal.},
      volume={180},
      number={1},
       pages={75\ndash 95},
         url={https://doi.org/10.1007/s00205-005-0394-1},
      review={\MR{2211707}},
}

\bib{HeiKei11}{article}{
      author={Heinonen, Juha},
      author={Keith, Stephen},
       title={Flat forms, bi-{L}ipschitz parameterizations, and smoothability
  of manifolds},
        date={2011},
        ISSN={0073-8301},
     journal={Publ. Math. Inst. Hautes \'{E}tudes Sci.},
      number={113},
       pages={1\ndash 37},
         url={https://doi.org/10.1007/s10240-011-0032-4},
      review={\MR{2805596}},
}

\bib{HenKos14}{book}{
      author={Hencl, Stanislav},
      author={Koskela, Pekka},
       title={Lectures on mappings of finite distortion},
      series={Lecture Notes in Mathematics},
   publisher={Springer, Cham},
        date={2014},
      volume={2096},
        ISBN={978-3-319-03172-9; 978-3-319-03173-6},
         url={https://doi.org/10.1007/978-3-319-03173-6},
      review={\MR{3184742}},
}

\bib{HKST:15}{book}{
      author={Heinonen, Juha},
      author={Koskela, Pekka},
      author={Shanmugalingam, Nageswari},
      author={Tyson, Jeremy~T.},
       title={Sobolev spaces on metric measure spaces: An approach based on
  upper gradients},
      series={New Mathematical Monographs},
   publisher={Cambridge University Press, Cambridge},
        date={2015},
      volume={27},
}

\bib{HeiRic02}{article}{
      author={Heinonen, Juha},
      author={Rickman, Seppo},
       title={Geometric branched covers between generalized manifolds},
        date={2002},
        ISSN={0012-7094},
     journal={Duke Math. J.},
      volume={113},
      number={3},
       pages={465\ndash 529},
         url={https://doi.org/10.1215/S0012-7094-02-11333-7},
      review={\MR{1909607}},
}

\bib{HenRaj13}{article}{
      author={Hencl, Stanislav},
      author={Rajala, Kai},
       title={Optimal assumptions for discreteness},
        date={2013},
        ISSN={0003-9527,1432-0673},
     journal={Arch. Ration. Mech. Anal.},
      volume={207},
      number={3},
       pages={775\ndash 783},
         url={https://doi.org/10.1007/s00205-012-0574-8},
      review={\MR{3017286}},
}

\bib{HeiSul02}{article}{
      author={Heinonen, Juha},
      author={Sullivan, Dennis},
       title={On the locally branched {E}uclidean metric gauge},
        date={2002},
        ISSN={0012-7094},
     journal={Duke Math. J.},
      volume={114},
      number={1},
       pages={15\ndash 41},
         url={https://doi.org/10.1215/S0012-7094-02-11412-4},
      review={\MR{1915034}},
}

\bib{IKO01}{article}{
      author={Iwaniec, Tadeusz},
      author={Koskela, Pekka},
      author={Onninen, Jani},
       title={Mappings of finite distortion: monotonicity and continuity},
        date={2001},
        ISSN={0020-9910},
     journal={Invent. Math.},
      volume={144},
      number={3},
       pages={507\ndash 531},
         url={https://doi.org/10.1007/s002220100130},
      review={\MR{1833892}},
}

\bib{Iko:19}{article}{
      author={Ikonen, Toni},
       title={Uniformization of metric surfaces using isothermal coordinates},
        date={2022},
     journal={Ann. Fenn. Math.},
      volume={47},
      number={1},
       pages={155\ndash 180},
}

\bib{IwaMar01}{book}{
      author={Iwaniec, Tadeusz},
      author={Martin, Gaven},
       title={Geometric function theory and non-linear analysis},
      series={Oxford Mathematical Monographs},
   publisher={The Clarendon Press, Oxford University Press, New York},
        date={2001},
        ISBN={0-19-850929-4},
      review={\MR{1859913}},
}

\bib{IwaSve93}{article}{
      author={Iwaniec, Tadeusz},
      author={\v{S}ver\'{a}k, Vladim\'{\i}r},
       title={On mappings with integrable dilatation},
        date={1993},
        ISSN={0002-9939,1088-6826},
     journal={Proc. Amer. Math. Soc.},
      volume={118},
      number={1},
       pages={181\ndash 188},
         url={https://doi.org/10.2307/2160025},
      review={\MR{1160301}},
}

\bib{Kar07}{article}{
      author={Karmanova, M.~B.},
       title={Area and co-area formulas for mappings of the {S}obolev classes
  with values in a metric space},
        date={2007},
     journal={Sibirsk. Mat. Zh.},
      volume={48},
      number={4},
       pages={778\ndash 788},
}

\bib{Kir16}{article}{
      author={Kirsil\"{a}, Ville},
       title={Integration by parts on generalized manifolds and applications on
  quasiregular maps},
        date={2016},
        ISSN={1239-629X},
     journal={Ann. Acad. Sci. Fenn. Math.},
      volume={41},
      number={1},
       pages={321\ndash 341},
         url={https://doi.org/10.5186/aasfm.2016.4123},
      review={\MR{3467715}},
}

\bib{KaKoMa01}{article}{
      author={Kauhanen, Janne},
      author={Koskela, Pekka},
      author={Mal\'{y}, Jan},
       title={Mappings of finite distortion: discreteness and openness},
        date={2001},
        ISSN={0003-9527},
     journal={Arch. Ration. Mech. Anal.},
      volume={160},
      number={2},
       pages={135\ndash 151},
         url={https://doi.org/10.1007/s002050100162},
      review={\MR{1864838}},
}

\bib{KKMOZ03}{article}{
      author={Kauhanen, Janne},
      author={Koskela, Pekka},
      author={Mal\'{y}, Jan},
      author={Onninen, Jani},
      author={Zhong, Xiao},
       title={Mappings of finite distortion: sharp {O}rlicz-conditions},
        date={2003},
        ISSN={0213-2230},
     journal={Rev. Mat. Iberoamericana},
      volume={19},
      number={3},
       pages={857\ndash 872},
         url={https://doi.org/10.4171/RMI/372},
      review={\MR{2053566}},
}

\bib{LP20}{article}{
      author={Luisto, Rami},
      author={Pankka, Pekka},
       title={Sto\"{\i}low's theorem revisited},
        date={2020},
        ISSN={0723-0869,1878-0792},
     journal={Expo. Math.},
      volume={38},
      number={3},
       pages={303\ndash 318},
         url={https://doi.org/10.1016/j.exmath.2019.04.002},
      review={\MR{4149174}},
}

\bib{LWarea}{article}{
      author={Lytchak, Alexander},
      author={Wenger, Stefan},
       title={Area minimizing discs in metric spaces},
        date={2017},
     journal={Arch. Ration. Mech. Anal.},
      volume={223},
      number={3},
       pages={1123\ndash 1182},
}

\bib{LWintrinsic}{article}{
      author={Lytchak, Alexander},
      author={Wenger, Stefan},
       title={Intrinsic structure of minimal discs in metric spaces},
        date={2018},
     journal={Geom. Topol.},
      volume={22},
      number={1},
       pages={591\ndash 644},
}

\bib{Mei22}{article}{
      author={Meier, Damaris},
       title={Quasiconformal uniformization of metric surfaces of higher
  topology},
        date={2024},
     journal={Indiana Univ. Math. J.},
        note={To appear},
}

\bib{MalMar95}{article}{
      author={Mal\'{y}, Jan},
      author={Martio, Olli},
       title={Lusin's condition ({N}) and mappings of the class {$W^{1,n}$}},
        date={1995},
        ISSN={0075-4102,1435-5345},
     journal={J. Reine Angew. Math.},
      volume={458},
       pages={19\ndash 36},
         url={https://doi.org/10.1515/crll.1995.458.19},
      review={\MR{1310951}},
}

\bib{MN23}{article}{
      author={Meier, Damaris},
      author={Ntalampekos, Dimitrios},
       title={Lipschitz-volume rigidity and {S}obolev coarea inequality for
  metric surfaces},
        date={2024},
        ISSN={1050-6926,1559-002X},
     journal={J. Geom. Anal.},
      volume={34},
      number={5},
       pages={Paper No. 128, 30},
         url={https://doi.org/10.1007/s12220-024-01577-x},
      review={\MR{4718625}},
}

\bib{MR24}{article}{
      author={Meier, Damaris},
      author={Rajala, Kai},
       title={Definitions of quasiconformality on metric surfaces},
        date={2024},
       pages={preprint arXiv:2405.07476},
}

\bib{MW21}{article}{
      author={Meier, Damaris},
      author={Wenger, Stefan},
       title={Quasiconformal almost parametrizations of metric surfaces},
        date={2024},
     journal={J. Eur. Math. Soc.},
        note={published online first},
}

\bib{NR22}{article}{
      author={Ntalampekos, Dimitrios},
      author={Romney, Matthew},
       title={Polyhedral approximation and uniformization for non-length
  surfaces},
        date={2022},
       pages={preprint arXiv:2206.01128},
}

\bib{NR:21}{article}{
      author={Ntalampekos, Dimitrios},
      author={Romney, Matthew},
       title={Polyhedral approximation of metric surfaces and applications to
  uniformization},
        date={2023},
        ISSN={0012-7094,1547-7398},
     journal={Duke Math. J.},
      volume={172},
      number={9},
       pages={1673\ndash 1734},
         url={https://doi.org/10.1215/00127094-2022-0061},
      review={\MR{4608329}},
}

\bib{OZ08}{article}{
      author={Onninen, Jani},
      author={Zhong, Xiao},
       title={Mappings of finite distortion: a new proof for discreteness and
  openness},
        date={2008},
        ISSN={0308-2105},
     journal={Proc. Roy. Soc. Edinburgh Sect. A},
      volume={138},
      number={5},
       pages={1097\ndash 1102},
         url={https://doi.org/10.1017/S0308210506000825},
      review={\MR{2477453}},
}

\bib{Raj10}{article}{
      author={Rajala, Kai},
       title={Remarks on the {I}waniec-\v{S}ver\'{a}k conjecture},
        date={2010},
        ISSN={0022-2518},
     journal={Indiana Univ. Math. J.},
      volume={59},
      number={6},
       pages={2027\ndash 2039},
         url={https://doi.org/10.1512/iumj.2010.59.3946},
      review={\MR{2919747}},
}

\bib{Raj:17}{article}{
      author={Rajala, Kai},
       title={Uniformization of two-dimensional metric surfaces},
        date={2017},
     journal={Invent. Math.},
      volume={207},
      number={3},
       pages={1301\ndash 1375},
}

\bib{Res67}{article}{
      author={Re\v{s}etnjak, Ju.~G.},
       title={Spatial mappings with bounded distortion},
        date={1967},
        ISSN={0037-4474},
     journal={Sibirsk. Mat. \v{Z}.},
      volume={8},
       pages={629\ndash 658},
      review={\MR{215990}},
}

\bib{Ric93}{book}{
      author={Rickman, Seppo},
       title={Quasiregular mappings},
      series={Ergebnisse der Mathematik und ihrer Grenzgebiete (3) [Results in
  Mathematics and Related Areas (3)]},
   publisher={Springer-Verlag, Berlin},
        date={1993},
      volume={26},
        ISBN={3-540-56648-1},
         url={https://doi.org/10.1007/978-3-642-78201-5},
      review={\MR{1238941}},
}

\bib{RR19}{article}{
      author={Rajala, Kai},
      author={Romney, Matthew},
       title={Reciprocal lower bound on modulus of curve families in metric
  surfaces},
        date={2019},
        ISSN={1239-629X},
     journal={Ann. Acad. Sci. Fenn. Math.},
      volume={44},
      number={2},
       pages={681\ndash 692},
         url={https://doi.org/10.5186/aasfm.2019.4442},
      review={\MR{3973535}},
}

\bib{TY62}{article}{
      author={Titus, C.~J.},
      author={Young, G.~S.},
       title={The extension of interiority, with some applications},
        date={1962},
        ISSN={0002-9947},
     journal={Trans. Amer. Math. Soc.},
      volume={103},
       pages={329\ndash 340},
         url={https://doi.org/10.2307/1993663},
      review={\MR{137103}},
}

\bib{VG76}{article}{
      author={Vodop'yanov, S.~K.},
      author={Gol'dshtei, V.~M.},
       title={Quasiconformal mappings, and spaces of functions with first
  generalized derivatives},
        date={1976},
        ISSN={0037-4474},
     journal={Sibirsk. Mat. \v{Z}.},
      volume={17},
      number={3},
       pages={515\ndash 531, 715},
      review={\MR{414869}},
}

\bib{ManVil98}{article}{
      author={Villamor, Enrique},
      author={Manfredi, Juan~J.},
       title={An extension of {R}eshetnyak's theorem},
        date={1998},
        ISSN={0022-2518},
     journal={Indiana Univ. Math. J.},
      volume={47},
      number={3},
       pages={1131\ndash 1145},
         url={https://doi.org/10.1512/iumj.1998.47.1323},
      review={\MR{1665761}},
}

\bib{Wil:12}{article}{
      author={Williams, Marshall},
       title={Geometric and analytic quasiconformality in metric measure
  spaces},
        date={2012},
     journal={Proc. Amer. Math. Soc.},
      volume={140},
      number={4},
       pages={1251\ndash 1266},
}

\end{biblist}
\end{bibdiv}

\end{document}